\UseRawInputEncoding
\documentclass[12pt,reqno]{amsart}
\usepackage{amsmath,amsfonts,amsthm,amsopn,amssymb}
\usepackage{cite,marginnote}
\usepackage{color,enumitem,graphicx}
\usepackage[colorlinks=true,urlcolor=blue,
citecolor=red,linkcolor=blue,linktocpage,pdfpagelabels,
bookmarksnumbered,bookmarksopen]{hyperref}
\usepackage[english]{babel}

\usepackage[left=2.9cm,right=2.9cm,top=2.8cm,bottom=2.8cm]{geometry}
\usepackage[hyperpageref]{backref}

\allowdisplaybreaks
\numberwithin{equation}{section}

\makeindex
\makeindex

\newenvironment{key words}{\textbf{Keywords}\mbox{  }}{ }

\newtheorem{theorem}{Theorem}[section]
\newtheorem{definition}[theorem]{Definition}
\newtheorem{lemma}[theorem]{Lemma}
\newtheorem{corollary}[theorem]{Corollary}
\newtheorem{proposition}[theorem]{Proposition}
\renewenvironment{proof}{\noindent{\textbf{Proof.}}}{\hfill$\Box$}
\newtheorem{remark}[theorem]{Remark}

\newcommand{\ud}{\mathrm{d}}

\newcommand{\s}{\section}

\newcommand{\R}{\mathbb R}

\newcommand{\C}{\mathbb C}

\newcommand{\lab}{\label}
\newcommand{\bt}{\begin{theorem}}
\newcommand{\et}{\end{theorem}}
\newcommand{\bl}{\begin{lemma}}
\newcommand{\el}{\end{lemma}}
\newcommand{\bd}{\begin{definition}}
\newcommand{\ed}{\end{definition}}
\newcommand{\bc}{\begin{corollary}}
\newcommand{\ec}{\end{corollary}}
\newcommand{\bp}{\begin{proof}~}
\newcommand{\ep}{\end{proof}}
\newcommand{\bx}{\begin{example}}
\newcommand{\ex}{\end{example}}
\newcommand{\bi}{\begin{exercise}}
\newcommand{\ei}{\end{exercise}}
\newcommand{\bo}{\begin{proposition}}
\newcommand{\eo}{\end{proposition}}
\newcommand{\br}{\begin{remark}}
\newcommand{\er}{\end{remark}}
\newcommand{\beq}{\begin{equation}}
\newcommand{\eeq}{\end{equation}}
\newcommand{\ba}{\begin{align}}
\newcommand{\ea}{\end{align}}
\newcommand{\bn}{\begin{enumerate}}
\newcommand{\en}{\end{enumerate}}
\newcommand{\bg}{\begin{align*}}
\newcommand{\bcs}{\begin{cases}}
\newcommand{\ecs}{\end{cases}}

\newcommand{\bean}{\begin{eqnarray*}}
\newcommand{\eean}{\end{eqnarray*}}

\def\C{\mathbb{C}}
\def\N{\mathbb{N}}

\def\R{\mathbb{R}}

\def\bd{\mathrm{bd}\,}

\title[Multiple positive normalized solutions]{Multiple positive solutions with prescribed masses for a coupled Schr\"odinger system: mass mixed and Sobolev critical coupled case}

\author[Q.~Guo]{Qing Guo}
\author[Q.~H.~He]{Qihan He}
\author[W.~Shuai]{Wei Shuai}
\author[X.~X.~Zhong]{Xuexiu Zhong}

\address[Q.~Guo]{\newline\indent College of Science
\newline\indent
Minzu University of China
\newline\indent
Beijing, 100081, PR China}
\email{\href{mailto:guoqing0117@163.com}{guoqing0117@163.com}}

\address[Q.~H.~He]{\newline\indent College of Mathematics and Information Science
\newline\indent
Guangxi University,
\newline\indent
Nanning, 530003,PR China.}
\email{\href{mailto:heqihan277@gxu.edu.cn}{heqihan277@gxu.edu.cn}}

\address[W.~Shuai]{\newline\indent School of Mathematics and Statistics \& Hubei Key Laboratory of Mathematical Sciences
\newline\indent
Central China Normal University
\newline\indent
Wuhan 430079, P. R. China}
\email{\href{mailto:wshuai@mail.ccnu.edu.cn}{wshuai@ccnu.edu.cn}}

\address[X.~X.~Zhong]{\newline\indent South China Research Center for Applied Mathematics and Interdisciplinary Studies \& School of Mathematical Sciences
\newline\indent
South China Normal University
\newline\indent
Guangzhou 510631, PR China}
\email{\href{mailto:zhongxuexiu1989@163.com}{zhongxuexiu1989@163.com}}

\thanks{The research is partially supported by the NSFC (Nos.12271184,12271539, 12061012 and 12071170), Young Top-notch Talent Project of Guangdong Province (2024TQ08A725), Guangzhou Basic and Applied Basic Research Foundation(2024A04J10001).}

\begin{document}

\begin{abstract}
The aim of this paper is to establish multiple positive normalized solutions $(u,v,\lambda_1,\lambda_2)\in H^1(\mathbb{R}^N,\mathbb{R}^2)\times \mathbb{R}^2$ to the following coupled Schr\"odinger system involving Sobolev critical exponent:
$$
\begin{cases}
-\Delta u+\lambda_1 u=\mu_1|u|^{p-2}u+\nu\alpha|u|^{\alpha-2}u|v|^\beta, x\in \mathbb{R}^N,\\
-\Delta v+\lambda_2 v=\mu_2|v|^{q-2}v+\nu\beta|v|^{\beta-2}v|u|^\alpha, x\in \mathbb{R}^N,\\
\int_{\mathbb{R}^N}|u|^2\mathrm{d}x=a, \int_{\mathbb{R}^N}|v|^2\mathrm{d}x=b,
\end{cases}  N\geq 3,
$$
where $\mu_1,\mu_2, \nu, a, b>0$. We are particularly interested in the mass mixed case that $2<p, q<2+\frac{4}{N}, \alpha>1, \beta>1$, and $\alpha+\beta=2^*:=\frac{2N}{N-2}$. For sufficiently small $\nu>0$, we demonstrate that the above system admits two positive solutions, one of which serves as a local minimizer, and the other as a mountain pass solution. By developing some new technical lemmas on the interaction estimates, we are managed to resolves Soave's open problem [{\it J. Funct. Anal.}, 2020, Remark 1.1] within the context of the system case. Notably, our existence result holds true for all dimensions $N\geq 3$. Our results also significantly extend the result of Gou and Jeanjean [{\it Nonlinearity}, 2018, Theorem 1.1] to the Sobolev critical coupled case and removing the hypothesis ``either $p,q\leq \alpha+\beta-\frac{2}{N}$ or $|p-q|\leq \frac{2}{N}$" for $N\geq 5$. Additionally, we also establish a sequence of properties for the local minimizer, including local uniqueness, continuity with respect to the small parameter $\nu$, and the limiting profiles for $\nu\rightarrow 0^+$.
\medskip

\begin{flushleft}
{\sc Keywords:}\ \ Nonlinear Schr\"odinger system; Normalized solution; Sobolev critical exponent; Interaction estimates; Mass mixed case; Soave's open problem; Br\'ezis-Nirenberg problem.
\end{flushleft}
\smallskip

\begin{flushleft}
{\sc 2020 Mathematics Subject Classification:}\ \ 35Q55, 35J50, 35B33.
\end{flushleft}
\end{abstract}

\maketitle

\indent

\section{Introduction and main results}
\subsection{Background and the motivation}
The time-dependent system of coupled nonlinear Schr\"odinger equations
\beq\lab{eq:T-coupled-system}
\begin{cases}
-i\frac{\partial}{\partial t}\Phi_1=\Delta \Phi_1+g_1(|\Phi_1|^2)\Phi_1+\partial_1\varphi(|\Phi_1|^2,|\Phi_2|^2)\Phi_1,\\
-i\frac{\partial}{\partial t}\Phi_2=\Delta \Phi_2+g_2(|\Phi_2|^2)\Phi_2+\partial_2\varphi(|\Phi_1|^2,|\Phi_2|^2)\Phi_2,\\
\Phi_j=\Phi_j(x,t)\in \C, j=1,2,N\geq 1,
\end{cases}\quad (x,t)\in \R^N\times \R,
\eeq
is used as model for various physical phenomena, for instance binary mixtures of Bose-Einstein condensates, or the propagation of mutually incoherent wave packets in nonlinear optics; see  e.g.\ \cite{AkhmedievAnkiewicz.1999, Esry1998, Frantzeskakis2010, Timmermans1998}.
The ansatz $\Phi_j(x,t)=e^{i\lambda_jt}u_j(x), j=1,2$ for solitary wave solutions leads to the elliptic system
\begin{equation*}
\begin{cases}
-\Delta u_1+\lambda_1u_1=f_1(u_1)+\partial_1 H(u_1, u_2),\\
-\Delta u_2+\lambda_2 u_2=f_2(u_2)+\partial_2 H(u_1,u_2),
\end{cases}\quad \hbox{in}\;\R^N
\end{equation*}
with $f_j(u_j)=g_j(|u_j|^2)u_j, j=1,2$ and $H(u_1,u_2)=\frac{1}{2}\varphi(|u_1|^2, |u_2|^2)$.
Since the masses
$$\int_{\R^N}|\Phi_j|^2\ud x, j=1,2$$
are preserved along trajectories of \eqref{eq:T-coupled-system}, it is natural to consider them as prescribed.

For ease of discussion, we focus on the following specific case:
\beq\lab{eq:20201206-e1}
\begin{cases}
-\Delta u+\lambda_1 u=\mu_1|u|^{p-2}u+\nu\alpha|u|^{\alpha-2}u|v|^\beta, \quad &\hbox{in}~\R^N,\\
-\Delta v+\lambda_2 v=\mu_2|v|^{q-2}v+\nu\beta|v|^{\beta-2}v|u|^\alpha, \quad &\hbox{in}~\R^N,
\end{cases}
\eeq
with prescribed masses
\beq\lab{eq:norm}
\int_{\R^N}|u|^2 \ud x=a\quad\hbox{and}\;\int_{\R^N}|v|^2 \ud x=b.
\eeq
A natural approach to find solutions of \eqref{eq:20201206-e1} that satisfy the normalization constraints \eqref{eq:norm} is to seek critical points $(u,v)\in H^1(\mathbb{R}^N,\mathbb{R}^2)$ of the energy functional:
$$
  J(u,v) = \frac{1}{2}\int_{\R^N}\left(|\nabla u|^2+|\nabla v|^2\right)\ud x
  -\int_{\R^N}\left(\frac{1}{p}\mu_1|u|^p+\frac{1}{q}\mu_2|v|^q+\nu |u|^\alpha |v|^\beta\right)\ud x
$$
subject to the constraint on the $L^2$-torus:
\beq\label{T}
T(a,b):=\left\{(u,v)\in H^1(\R^N,\R^2): \int_{\R^N}|u|^2 \ud x=a, \int_{\R^N}|v|^2 \ud x=b\right\}.
\eeq
In this context, the parameters $\lambda_1$ and $\lambda_2$ act as Lagrange multipliers.

\medskip

The investigation of normalized solutions to the Schr\"odinger equations and systems involving Sobolev critical exponents can be viewed as a counterpart of the Br\'ezis-Nirenberg problem within the framework of normalized solutions. This area of study has garnered significant interest and attention from scholars. Apart from its practical applications, mathematicians are drawn to these problems due to their stimulating and challenging nature, stemming from the inherent lack of compactness caused by the limiting exponents in the Sobolev embedding theorems.

This kind of study on the normalized problem involving Sobolev critical exponent seems to be started by Soave\cite{Soave2020}, where he considered the scalar equation
\beq\lab{eq:20240301-e1}
\begin{cases}
-\Delta u+\lambda u=f(u)~\hbox{in}~\R^N,\\
\int_{\R^N}|u|^2 \ud x=a.
\end{cases}
\eeq
Given that the Sobolev critical exponent also qualifies as a mass supercritical exponent, a Palais-Smale sequence may not be bounded in $H^1(\mathbb{R}^N)$. To address this issue, the author utilized the Pohozaev manifold constraint method, which necessitates the regularity of the corresponding Pohozaev manifold.
For this reason, Soave \cite{Soave2020,Soave2020a} decomposed the Pohozaev manifold by
$$P_{a}=\{u\in S_a: P(u)=0\}=P_{+}^{a}\cup P_{0}^{a}\cup P_{-}^{a},$$
where $$
P(u):=\|\nabla u\|_2^2+\frac{N}{2}\int_{\R^N}(2F(u)-f(u)u)\ud x
$$
and
$$\begin{cases}
P_{+}^{a}:=\{u\in P_{a}:\frac{\mathrm{d}^2}{\mathrm{d}s^2}I(e^{\frac{Ns}{2}}u(e^s \cdot))\Big|_{s=0}>0\},\\
P_{-}^{a}:=\{u\in P_{a}:\frac{\mathrm{d}^2}{\mathrm{d}s^2}I(e^{\frac{Ns}{2}}u(e^s \cdot))\Big|_{s=0}<0\},\\
P_{0}^{a}:=\{u\in P_{a}:\frac{\mathrm{d}^2}{\mathrm{d}s^2}I(e^{\frac{Ns}{2}}u(e^s \cdot))\Big|_{s=0}=0\}.
\end{cases}$$
(For the more accurate definition, one can refer to \cite[formula (1.13)]{Soave2020} or \cite[formula (1.12)]{Soave2020a}). When $P_{0}^{a}=\emptyset$, the Pohozaev manifold is a natural constraint. It is worth mentioning that in the mass mixed case, both $P_{+}^{a}\neq \emptyset$ and $P_{-}^{a}\neq \emptyset$. Therefore, $P_{0}^{a}\neq \emptyset$ is likely to occur when considering a very general nonlinearity. Consequently, Soave \cite{Soave2020} only considered the double power case where $f(u)=\mu |u|^{q-2}u+|u|^{2^*-2}u$, enabling him to demonstrate that $P_{0}^{a}=\emptyset$, thus establishing the Pohozaev manifold as a natural constraint. We remark that it may happen that $P_{0}^{a}\neq\emptyset$ for the general nonlinearity case, see for example \cite{Cassani2024}, where counterexample was constructed by Cassani et al.

\medskip

Before presenting the main results of this paper, we  first trace the research backgrounds of the pertinent problems in this paper through several Remarks. In Remark \ref{remark:20240309-r1}, we outline the research trajectory concerning the coupled equations with mass constraints, with a specific focus on the open problem posed by Soave et al. that pertain to this present work. Remark \ref{remark:mass-mixed-1}  will particularly delve into the mass mixed case with critical exponents, which is closely intertwined with our work, along with its research progress on the related open problems. Finally, Remark \ref{remark:mass-mixed-2} will further elucidate the relationship between the two distinct type of mass mixed problems addressed in the systems, emphasizing the respective characteristics and our research challenges of the distinctive mass mixed case with critical coupling exponents explored in this paper, through a comparative analysis of their similarities and differences.

\br\lab{remark:20240309-r1}
Let's outline the research backgrounds regarding  problem \eqref{eq:20201206-e1}-\eqref{eq:norm}.
\begin{itemize}
\item[(i)]{\bf Mass sub-critical case:} When $p,q,\alpha+\beta<2+\frac{4}{N}$, the functional $J$ is coercive and thus bounded from below when constrained on $T(a,b)$ (see \eqref{T}).
So, it is possible to find a global minimizer after solving some compactness issues. Furthermore, the set of global minimizers is orbitally stable. In that direction, there had been a good amount of works, directly on \eqref{eq:20201206-e1}-\eqref{eq:norm}, or on related problems, see \cite{Bhattarai2016,Cao2011,Nguyen2016,Ohta1996,Gou2016} and references therein.
\item[(ii)]{\bf Pure mass super-critical case:}
When $N=3, p=q=4, \alpha=\beta=2$, this is a kind of pure mass super-critical case.
The repulsive case ($\nu<0$) has been investigated by Bartsch-Soave \cite{Bartsch2019}, where they obtain infinitely many positive solutions. On the other hand, the attractive case ($\beta>0$) is explored by Bartsch-Jeanjean-Soave \cite{Bartsch2016}, where the existence of positive normalized solutions is established under the conditions $0<\nu<\nu_1(a,b)$ or $\nu>\nu_2(a,b)$. Due to technical constraints, here $\nu_1(a,b)$ and $\nu_2(a,b)$ represent two values dependent on the prescribed masses $a$ and $b$. Furthermore, $\nu_1(a,b)\rightarrow 0, \nu_2(a,b)\rightarrow +\infty$ as $\frac{a}{b}\rightarrow 0$ or $+\infty$. Particularly, there exists no single value of $\nu$ for which the result holds across all masses $a$ and $b$. So, it is natural  to ask what the best range of $\nu$ for the existence of positive normalized solution is. Specifically, one might question whether it is possible to find some $\nu$ such that the existence is valid for all masses $a,b$.
 Indeed, these questions are posed as {\bf open problems} by Bartsch,Jeanjean and Soave, see \cite[Remark 1.3-(a)and (d)]{Bartsch2016}.  When considering known frequencies $\lambda_1$ and $\lambda_2$, additional structural conditions on the parameters are required to ensure the existence of positive solutions, as noted in Sirakov's open problem \cite[Remark 4]{Sirakov2007} and its subsequent progress \cite{Wei2022}. Consequently, Bartsch-Jeanjean-Soave's open problem can be regarded as Sirakov's open problem in the context of normalized solutions. For an answer to Bartsch-Jeanjean-Soave's open problem, we refer to Bartsch-Zhong-Zou \cite{Bartsch2021}. A more general case is addressed by Bartsch-Jeanjean \cite{Bartsch2018}, where they also stipulate that $0<\nu<\nu_1(a,b)$ or $\nu>\nu_2(a,b)$. Hence, there also exists a general Bartsch-Jeanjean-Soave's open problem for the pure mass super-critical case. In fact, progress has been made in this direction by Jeanjean-Zhang-Zhong \cite{JeanZhangZhong2024b}.

\item[(iii)]{\bf Mass mixed case:}
    When at least one of $p$, $q$, $\alpha+\beta$ is less than $2+\frac{4}{N}$, and also at least one of them is larger than $2+\frac{4}{N}$, the scenario falls into the mass-mixed case. Analogous to the scalar equation, owing to the geometric structure of the functional, it is reasonable to anticipate the existence of at least two distinct positive normalized solutions within certain parameter ranges.
    For the system directly addressing problem \eqref{eq:20201206-e1} to \eqref{eq:norm}, the mass-mixed case was initially investigated by Gou-Jeanjean \cite{Gou2018}, encompassing but not limited to the following conditions:
\begin{itemize}
\item[$(H_0)$] $N\geq 1, 1<p,q<2+\frac{4}{N}, \alpha,\beta>1, 2+\frac{4}{N}<\alpha+\beta<2^*$;
\item[$(H_1)$] $N\geq 1, 2+\frac{4}{N}<p,q<2^*, \alpha,\beta>1, \alpha+\beta<2+\frac{4}{N}$.
\end{itemize}
Under these conditions, at least two positive normalized solutions are attained.
More precisely, under $(H_0)$ with small $\nu$, Gou-Jeanjean proved the existence of a local minimizer for all $N\geq 1$, as shown in \cite[Theorem 1.1-(i)]{Gou2018}. Recalling the Liouville result that
$$-\Delta u\geq 0~\hbox{in}~\R^N, 0\leq u\in L^2(\R^N)\Rightarrow u\equiv 0$$
which is only valid for $N\leq 4$ (refer to \cite[Lemma A.2]{Ikoma2014}), the authors established the existence of the second positive solution in \cite[Theorem 1.1-(ii)]{Gou2018}, provided $2\leq N\leq 4$ or $N\geq 5$ under conditions such as:
\begin{itemize}
\item[$(H_2)$] either $p,q\leq \alpha+\beta-\frac{2}{N}$ or $|p-q|\leq \frac{2}{N}$.
\end{itemize}
Similarly, under $(H_1)$ with small $\nu$, for $1\leq N\leq 4$ or $N\geq 5$ with $\alpha,\beta\geq \frac{(\alpha+\beta-2)N}{2}$, they demonstrated the existence of a local minimizer. However, a second positive normalized solution was only established for $2\leq N\leq 4$, owing to the absence of a Liouville-type result for $N\geq 5$, which renders $L^2$-compactness considerably more challenging.

\end{itemize}
\er

\br\lab{remark:mass-mixed-1}
We now talk further more about the mass mixed case involving Sobolev critical exponents.
\begin{itemize}
\item[(i)]
For the scalar equation \eqref{eq:20240301-e1} with $f(u)=\mu |u|^{q-2}u+|u|^{2^*-2}u$, where $2<q<2+\frac{4}{N}$, Soave established the existence of a local minimizer for small mass $a>0$ (see \cite[Theorem 1.1- 1)]{Soave2020}). Furthermore, he investigated the mountain pass geometry structure and the existence of a bounded Palais-Smale sequence at a level higher than the local minimum. However, due to the Sobolev critical growth of the nonlinearity, it beomes imperative to provide a precise estimation of the mountain pass level to ensure the compactness of the corresponding Palais-Smale sequence. Unfortunately, Soave encountered technical obstacles preventing him from offering a satisfactory estimation, leaving the existence of a mountain pass solution as an open problem (see \cite[Remark 1.1]{Soave2020}).
Subsequently, Jeanjean and Le resolved this open problem for $N\geq 4$, while Wei and Wu addressed the case of $N=3$ \cite{Jeanjean2022, Wei2022-X}. However, all references mentioned solely focused on the double power case.
More recently, R\u{a}dulescu, Zhang, Zhong, and Zhou have successfully tackled this open problem within the context of very general nonlinearity, as demonstrated in \cite[Theorem 1.16]{Radulescu2024}.
\item[(ii)]
In the coupled system case, Bartsch, Li, and Zou \cite{Bartsch2023} investigated Eq. \eqref{eq:20201206-e1}-\eqref{eq:norm} with $\mu_1=\mu_2=1$ and $p=q=2^*$. They established the existence of a positive normalized ground state for the case $\alpha+\beta<2+\frac{4}{N}$, provided $\nu>0$ is suitably small, by seeking a local minimizer. Similar to the scalar case, it is natural to expect a second solution of mountain pass type. However, due to technical constraints, the authors in \cite{Bartsch2023} were unable to control the mountain pass value to recover compactness. Consequently, Bartsch et al. posed an open problem focusing on this problem (see \cite[Remark 1.3-a)]{Bartsch2023}), which can be regarded as a Soave-type open problem for the system case.
Recently, Zhang-Zhong-Zhou \cite{Zhang-Zhong-Zhou-2024} solved this open problem.
\end{itemize}
\er

\br\lab{remark:mass-mixed-2}Finally, we emphasize the differences and challenges of the particular mass mixed case studied in this paper with critical coupling exponents, compared with those discussed earlier.
\begin{itemize}
\item[(i)]
In the present paper, we explore another mass mixed case of Eq. \eqref{eq:20201206-e1}-\eqref{eq:norm} with $p,q\in (2,2+\frac{4}{N})$ and $\alpha+\beta=2^*$. It's worth noting that for the Sobolev subcritical case, i.e., $\alpha+\beta<2^*$, this matter has been addressed by Gou and Jeanjean \cite{Gou2018}. Similar to many other mixed cases discussed earlier, we anticipate finding at least two distinct positive normalized solutions under certain suitable assumptions: one being a local minimizer and the other a mountain pass solution. Since $\alpha+\beta=2^*$, it remains essential to provide a precise estimation of the mountain pass level, as done previously. Our goal is to address this Soave-type open problem in such a scenario, which is also the primary motivation behind this project. As we shall demonstrate, this case is considerably more intricate and challenging than the one considered in \cite{Zhang-Zhong-Zhou-2024}, for which we will provide further explanations in the next section after presenting the main results of this paper.
\item[(ii)]Referring back to Remark \ref{remark:20240309-r1}-(iii), under the assumption $(H_0)$ with small $\nu$, if $N\geq 5$, Gou and Jeanjean still necessitated $(H_2)$. In the present paper, as stated in our Theorem \ref{th:20240216-t1}, we are able to eliminate the technical assumption $(H_2)$.
\end{itemize}
\er

\subsection{Main results}
Firstly, we construct the following result which is concerned with the existence of local minimizer.

\bt\lab{th:20231206-t1}
Assume that $2<p, q<2+\frac{4}{N}, \alpha>1, \beta>1, \alpha+\beta=2^*:=\frac{2N}{N-2}, N\geq 3$ and $a>0,b>0$, there exists some $\nu_0=\nu_0(a,b)>0$ such that for any $\nu\in (0,\nu_0)$ and $(a_1,b_1)\in (0,a]\times (0,b]$, there is a positive normalized solution $(u_0,v_0,\lambda_1,\lambda_2)\in H^1(\R^N,\R^2)\times \R^2$ to
\beq\lab{eq:20240217-xe1}
\begin{cases}
-\Delta u+\lambda_1 u=\mu_1|u|^{p-2}u+\nu\alpha|u|^{\alpha-2}u|v|^\beta, \quad &\hbox{in}~\R^N,\\
-\Delta v+\lambda_2 v=\mu_2|v|^{q-2}v+\nu\beta|v|^{\beta-2}v|u|^\alpha, \quad &\hbox{in}~\R^N,\\
\int_{\R^N}u^2\ud x=a_1\quad\hbox{and}\;\int_{\R^N}v^2\ud x=b_1,
\end{cases}
\eeq
such that
$$J(u_0,v_0)=m_\nu(a_1,b_1):=\inf_{A_{\rho_0}^{a_1,b_1}}J(u,v)<0,$$
where $$A_{\rho_0}^{a_1,b_1}:=\left\{(u,v)\in T(a_1,b_1): \|\nabla u\|_2^2+\|\nabla v\|_2^2<\rho_0\right\}$$
and $\rho_0=\rho_0(a,b)>0$ is a large number depending only on $a$ and $b$.
In addition, $u_0,v_0$ are Schwartz symmetric functions.
\et

For the sign of the Lagrange multipliers $\lambda_1,\lambda_2$, when $N=3,4$, one can apply the Liouville's argument (see \cite[Lemma A.2]{Ikoma2014}) to conclude that $\lambda_1>0,\lambda_2>0$. However, since we have no more information to deduce $u,v\in L^\eta(\R^N)$ with $\eta<2$,
it seems that the Liouville's argument is not applied for $N\geq 5$. By a direct computation, one can show that $\lambda_1 a_1+\lambda_2 b_1>0$, and thus at least one of $\lambda_1,\lambda_2$ is positive, see Lemma \ref{lemma:20240219-l2}. However, it is not sufficient to say that both $\lambda_1$ and $\lambda_2$ are positive when $N\geq 5$.
Nevertheless, inspired by \cite[Lemma 8.2]{JeanZhangZhong2024b}, we can give the following weaker result.
\bt\lab{th:sign-multipliers}
Under the assumptions of Theorem \ref{th:20231206-t1}, let $\lambda_1,\lambda_2$ be the Lagrange multipliers given in Theorem \ref{th:20231206-t1}.
\begin{enumerate}[label=(\roman*)]
\item \label{th-5-c1} If $N=3,4$, then $\lambda_1,\lambda_2$ are positive.
\item  \label{th-5-c2}If $N\geq 5$, both $\lambda_1$ and $\lambda_2$ are nonnegative and at least one of them is positive. Furthermore, if $p\leq \frac{2N-2}{N-2}$, then $\lambda_1>0$. While if $q\leq \frac{2N-2}{N-2}$, then $\lambda_2>0$.
\end{enumerate}
\et

For small $\nu>0$, we can emphasize that the local minimizer obtain above is indeed a ground state solution.

\bt\lab{th:20240209-t1}
Under the assumptions of Theorem \ref{th:20231206-t1}, the solution $(u_0,v_0)$ given above is indeed a ground state solution to Eq.\eqref{eq:20240217-xe1}. Precisely, $m_\nu(a_1,b_1)$ is the ground state energy for all $(a_1,b_1)\in [0,a]\times [0,b]\backslash\{(0,0)\}$.
\et

\br\lab{remark:20240217-xr1}
Given our focus on the system case, we impose the condition $a_1\neq 0, b_1\neq 0$ in Theorem \ref{th:20231206-t1}. However, even when $a_1=0,0<b_1\leq b$ or $0<a_1\leq a, b_1=0$, we can still apply the same definition $m_\nu(a_1,b_1)$. Consequently, Theorem \ref{th:20240209-t1} remains applicable for all $(a_1,b_1)\in [0,a]\times [0,b]\backslash{\{(0,0)\}}$. This consistency greatly facilitates our compactness argument, as demonstrated in the proof of Corollary \ref{cro:20240215-c1}.
\er

We can capture the asymptotic behavior of the ground state solution as the parameter $\nu\rightarrow 0^+$ as follows.
\bt\lab{th:20240219-t1}
Under the assumptions of Theorem \ref{th:20240209-t1}, let $(a_1,b_1)\in [0,a]\times[0,b]\backslash \{(0,0)\}$, for any $\nu\in (0,\nu_0)$, suppose that $(u_{\nu}^{a_1,b_1}, v_{\nu}^{a_1,b_1}, \lambda_{\nu,1}^{a_1,b_1},\lambda_{\nu,2}^{a_1,b_1})$ attains $m_\nu(a_1,b_1)$. Then
\begin{equation*}
(u_{\nu}^{a_1,b_1}, v_{\nu}^{a_1,b_1})\rightarrow (w_{p,\mu_1,a_1}, w_{q,\mu_2,b_1})~\hbox{in}~H^1(\R^N,\R^2)~\hbox{as}~\nu\rightarrow 0^+,
\end{equation*}
where $w_{p,\mu,a}$ denotes the unique positive radial normalized solution to
\begin{equation*}
\begin{cases}
-\Delta w+\lambda w=\mu w^{p-1}~\hbox{in}~\R^N,\\
u\in H^1(\R^N), \|u\|_2^2=a.
\end{cases}
\end{equation*}
\et

Consequently, we have the following results.
\bt\lab{th:20240219-xbt1}
Under the assumptions of Theorem \ref{th:20240209-t1}, for any $\varepsilon>0$ small, there exists some $\nu_\varepsilon\in (0,\nu_0)$ such that the following conclusions are true.
\begin{enumerate}[label=(\roman*)]
\item \label{th-9-p1}The ground state solution of Eq.\eqref{eq:20240217-xe1} is unique provided $(a_1,b_1,\nu)\in [\varepsilon,a]\times [\varepsilon,b]\times (0,\nu_\varepsilon)$.
\item \label{th-9-p2}Let $\nu\in (0,\nu_\varepsilon)$ be fixed, then $(u_{\nu}^{a_1,b_1}, v_{\nu}^{a_1,b_1}, \lambda_{\nu,1}^{a_1,b_1},\lambda_{\nu,2}^{a_1,b_1})$  is continuous respect to $(a_1,b_1)\in [\varepsilon,a]\times [\varepsilon,b]$.
\end{enumerate}
\et

\medskip
Finally, we present our last main result, which is also the most crucial and intricate theorem in this project. It resolves Soave's open problem for this system case. It's worth noting that the mixed case with $p=q=2^*, \alpha+\beta<2+\frac{4}{N}$ has been solved by Zhang-Zhong-Zhou \cite{Zhang-Zhong-Zhou-2024}. However, the case we consider here, with $p,q<2+\frac{4}{N}, \alpha+\beta=2^*$, is much more complex than the one considered in \cite{Zhang-Zhong-Zhou-2024}. Our result can be seen as an extension of \cite[Theorem 1.1-(ii)]{Gou2018} to the Sobolev critical case. More notably, our result is valid for high-dimensional cases with $N\geq 5$ without the technical assumption $(H_2)$.

\bt\lab{th:20240216-t1}
Under the assumptions of Theorem \ref{th:20240209-t1}, for any $(a_1,b_1)\in (0,a]\times (0,b]$, there is a second positive radial normalized solution $(\bar{u},\bar{v},\bar{\lambda}_1,\bar{\lambda}_2)\in H^1(\R^N,\R^2)\times \R^2$ to \eqref{eq:20240217-xe1}, which is of mountain pass type, such that
$$0<J(\bar{u},\bar{v})<m_\nu(a_1,b_1)+\frac{2}{N-2} \nu^{-\frac{N-2}{2}} \alpha^{-\frac{(N-2)\alpha}{4}} \beta^{-\frac{(N-2)\beta}{4}} S^{\frac{N}{2}}.$$
\et
\medskip

We give some remarks, further emphasizing the significance of the results obtained in this present paper.
\begin{remark}
In the critical framework of mass-fixed exponents, notable comparisons stem from Gou-Jeanjean's exploration of the Sobolev subcritical case, alongside the contributions of Bartsch-Li-Zou, as well as Zhang-Zhong-Zhou, who also systematically tackle Sobolev critical exponents.
\smallskip

(1) Compared to the work of Gou-Jeanjean \cite{Gou2018}, where energy estimates were not required, our problem involves Sobolev critical growth, necessitating precise estimates of mountain pass values to address the compactness issue. Another aspect is that in the case of $N \geq 5$, there is no corresponding Liouville theorem. As a result, obtaining non-triviality of the Lagrange multiplier directly becomes challenging, making the handling of $L^2$ compactness relatively difficult.
\smallskip

(2) Comparing with the work of Bartsch-Li-Zou\cite{Bartsch2023}, significant differences in energy estimates arise between a system and a single equation. Despite efforts to address the Soave's open problem for a single equation by Jeanjean-Le\cite{Jeanjean2022} and Wei-Wu\cite{Wei2022-X}, Bartsch-Li-Zou\cite{Bartsch2023} still struggled to provide accurate estimates for the mountain pass level of the critical mixed mass, leaving it unresolved once again.
\smallskip

(3) In the article by Zhang-Zhong-Zhou \cite{Zhang-Zhong-Zhou-2024}, although they solve the Soave's open problem for some critical mass mixed case, the value of the compactness barrier only increases by an additional amount on top of the ground state energy, which corresponds to the minimum contribute when both components of the system concentrate. This means that when making mountain pass estimates, it suffices to pile  up on one component while the other component remains unchanged, and the coupling terms during the process do not pose any essential interference due to being subcritical. Whereas, in our case, the coupling term is critical, so if concentration occurs, it must be when both components are concentrated simultaneously. Thus, when constructing the corresponding mountain pass, we need to pile up on both components, and since the coupling term is Sobolev critical, it requires precise estimates for the cross terms, which differs significantly from the estimation process in Zhang-Zhong-Zhou \cite{Zhang-Zhong-Zhou-2024}. To provide better estimates for these cross terms, we need to develop some new techniques on the interaction estimates, which takes up a considerable amount of space, before finally processing them.
\end{remark}

\begin{remark}
When $N=2$, the critical Sobolev embedding is described by the Trudinger-Moser inequality. For the normalized solutions of scalar equations involving exponential critical growth, studies on the pure mass supercritical case can be found in Alves-Ji-Miyagaki \cite{Alves2022}, Chang-Liu-Yan \cite{Chang2023}, Zhang-Zhang-Zhong \cite{Zhang2022}, and Dou-Huang-Zhong \cite{Dou2023}. For the study of the mass mixed case, references include Cassani-Huang-Tarsi-Zhong \cite{Cassani2024}, and Chen-Tang \cite{Chen2023}. As for the system case involving exponential critical growth, we refer to Deng-Yu \cite{Deng2022a}, and Deng-Huang-Zhang-Zhong \cite{Deng2024}. However, both \cite{Deng2022a} and \cite{Deng2024} focus on the pure mass supercritical case. The mass mixed case remains unexplored for systems with exponential critical growth when $N=2$.
\end{remark}

\medskip
{\bf{Organization:}}
The structure of the paper is as follows: Section \ref{sec:local} deals with the local minimization problems. Section \ref{sec:mountain-pass} discusses the existence of the  mountain pass solution, and the appendix provides some technical tools.

\medskip

{\bf{Notations and conventions:}}

$\bullet$ Write $\|u\|_p:=(\int_{\R^N}|u|^p\mathrm{d}x)^{\frac{1}{p}} $ with $1\leq p<\infty$.

$\bullet$ $T(a,b):=\{(u,v)\in H^1(\R^N,\R^2): \|u\|_2^2=a, \|v\|_2^2=b\}$.

$\bullet$ $H_{rad}^1(\R^N)$ denotes the subspace of functions in $H^1(\R^N)$ which is radially symmetric with respect to 0.

$\bullet$ $T_{rad}(a,b):=T(a,b)\cap H_{\mathrm{rad}}^{1}(\R^N,\R^2)$.

$\bullet$ $(t\star u)(x):=t^{\frac{N}{2}}u(tx)$ for $t>0$ and $u\in H^1(\R^N)$ and $t\star (u,v):= (t\star u, t\star v)$.

$\bullet$  $\mathcal{M}(\R^N)$ denotes the space of finite measures on $\R^N$.

$\bullet$ $\|u\|:=\sqrt{\|\nabla u\|_2^2+\|u\|_2^2}$ is the norm in $H^1(\mathbb{R}^N)$.

$\bullet$ $\mathbb{R}^+=[0,+\infty)$.

$\bullet$ $D_{0}^{1,2}(\mathbb{R}^N)=D^{1,2}(\mathbb{R}^N)$, denote the completion of $C_c^\infty(\mathbb{R}^N)$ with respect to the norm $\|\nabla u \|_{2}$, which is the standard homogeneous Sobolev space.

$\bullet$ $S$ is the sharp constant in the critical Sobolev embedding that $S\|u\|_{2^*}^{2}\leq \|\nabla u\|_2^2, \forall u\in D_{0}^{1,2}(\mathbb{R}^N)$.

$\bullet$ To ensure unambiguous determination of the sign in energy error estimates, we adopt the convention throughout this paper that the notation $O(n^{-s})$ represents a term of order $n^{-s}$ with a \textbf{positive} coefficient. Correspondingly, if the term carries a \textbf{negative} sign, it will be explicitly denoted as $-O(n^{-s})$.

$\bullet$ $J: H^1(\mathbb{R}^N,\mathbb{R}^2)\mapsto \mathbb{R}$ is the energy functional, defined by $J(u,v)=\frac{1}{2}(\|\nabla u\|_2^2+\|\nabla v\|_2^2)-\frac{1}{p}\mu_1\|u\|_p^p -\frac{1}{q}\mu_2\|v\|_q^q -\nu \int_{\mathbb{R}^N}|u|^\alpha |v|^\beta \mathrm{d}x$

$\bullet$ $P: H^1(\mathbb{R}^N,\mathbb{R}^2)\mapsto \mathbb{R}$ is the corresponding Pohozaev functional, defined as $P(u,v)=(\|\nabla u\|_2^2+\|\nabla v\|_2^2)-\frac{\gamma_p}{p}\mu_1\|u\|_p^p -\frac{\gamma_q}{q}\mu_2\|v\|_q^q -2^*\nu \int_{\mathbb{R}^N}|u|^\alpha |v|^\beta \mathrm{d}x$, where $\gamma_p:=\frac{(p-2)N}{2}$.

\section{Local minimizer and a sequence of properties}\label{sec:local}

\subsection{Local minimal structure }\lab{subsection:local-mini}

\begin{definition}\label{def-20251011-1615}
We say that a sequence $\{f_n\}$ exhibits a \textbf{concentration phenomenon} at a point $\bar{x} \in \mathbb{R}^N$ if (up to a subsequence)
$$\lim_{\varepsilon\to 0^+}\lim_{n \to \infty} \int_{B_\varepsilon(\bar{x})} |f_n|  \mathrm{d}x >0,$$
\textcolor{red}{and at infinity if
$$\lim_{R\to \infty}\lim_{n \to \infty} \int_{|x|\geq R} |f_n|  \mathrm{d}x >0.$$}
\end{definition}

\bl\lab{lemma:20231206-l1}
There exist $\nu_0, k_0,\rho_0>0$ such that for any $(a_1,b_1)\in [0,a]\times [0,b]\backslash\{(0,0)\}$ and $\nu\in (0,\nu_0)$,
\begin{equation*}
m_\nu(a_1,b_1):=\inf_{A_{\rho_0}^{a_1,b_1}}J(u,v)<0<k_0\leq \inf_{B_{\rho_0}^{a_1,b_1}}J(u,v),
\end{equation*}
where $A_{\rho_0}^{a_1,b_1}$ and $B_{\rho_0}^{a_1,b_1}$ are given by
$$A_{\rho_0}^{a_1,b_1}:=\left\{(u,v)\in T(a_1,b_1): \|\nabla u\|_2^2+\|\nabla v\|_2^2<\rho_0\right\}$$
and
$$B_{\rho_0}^{a_1,b_1}:=\left\{(u,v)\in T(a_1,b_1): \rho_0\leq \|\nabla u\|_2^2+\|\nabla v\|_2^2\leq 2\rho_0\right\}.$$
\el
\bp
~We firstly recall the well known Gagliardo-Nirenberg inequality that for any $2<p<2^*$, there exists some $C_p>0$ (depends only on $N$ and $p$) such that
$$\|u\|_p^p\leq C_p \|u\|_{2}^{p-\gamma_p} \|\nabla u\|_{2}^{\gamma_p}, \quad\forall u\in H^1(\mathbb{R}^N).$$

Let $(u,v)\in T(a_1,b_1)$, then $t\star (u,v)\in A_{\rho_0}^{a_1,b_1}$ for $t>0$ small. Furthermore, note that
\[
J(t\star (u,v))=\frac{1}{2}(\|\nabla u\|_2^2+\|\nabla v\|_2^2)t^2 -\frac{1}{p}\mu_1\|u\|_p^p t^{\gamma_p}-\frac{1}{q}\mu_2\|v\|_q^q t^{\gamma_q}
               -\nu \int_{\R^N}|u|^\alpha |v|^\beta t^{2^*}\mathrm{d}x.
\]
We have that $\gamma_p<2,\gamma_q<2$ since $p,q\in (2,2+\frac{4}{N})$. Thus, $J(t\star (u,v))<0$ for $t>0$ small enough, which implies that
$m_\nu(a_1,b_1)<0$.

On the other hand, for any $(u,v)\in B_{\rho_0}^{a_1,b_1}$, noting that $p-\gamma_p>0,q-\gamma_q>0$ and $a_1\leq a, b_1\leq b$, we have that
\begin{align*}
J(u,v)=&\frac{1}{2}(\|\nabla u\|_2^2+\|\nabla v\|_2^2)-\frac{1}{p}\mu_1\|u\|_p^p -\frac{1}{q}\mu_2\|v\|_q^q -\nu \int_{\R^N}|u|^\alpha |v|^\beta \ud x\\
\geq&\frac{1}{2}(\|\nabla u\|_2^2+\|\nabla v\|_2^2)-\frac{1}{p}\mu_1C_p \|u\|_{2}^{p-\gamma_p}\|\nabla u\|_{2}^{\gamma_p}\\
    &-\frac{1}{q}\mu_2C_q \|v\|_{2}^{q-\gamma_q}\|\nabla v\|_{2}^{\gamma_q}-\nu S^{-\frac{2^*}{2}}\|\nabla u\|_2^\alpha \|\nabla v\|_2^\beta \ud x\\
\geq& \frac{1}{2}\rho_0-\frac{1}{p}\mu_1C_p a^{\frac{p-\gamma_p}{2}} (2\rho_0)^{\frac{\gamma_p}{2}}-\frac{1}{q}\mu_2C_q b^{\frac{q-\gamma_q}{2}}(2\rho_0)^{\frac{\gamma_q}{2}} -\nu S^{-\frac{2^*}{2}} (2\rho_0)^{\frac{2^*}{2}}\\
=&\rho_0\left(\frac{1}{2}-\frac{2^{\frac{\gamma_p}{2}}}{p}\mu_1 C_p a^{\frac{p-\gamma_p}{2}}\rho_{0}^{\frac{\gamma_p-2}{2}}-\frac{2^{\frac{\gamma_q}{2}}}{q}\mu_2 C_q b^{\frac{q-\gamma_q}{2}}\rho_{0}^{\frac{\gamma_q-2}{2}}-2^{\frac{2^*}{2}}\nu S^{-\frac{2^*}{2}} \rho_{0}^{\frac{2^*-2}{2}}\right)\\
=:&\rho_0 h_\nu (\rho_0),
\end{align*}
where
\begin{equation*}
h_\nu(\rho):=\frac{1}{2}-\frac{2^{\frac{\gamma_p}{2}}}{p}\mu_1 C_p a^{\frac{p-\gamma_p}{2}}\rho^{\frac{\gamma_p-2}{2}}-\frac{2^{\frac{\gamma_q}{2}}}{q}\mu_2 C_q b^{\frac{q-\gamma_q}{2}}\rho^{\frac{\gamma_q-2}{2}}-2^{\frac{2^*}{2}}\nu S^{-\frac{2^*}{2}} \rho^{\frac{2^*-2}{2}}.
\end{equation*}
By $\frac{\gamma_p-2}{2}<0, \frac{\gamma_q-2}{2}<0$ and $p-\gamma_p>0, q-\gamma_q>0$, we can take $\rho_0=\rho_0(a,b)$ large enough such that for all $(a_1,b_1)\in [0,a]\times [0,b]$,
\beq\lab{eq:20240217-xe3}
\begin{aligned}
&\frac{1}{2}-\frac{2^{\frac{\gamma_p}{2}}}{p}\mu_1 C_p a_{1}^{p-\gamma_p}\rho_{0}^{\frac{\gamma_p-2}{2}}-\frac{2^{\frac{\gamma_q}{2}}}{q}\mu_2 C_q b_{1}^{q-\gamma_q}\rho_{0}^{\frac{\gamma_q-2}{2}}\\
\geq &\frac{1}{2}-\frac{2^{\frac{\gamma_p}{2}}}{p}\mu_1 C_p a^{p-\gamma_p}\rho_{0}^{\frac{\gamma_p-2}{2}}-\frac{2^{\frac{\gamma_q}{2}}}{q}\mu_2 C_q b^{q-\gamma_q}\rho_{0}^{\frac{\gamma_q-2}{2}}\\
>&\frac{1}{4}.
\end{aligned}
\eeq
Then we can take $\nu_0=\nu_0(a,b)$ small enough such that $h_\nu(\rho_0)>h_{\nu_0}(\rho_0)>0$.
So, we can take $k_0:=\rho_0 h_{\nu_0}(\rho_0)>0$, which depends on $a,b$ but independent of the choices of $a_1,b_1$. We complete the proof.
\ep

\br\lab{remark:20231206-r1}
Adopting the notations in Lemma \ref{lemma:20231206-l1}, it is easy to see that for any $(a_1,b_1,\nu)\in [0,a]\times[0,b]\times (0,\nu_0)$ with $(a_1,b_1)\neq (0,0)$, it holds that
\beq\lab{eq:20231206-xe3}
m_\nu(a_1,b_1):=\inf_{A_{\rho_0}^{a_1,b_1}}J(u,v)=\inf_{A_{2\rho_0}^{a_1,b_1}}J(u,v).
\eeq
\er

We now recall the couple rearrangement introduced by Shibata\cite{Shibata2017} as presented in \cite{Ikoma2014}. Let $u$ be a Borel measurable function defined on $\R^N$. If $mes(\{x\in \R^N,|u(x)|>t\})<\infty$ for any $t>0$, we say that $u$ is vanishing at infinity. For two Borel measurable functions $u,v$ which are vanishing at infinity and $t>0$, define
$A^*(u,v,t):=B(0,r)$ with $r>0$ chosen so that $mes(B(0,r))=mes(\{x\in \R^N,|u(x)|>t\})+mes(\{x\in \R^N,|v(x)|>t\})$.
Then the couple rearrangement of $u,v$, which is denoted by $\{u,v\}^*$, can be given by
\beq\lab{eq:20231206-xe4}
\{u,v\}^*(x):=\int_0^\infty \chi_{A^*(u,v,t)}(x)\ud t,
\eeq
where $\chi_A(x)$ is the characteristic function of the set $A\subset \R^N$.

The following  properties for the coupled rearrangement are useful for the study of constraint minimizing problems.

\bl\lab{lemma:20231206-l2} \cite[Lemma A.1]{Ikoma2014}
\begin{enumerate}[label=(\roman*)]
\item \label{l-p-1} The function $\{u,v\}^*$ is radially symmetric, non-increasing and lower semi-continuous. Moreover, for each $t>0$ there holds $\{x\in \R^N: \{u,v\}^*(x)>t\}=A^*(u,v,t)$.
\item \label{l-p-2} Let $\Phi:[0,\infty)\rightarrow [0,\infty)$ be non-decreasing, lower semi-continuous, continuous at $0$ and $\Phi(0)=0$. Then $\{\Phi(u), \Phi(v)\}^*=\Phi(\{u,v\}^*)$.
\item \label{l-p-3} $\|\{u,v\}^*\|_p^p=\|u\|_p^p+\|v\|_p^p$ for $1\leq p<\infty$.
\item  \label{l-p-4} If $u,v\in H^1(\R^N)$, then $\{u,v\}^*\in H^1(\R^N)$ and $\|\nabla \{u,v\}^*\|_2^2\leq \|\nabla u\|_2^2+\|\nabla v\|_2^2$. In addition, if $u,v\in (H^1(\R^N)\cap C^1(\R^N))\backslash\{0\}$ are radially symmetric, positive and non-increasing, then $\|\nabla \{u,v\}^*\|_2^2< \|\nabla u\|_2^2+\|\nabla v\|_2^2$.
\item \label{l-p-5} Let $u_1,u_2, v_1,v_2\geq 0$ are Borel measurable functions which are vanishing at infinity, then
  \begin{equation*}
  \int_{\R^N}(u_1v_1+u_2v_2)\ud x\leq \int_{\R^N}\{u_1,u_2\}^* \{v_1,v_2\}^*\ud x.
  \end{equation*}
\end{enumerate}
\el

By taking $\Phi(s)=s^\alpha$ or $\Phi(s)=s^\beta$ with $\alpha,\beta\geq 1$, the properties \ref{l-p-2}  and \ref{l-p-5} in Lemma \ref{lemma:20231206-l2} imply that
\beq\lab{eq:20231206-xe5}
\int_{\R^N}|u_1|^\alpha |v_1|^\beta + |u_2|^\alpha |v_2|^\beta \ud x \leq \int_{\R^N}\{|u_1|^\alpha, |u_2|^\alpha\}^* \{|v_1|^\beta,|v_2|^\beta\}^* \ud x=
\int_{\R^N}{\{u_1,u_2\}^*}^{\alpha}{\{v_1,v_2\}^*}^{\beta}\ud x,
\eeq
which is also proved in a different way in \cite[Lemma 2.2]{Gou2018}.

\bl\lab{lemma:20231206-l3}
Let $a,b>0$ and $\nu\in (0,\nu_0)$, where $\nu_0$ is given by Lemma \ref{lemma:20231206-l1}. Then
for any $(a_1,b_1), (a_2,b_2)\in [0,a]\times [0,b]$ with $a_1+a_2\leq a, b_1+b_2\leq b$, it holds that
$$m_\nu(a_1+a_2, b_1+b_2)\leq m_\nu(a_1,b_1)+m_\nu(a_2,b_2).$$
\el
\bp
~For any $\varepsilon>0$, we take $(u_1,v_1)\in A_{\rho_0}^{a_1,b_1}$ and $(u_2,v_2)\in A_{\rho_0}^{a_2,b_2}$ such that
\begin{equation*}
J(u_1,v_1)<m_\nu(a_1,b_1)+\varepsilon, J(u_2,v_2)<m_\nu(a_2,b_2)+\varepsilon.
\end{equation*}
Now, let $u=\{u_1,u_2\}^*, v=\{v_1,v_2\}^*$, then by Lemma \ref{lemma:20231206-l2}, one can see that $(u,v)\in A_{2\rho_0}^{a_1+a_2,b_1+b_2}$.
Combing with Remark \ref{remark:20231206-r1} and \eqref{eq:20231206-xe5}, we conclude that
\begin{align*}
m_\nu(a_1+a_2, b_1+b_2)=&\inf_{(\phi,\psi)\in A_{2\rho_0}^{a_1+a_2,b_1+b_2}}J(\phi,\psi)\leq J(u,v)\\
=&\frac{1}{2}(\|\nabla u\|_2^2+\|\nabla v\|_2^2)-\frac{1}{p}\mu_1\|u\|_p^p -\frac{1}{q}\mu_2\|v\|_q^q -\nu \int_{\R^N}|u|^\alpha |v|^\beta \ud x\\
\leq& \frac{1}{2}(\|\nabla u_1\|_2^2+\|\nabla u_2\|_2^2+\|\nabla v_1\|_2^2+\|\nabla v_2\|_2^2) -\frac{1}{p}\mu_1(\|u_1\|_p^p+\|u_2\|_p^p)\\
&-\frac{1}{q}\mu_2(\|v_1\|_q^q+\|v_2\|_q^q)-\nu \int_{\R^N}|u_1|^\alpha |v_1|^\beta + |u_2|^\alpha |v_2|^\beta \ud x\\
=&J(u_1,v_1)+J(u_2,v_2)\\
\leq&m_\nu(a_1,b_1)+m_\nu(a_2,b_2)+2\varepsilon.
\end{align*}
By the arbitrary of $\varepsilon>0$, we finish the proof.
\ep

\br\lab{remark:20231206-r2}
Basing on Lemma \ref{lemma:20231206-l3} and Lemma \ref{lemma:20231206-l1}, we have that $m_\nu(a_1,b_1)$ is decreasing strictly in $[0,a]\times [0,b]$ in the sense that
$$m_\nu(a_1,b_1)<m_\nu(a_2,b_2)$$
provided $0\leq a_2\leq a_1\leq a, 0\leq b_2\leq b_1\leq b$ and $(a_1-a_2, b_1-b_2)\neq (0,0)$ due to $m_\nu(a_1-a_2, b_1-b_2)<0$.
\er

%
%

\br\lab{remark:20231206-wr1}
Let $(a_1,b_1)\in [0,a]\times [0,b]\backslash\{(0,0)\}$. For any $(u,v)\in A_{\rho_0}^{a_1,b_1}$, let $u^*, v^*$ be the Schwartz symmetric rearrangement of $u$ and $v$ respectively. By the well known properties of the Schwartz symmetric rearrangement, we have that
$$J(u^*, v^*)\leq J(u,v).$$
In particular, the equality holds if and only if $(u^*,v^*)=(u(\cdot-\bar{x}), v(\cdot-\bar{x}))$ for some $\bar{x}\in \R^N$.
\er

\bl\lab{lemma:20231206-l5}
For $(a_1,b_1), (a_2,b_2)\in [0,a]\times [0,b]\backslash \{(0,0)\}$ with $a_1+a_2\leq a, b_1+b_2\leq b$, suppose that both $m_\nu(a_1,b_1)$ and $m_\nu(a_2,b_2)$ can be attained. Then it holds that
$$m_\nu(a_1+a_2, b_1+b_2)<m_\nu(a_1,b_1)+m_\nu(a_2,b_2).$$
\el
\bp
~Let $(u_1,v_1)$ and $(u_2,v_2)$ attain $m_\nu(a_1,b_1)$ and $m_\nu(a_2,b_2)$ respectively. By Remark \ref{remark:20231206-wr1}, we may assume that $u_1=u_1^*, v_1=v_1^*, u_2=u_2^*, v_2=v_2^*$.
Now, we let $u=\{u_1,u_2\}^*, v=\{v_1,v_2\}^*$, then we have that $(u,v)\in A_{2\rho_0}^{a_1+a_2, b_1+b_2}$.
Since $(a_1,b_1),(a_2,b_2)\neq (0,0)$, we have that $(u_1,v_1), (u_2,v_2)\neq (0,0)$.
Without loss of generality, we assume that $u_1\neq 0$.

{\bf Case 1: $u_2\neq 0$.} In such a case, $u_1,u_2\in (H^1(\R^N)\cap C^1(\R^N))\backslash\{0\}$ are radially symmetric, positive and non-increasing. So, we can apply Lemma \ref{lemma:20231206-l2}-\ref{l-p-4} to conclude that
\begin{equation*}
\|\nabla u\|_2^2=\|\nabla \{u_1,u_2\}^*\|_2^2< \|\nabla u_1\|_2^2+\|\nabla u_2\|_2^2.
\end{equation*}
Then combining {Remark} \ref{remark:20231206-r1} and Lemma \ref{lemma:20231206-l2}, we have that
\begin{align*}
m_\nu(a_1+a_2,b_1+b_2)=&\inf_{(\phi,\psi)\in A_{2\rho_0}^{a_1+a_2,b_1+b_2}}J(\phi,\psi)\leq  J(u,v)\\
=&\frac{1}{2}(\|\nabla u\|_2^2+\|\nabla v\|_2^2)-\frac{1}{p}\mu_1\|u\|_p^p -\frac{1}{q}\mu_2\|v\|_q^q -\nu \int_{\R^N}|u|^\alpha |v|^\beta \ud x\\
<& \frac{1}{2}(\|\nabla u_1\|_2^2+\|\nabla u_2\|_2^2+\|\nabla v_1\|_2^2+\|\nabla v_2\|_2^2) -\frac{1}{p}\mu_1(\|u_1\|_p^p+\|u_2\|_p^p)\\
&-\frac{1}{q}\mu_2(\|v_1\|_q^q+\|v_2\|_q^q)-\nu \int_{\R^N}|u_1|^\alpha |v_1|^\beta + |u_2|^\alpha |v_2|^\beta \ud x\\
=&J(u_1,v_1)+J(u_2,v_2)=m_\nu(a_1,b_1)+m_\nu(a_2,b_2).
\end{align*}

{\bf Case 2: $u_2= 0$.} Then $v_2\neq 0$. If $v_1\neq 0$, similar to the {\bf Case 1}, we have that $\|\nabla v\|_2^2<\|\nabla v_1\|_2^2+\|\nabla v_2\|_2^2$ and $m_\nu(a_1+a_2,b_1+b_2)<m_\nu(a_1,b_1)+m_\nu(a_2,b_2)$.
If $v_1=0$, then
$$\int_{\R^N}u_1^\alpha v_1^\beta \ud x=0, \int_{\R^N}u_2^\alpha v_2^\beta \ud x=0.$$
In particular, $u=u_1, v=v_2$.
So
\begin{align*}
m_\nu(a_1+a_2,b_1+b_2)=&\inf_{(\phi,\psi)\in A_{2\rho_0}^{a_1+a_2,b_1+b_2}}J(\phi,\psi)\leq  J(u,v)\\
=&\frac{1}{2}(\|\nabla u_1\|_2^2+\|\nabla v_2\|_2^2)-\frac{1}{p}\mu_1\|u_1\|_p^p -\frac{1}{q}\mu_2\|v_2\|_q^q -\nu \int_{\R^N}|u_1|^\alpha |v_2|^\beta \ud x\\
< & \frac{1}{2}(\|\nabla u_1\|_2^2+\|\nabla v_2\|_2^2)-\frac{1}{p}\mu_1\|u_1\|_p^p -\frac{1}{q}\mu_2\|v_2\|_q^q \\
=&J(u_1,0)+J(0,v_2)=m_\nu(a_1,0)+m_\nu(0,b_2).
\end{align*}
\ep

\subsection{Existence of Local minimizer and Proof of Theorem \ref{th:20231206-t1}}\lab{subsec:proof-th1}
Now, let $\{(\phi_n,\psi_n)\}\subset A_{\rho_0}^{a_1,b_1}$ be a minimizing sequence of $J\big|_{T(a_1,b_1)}$ corresponding to $m_\nu(a_1,b_1)$. Then it is easy to see that $\{(\phi_n,\psi_n)\}$ is bounded in $H^1(\R^N,\R^2)$. By Remark \ref{remark:20231206-wr1},  $\{(\phi_n^*,\psi_n^*)\}$ is also a minimizing sequence of $J\big|_{T(a_1,b_1)}$ at the level $m_\nu(a_1,b_1)$.
By Ekeland variational principle,  we can find a bounded Palais-Smale sequence $\{(u_n,v_n)\}$ such that
\begin{equation*}
\begin{aligned}
&\{(u_n,v_n)\}\subset {A_{\rho_0}^{a_1,b_1}}\cap H_{\mathrm{rad}}^{1}(\R^N,\R^2),\\
 &J(u_n,v_n)\rightarrow m_\nu(a_1,b_1), \\
 &(J\big|_{T(a_1,b_1)})'(u_n,v_n)\rightarrow 0.
 \end{aligned}
\end{equation*}
So there exists $\{(\lambda_{1,n},\lambda_{2,n})\}\subset \R^2$ such that
\beq\lab{eq:20231206-xe8}
\begin{cases}
-\Delta u_n+\lambda_{1,n}u_n=\mu_1 |u_n|^{p-2}u_n+\nu\alpha |u_n|^{\alpha-2}u_n |v_n|^\beta+o_n(1) ~\hbox{in}~H^{-1}(\R^N),\\
-\Delta v_n+\lambda_{2,n}v_n=\mu_2 |v_n|^{q-2}v_n+\nu\beta |u_n|^{\alpha} |v_n|^{\beta-2}v_n+o_n(1) ~\hbox{in}~H^{-1}(\R^N).
\end{cases}
\eeq
Since $\{(u_n,v_n)\}$ is bounded in $H^1(\R^N,\R^2)$, it is easy to see that $\{\lambda_{1,n}\}, \{\lambda_{2,n}\}$ are bounded sequences.
So, going to a subsequence if necessary, still denoted by $\{(u_n,v_n)\}, \{\lambda_{1,n}\}, \{\lambda_{2,n}\}$, {there holds}
$(u_n,v_n)\rightharpoonup (u_0,v_0)$ in $H_{\mathrm{rad}}^{1}(\R^N,\R^2)$, $\lambda_{1,n}\rightarrow \lambda_1, \lambda_{2,n}\rightarrow \lambda_2$ in $\R$.
By the radial compact embedding $H_{\mathrm{rad}}^{1}(\R^N)\hookrightarrow\hookrightarrow L^\eta(\R^N), \eta\in (2,2^*), N\geq 2$, we have that $u_n\rightarrow u_0$ in $L^p(\R^N)$ and $v_n\rightarrow v_0$ in $L^q(\R^N)$.
Suppose
\beq\lab{eq:20231206-wbe2}
\begin{cases}
|\nabla (u_n-u_0)|^2 \rightharpoonup \xi_1~\hbox{in}~\mathcal{M}(\R^N),\\
|\nabla (v_n-v_0)|^2 \rightharpoonup \xi_2~\hbox{in}~\mathcal{M}(\R^N),\\
|u_n-u_0|^{2^*}\rightharpoonup \eta_1~\hbox{in}~\mathcal{M}(\R^N),\\
|v_n-v_0|^{2^*}\rightharpoonup \eta_2~\hbox{in}~\mathcal{M}(\R^N),\\
|u_n-u_0|^\alpha |v_n-v_0|^\beta \rightharpoonup \eta_3~\hbox{in}~\mathcal{M}(\R^N)
\end{cases}
\eeq
and define
\begin{equation*}
\begin{cases}
\xi_{\infty,1}:=\lim_{R\rightarrow \infty}\limsup_{n\rightarrow \infty} \int_{|x|\geq R}|\nabla u_n|^2, \\
\xi_{\infty,2}:=\lim_{R\rightarrow \infty}\limsup_{n\rightarrow \infty} \int_{|x|\geq R}|\nabla v_n|^2,\\
\eta_{\infty,1}:=\lim_{R\rightarrow \infty}\limsup_{n\rightarrow \infty} \int_{|x|\geq R}|u_n|^{2^*},\\
\eta_{\infty,2}:=\lim_{R\rightarrow \infty}\limsup_{n\rightarrow \infty} \int_{|x|\geq R}|v_n|^{2^*}, \\
\eta_{\infty,3}:=\lim_{R\rightarrow \infty}\limsup_{n\rightarrow \infty} \int_{|x|\geq R}|u_n|^{\alpha}|v_n|^{\beta}.
\end{cases}
\end{equation*}
{Since} $u_n, v_n \in H_{\mathrm{rad}}^{1}(\R^N)$, combining with \eqref{eq:20231206-xe8}, {by  the definition of $\xi_{\infty, i}$ and the well known Strauss lemma}, one can see that
\begin{equation*}
\xi_{\infty,1}\geq 0,\xi_{\infty,2}\geq 0~\hbox{and}~\eta_{\infty,1}=\eta_{\infty,2}=\eta_{\infty,3}=0.
\end{equation*}

In particular, combining with Sobolev inequality and H\"older inequality, we have that
\beq\lab{eq:20231206-wbe4}
\|\eta_{i}\|^{\frac{2}{2^*}}\leq S^{-1}\|\xi_i\|, i=1,2,
\eeq
\beq\lab{eq:20231206-wbe5}
\|\eta_3\|\leq \|\eta_{1}\|^{\frac{\alpha}{2^*}} \|\eta_{2}\|^{\frac{\beta}{2^*}}
\eeq
and
\beq\lab{eq:20231206-wbe6}
\begin{cases}
\limsup_{n\rightarrow \infty} \|\nabla u_n\|_2^2=\|\nabla u_0\|_2^2+\|\xi_1\|+\xi_{\infty,1},\\
\limsup_{n\rightarrow \infty} \|\nabla v_n\|_2^2=\|\nabla v_0\|_2^2+\|\xi_2\|+\xi_{\infty,2},\\
\limsup_{n\rightarrow \infty} \|u_n\|_{2^*}^{2^*}=\|u_0\|_{2^*}^{2^*}+\|\eta_1\|,\\
\limsup_{n\rightarrow \infty} \|v_n\|_{2^*}^{2^*}=\|v_0\|_{2^*}^{2^*}+\|\eta_2\|,\\
\limsup_{n\rightarrow \infty} \int_{\R^N}|u_n|^\alpha |v_n|^\beta \ud x=\int_{\R^N}|u_0|^\alpha |v_0|^\beta \ud x+\|\eta_3\|.
\end{cases}
\eeq
Furthermore, {by Lemma \ref{lemma:20251011-1545} below, we have that}
\beq\lab{eq:20231207-e1}
\|\xi_1\|=\nu \alpha \|\eta_3\|, \|\xi_2\|=\nu \beta \|\eta_3\|.
\eeq
Then
\begin{align*}
{m_\nu(a_1,b_1)}=&J(u_n,v_n)+o(1)\\
=&\frac{1}{2}(\|\nabla u_0\|_2^2+\|\nabla v_0\|_2^2)-\nu \int_{\R^N}|u_0|^\alpha |v_0|^\beta \ud x-\frac{1}{p}\mu_1\|u_0\|_p^p -\frac{1}{q}\mu_2\|v_0\|_q^q \\
&+\frac{1}{2}(\|\xi_1\|+\|\xi_2\|)-\nu \|\eta_3\|+\xi_{\infty,1}+\xi_{\infty,2}\\
=&J(u_0,v_0)+\frac{1}{2}(\|\xi_1\|+\|\xi_2\|)-\nu \|\eta_3\|+\xi_{\infty,1}+\xi_{\infty,2}\\
=&J(u_0,v_0)+\frac{2^*-2}{2}\nu \|\eta_3\|+\xi_{\infty,1}+\xi_{\infty,2}.
\end{align*}
Put $a_2=\|u_0\|_2^2, b_2=\|v_0\|_2^2$, then by Remark \ref{remark:20231206-r2}, we have that
\[
J(u_0,v_0)\geq m_{\nu}(a_2,b_2)\geq m_\nu(a_1,b_1).
\]
If at least one of $\|\eta_3\|,\xi_{\infty,1},\xi_{\infty,2}$ is positive, then
$$\frac{2^*-2}{2}\nu \|\eta_3\|+\xi_{\infty,1}+\xi_{\infty,2}>0$$
and thus
$$J(u_n,v_n)=J(u_0,v_0)+\frac{2^*-2}{2}\nu \|\eta_3\|+\xi_{\infty,1}+\xi_{\infty,2}+o_n(1)>J(u_0,v_0)\geq m_\nu(a_1,b_1),$$
which is a contradiction.
Hence, $\|\eta_3\|=\xi_{\infty,1}=\xi_{\infty,2}=0$ and then it follows that $\|\xi_1\|=\|\xi_2\|=0$ and $u_n\rightarrow u_0, v_n\rightarrow v_0$ in $D_{0}^{1,2}(\R^N)$. Therefore, $\displaystyle J(u_0,v_0)=\lim_{n\rightarrow \infty}J(u_n,v_n)=m_\nu(a_1,b_1)$.

Moreover, by the strictly decreasing property in Remark \ref{remark:20231206-r2} again, we conclude that $a_2=a_1, b_2=b_1$, i.e., $(u_0,v_0)\in T(a_1,b_1)$.
Recalling Remark \ref{remark:20231206-wr1}, we see that $u_0, v_0$ are Schwartz symmetric functions.
Combining with the elliptic regularity theory, one can see that $(u_0,v_0)$ is a positive smooth solution to
\[
\begin{cases}
-\Delta u_0+\lambda_1 u_0=\mu_1 |u_0|^{p-2}u_0+\nu\alpha |u_0|^{\alpha-2}u_0 |v_0|^\beta ~&\hbox{in}~\R^N,\\
-\Delta v_0+\lambda_2 v_0=\mu_2 |v_0|^{q-2}v_0+\nu\beta |u_0|^\alpha |v_0|^{\beta-2}v_0 ~&\hbox{in}~\R^N.
\end{cases}
\]
\hfill$\Box$

{
\begin{lemma}\label{lemma:20251011-1545}
Let $\{\bar{x}\}$ be a concentration point of the sequence $\{(u_n, v_n)\}$. Then the concentration measures defined in \eqref{eq:20231206-wbe2} satisfy the following relations:
$$
\xi_1(\{\bar{x}\}) = \nu \alpha \eta_3(\{\bar{x}\}), \quad \xi_2(\{\bar{x}\}) = \nu \beta \eta_3(\{\bar{x}\}).
$$
\end{lemma}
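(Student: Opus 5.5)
We prove the relations with the classical cut-off argument from concentration-compactness, testing the approximate equations \eqref{eq:20231206-xe8} against localized copies of $u_n$ and $v_n$. Fix a cut-off $\varphi\in C_c^\infty(\R^N)$ with $0\le\varphi\le1$, $\varphi\equiv1$ on $B_1(0)$ and $\mathrm{supp}\,\varphi\subset B_2(0)$. Set $\varphi_\varepsilon(x):=\varphi((x-\bar{x})/\varepsilon)$. Since $\{u_n\}$ is bounded in $H^1$, the sequence $\{\varphi_\varepsilon u_n\}$ is bounded in $H^1$. We test the first equation of \eqref{eq:20231206-xe8} with $\varphi_\varepsilon u_n$, which gives
\[
\int_{\R^N}|\nabla u_n|^2\varphi_\varepsilon+\int_{\R^N}u_n\nabla u_n\cdot\nabla\varphi_\varepsilon+\lambda_{1,n}\int_{\R^N}u_n^2\varphi_\varepsilon
=\mu_1\int_{\R^N}|u_n|^p\varphi_\varepsilon+\nu\alpha\int_{\R^N}|u_n|^\alpha|v_n|^\beta\varphi_\varepsilon+o_n(1).
\]
The second equation, tested with $\varphi_\varepsilon v_n$, gives the analogous identity.

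Next we pass to the limit $n\to\infty$ in each term. The $L^2$ term and the $L^p$ term converge to the corresponding integrals for $u_0$; this uses local compactness $H^1_{loc}\hookrightarrow\hookrightarrow L^s_{loc}$ for $s<2^*$ together with the boundedness of $\lambda_{1,n}$. The mixed gradient term tends to $\int u_0\nabla u_0\cdot\nabla\varphi_\varepsilon$, because $u_n\to u_0$ strongly in $L^2(B_{2\varepsilon}(\bar x))$ while $\nabla u_n\rightharpoonup\nabla u_0$ weakly. For the gradient and coupling terms we use Brezis--Lieb type splittings (the $\limsup$ decompositions in \eqref{eq:20231206-wbe6}, localized). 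These give
\[
\int|\nabla u_n|^2\varphi_\varepsilon\to\int|\nabla u_0|^2\varphi_\varepsilon+\int\varphi_\varepsilon\,d\xi_1,
\qquad
\int|u_n|^\alpha|v_n|^\beta\varphi_\varepsilon\to\int|u_0|^\alpha|v_0|^\beta\varphi_\varepsilon+\int\varphi_\varepsilon\,d\eta_3 .
\]
The gradient splitting holds since $\nabla(u_n-u_0)\rightharpoonup0$. For the coupling term, the Brezis--Lieb lemma for products $|u|^\alpha|v|^\beta$ applies because $u_n\to u_0$ and $v_n\to v_0$ a.e. by local compactness, and the sequences are bounded in $L^{2^*}$.

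Then we let $\varepsilon\to0^+$. Every term involving only $u_0,v_0$ tends to $0$ by absolute continuity of the integral, since $|B_{2\varepsilon}|\to0$. The mixed term requires a short estimate: by Hölder,
\[
\Bigl|\int u_0\nabla u_0\cdot\nabla\varphi_\varepsilon\Bigr|\le\|\nabla u_0\|_{L^2(B_{2\varepsilon})}\,\|u_0\|_{L^{2^*}(B_{2\varepsilon})}\,\|\nabla\varphi_\varepsilon\|_{N},
\]
and $\|\nabla\varphi_\varepsilon\|_N=\|\nabla\varphi\|_N$ is scale invariant, so this term also tends to $0$. Since $\int\varphi_\varepsilon\,d\xi_1\to\xi_1(\{\bar x\})$ and $\int\varphi_\varepsilon\,d\eta_3\to\eta_3(\{\bar x\})$, we obtain $\xi_1(\{\bar x\})=\nu\alpha\,\eta_3(\{\bar x\})$. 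The identity $\xi_2(\{\bar x\})=\nu\beta\,\eta_3(\{\bar x\})$ follows in the same way from the second equation.

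The main obstacle is the order of limits in the mixed gradient term. The naive bound $\|u_n\|_{L^2(B_{2\varepsilon})}\varepsilon^{-1}$ is useless uniformly in $n$, so one must first send $n\to\infty$ using strong local $L^2$ convergence, and only then use the scale-invariant $L^{2^*}$–$L^N$ Hölder estimate. A second point of care is that the $\lambda_{1,n}$ term must be handled before $\varepsilon\to0$ for the same reason. Where measures appear in \eqref{eq:20231206-wbe6}, we understand them as the weak limits of $|\nabla u_n|^2$ minus $|\nabla u_0|^2$, which coincide with $\xi_1$ by the weak convergence of the cross term $2\nabla u_0\cdot\nabla(u_n-u_0)\rightharpoonup0$.
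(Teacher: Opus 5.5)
Your proposal is correct and follows the paper's route. You test the first (resp.\ second) equation of \eqref{eq:20231206-xe8} with $\varphi_\varepsilon u_n$ (resp.\ $\varphi_\varepsilon v_n$), let $n\to\infty$, and then let $\varepsilon\to0^+$. Your Brezis--Lieb splitting, which is needed because $\xi_1$ and $\eta_3$ are defined through $u_n-u_0$ and $v_n-v_0$, and your scale-invariant H\"older bound on $\int u_0\nabla u_0\cdot\nabla\varphi_\varepsilon$ make explicit the steps the paper only sketches.
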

}

\begin{proof}
{We provide a detailed derivation for the first equality $\xi_1(\{\bar{x}\}) = \nu \alpha \eta_3(\{\bar{x}\})$; the second follows analogously.}

{Since $(u_n, v_n)$ is a bounded Palais-Smale sequence, it converges weakly to $(u_0, v_0)$ in $H^1(\mathbb{R}^N)$. For any $\varepsilon > 0$, let $\varphi_{\varepsilon} \in C_c^{\infty}(\mathbb{R}^N)$ be a cut-off function such that:
\begin{itemize}
    \item $0 \leq \varphi_{\varepsilon} \leq 1$,
    \item $\varphi_{\varepsilon} \equiv 1$ on $B_{\varepsilon}(\bar{x})$,
    \item $\text{supp}(\varphi_{\varepsilon}) \subset B_{2\varepsilon}(\bar{x})$.
\end{itemize}
}

{
We use $\varphi_{\varepsilon} u_n$ as a test function in the weak formulation of the first equation in the system:
$$
-\Delta u_n + \lambda_{1,n} u_n = \mu_1 |u_n|^{p-2}u_n + \nu \alpha |u_n|^{\alpha-2}|v_n|^{\beta} u_n.
$$
Multiplying by $\varphi_{\varepsilon} u_n$ and integrating over $\mathbb{R}^N$ yields:
$$
\int_{\mathbb{R}^N} \nabla u_n \cdot \nabla (\varphi_{\varepsilon} u_n)  \mathrm{d}x + \lambda_{1,n} \int_{\mathbb{R}^N} \varphi_{\varepsilon} |u_n|^2  \mathrm{d}x = \mu_1 \int_{\mathbb{R}^N} \varphi_{\varepsilon} |u_n|^p  \mathrm{d}x + \nu \alpha \int_{\mathbb{R}^N} \varphi_{\varepsilon} |u_n|^{\alpha} |v_n|^{\beta}  \mathrm{d}x.
$$
}

{After applying integration by parts and rearranging terms, we obtain:
$$
\int_{\mathbb{R}^N} \varphi_{\varepsilon} |\nabla u_n|^2  \mathrm{d}x = - \int_{\mathbb{R}^N} u_n \nabla u_n \cdot \nabla \varphi_{\varepsilon}  \mathrm{d}x + \text{(terms involving } \lambda_{1,n}, \mu_1, \nu\text{)}.
$$
}

{We now pass to the limits in the following order:
\begin{enumerate}
    \item First, let $n \to \infty$: Since $\varphi_{\varepsilon}$ has compact support and the sequence converges weakly, the terms involving $\lambda_{1,n}$ and $\mu_1$ will converge to their corresponding limits. Most importantly, the term $\nu \alpha \int \varphi_{\varepsilon} |u_n|^{\alpha} |v_n|^{\beta}  \mathrm{d}x$ will converge to $\nu \alpha \eta_3(\varphi_{\varepsilon})$, which is the value of the measure $\eta_3$ against the function $\varphi_{\varepsilon}$.
    \item Then, let $\varepsilon \to 0^+$: By the definition of concentration measures, we have:
    $$
    \lim_{\varepsilon \to 0^+} \lim_{n \to \infty} \int_{\mathbb{R}^N} \varphi_{\varepsilon} |\nabla u_n|^2  \mathrm{d}x = \xi_1(\{\bar{x}\}),
    $$
   $$
    \lim_{\varepsilon \to 0^+} \lim_{n \to \infty} \nu \alpha \int_{\mathbb{R}^N} \varphi_{\varepsilon} |u_n|^{\alpha} |v_n|^{\beta}  \mathrm{d}x = \nu \alpha \eta_3(\{\bar{x}\}).
    $$
    By the H\"older inequality and $u_n\rightarrow u_0$ in $L_{\mathrm{loc}}^{t}(\mathbb{R}^N), 1\leq t<2^*$, noting that $2<2^*,p<2^*$,
    one can apply the absolute continuity of integrals to shown that the other terms, including $- \int u_n \nabla u_n \cdot \nabla \varphi_{\varepsilon} \mathrm{d}x$, vanish in the double limit.  This is because the gradient $\nabla \varphi_{\varepsilon}$ is supported on the annulus $B_{2\varepsilon}(\bar{x}) \setminus B_{\varepsilon}(\bar{x})$, where the concentration of energy (or mass) of the sequence is negligible.
\end{enumerate}
}

{Therefore, in the double limit, we arrive at the desired relation:
$$
\xi_1(\{\bar{x}\}) = \nu \alpha \eta_3(\{\bar{x}\}).
$$
The relation $\xi_2(\{\bar{x}\}) = \nu \beta \eta_3(\{\bar{x}\})$ is derived similarly by testing the second equation with $\varphi_{\varepsilon} v_n$.
}
\end{proof}

\br\lab{remark:20231207-r1}
One can also compose the relations \eqref{eq:20231206-wbe4},\eqref{eq:20231206-wbe5} and \eqref{eq:20231207-e1} to deduce that: the energy will contribute at least $\frac{2}{N-2} \nu^{-\frac{N-2}{2}} \alpha^{-\frac{(N-2)\alpha}{4}} \beta^{-\frac{(N-2)\beta}{4}} S^{\frac{N}{2}}$ for any concentration point. {Indeed, suppose that $\bar{x}\in \mathbb{R}^N$  is a concentration point, we denote
$$\begin{cases}
\bar{\xi}_1:=\lim_{\varepsilon\rightarrow 0^+}\lim_{n\rightarrow \infty} \int_{B_\varepsilon(\bar{x})}|\nabla u_n|^2 \mathrm{d}x,\\
\bar{\xi}_2:=\lim_{\varepsilon\rightarrow 0^+}\lim_{n\rightarrow \infty} \int_{B_\varepsilon(\bar{x})}|\nabla v_n|^2 \mathrm{d}x,\\
\bar{\eta}_1:=\lim_{\varepsilon\rightarrow 0^+}\lim_{n\rightarrow \infty} \int_{B_\varepsilon(\bar{x})}|u_n|^{2^*} \mathrm{d}x,\\
\bar{\eta}_2:=\lim_{\varepsilon\rightarrow 0^+}\lim_{n\rightarrow \infty} \int_{B_\varepsilon(\bar{x})}|v_n|^{2^*} \mathrm{d}x,\\
\bar{\eta}_3:=\lim_{\varepsilon\rightarrow 0^+}\lim_{n\rightarrow \infty} \int_{B_\varepsilon(\bar{x})}|u_n|^{\alpha}|v_n|^{\beta} \mathrm{d}x.
\end{cases}$$
Then similar to \eqref{eq:20231206-wbe4},\eqref{eq:20231206-wbe5} and \eqref{eq:20231207-e1}, we have that
$$\begin{cases}
\bar{\eta}_{i}^{\frac{2}{2^*}}\leq S^{-1} \bar{\xi}_i, i=1,2,\\
\bar{\eta}_3\leq \bar{\eta}_{1}^{\frac{\alpha}{2^*}} \bar{\eta}_{2}^{\frac{\beta}{2^*}} ,\\
\bar{\xi}_1=\nu \alpha \bar{\eta}_3, \bar{\xi}_2=\nu \beta \bar{\eta}_3,
\end{cases}$$
 which gives that
 $$\bar{\eta}_3\geq \nu^{-\frac{N}{2}}\alpha^{\frac{(N-2)\beta-2N}{4}} \beta^{-\frac{(N-2)\beta}{4}}S^{\frac{N}{2}}~\hbox{when $\bar{\eta}_3\neq 0$}.$$
 Noting that $\alpha+\beta=2^*$, we obtain that
 $$\bar{\eta}_3\geq \nu^{-\frac{N}{2}}\alpha^{-\frac{(N-2)\alpha}{4}} \beta^{-\frac{(N-2)\beta}{4}}S^{\frac{N}{2}}.$$
 Hence, the energy contribution at $\bar{x}$ is given by
 \begin{align*}
 &\lim_{\varepsilon\rightarrow 0^+} \lim_{n\rightarrow \infty}\int_{B_\varepsilon(\bar{x})} \left[\frac{1}{2}\left(|\nabla u_n|^2+|\nabla v_n|^2\right)-\left(\frac{1}{p}\mu_1|u_n|^p+\frac{1}{q}\mu_2|v_n|^q+\nu |u_n|^\alpha |v_n|^\beta\right)\right]\mathrm{d}x\\
 =&\frac{1}{2}\left(\bar{\xi}_1+\bar{\xi}_2\right)-\nu \bar{\eta}_3=\frac{2}{N-2}\nu\bar{\eta}_3\\
 \geq& \frac{2}{N-2}\nu^{-\frac{N-2}{2}}\alpha^{-\frac{(N-2)\alpha}{4}} \beta^{-\frac{(N-2)\beta}{4}}S^{\frac{N}{2}}.
 \end{align*}}
 This property implies that the concentration points are at most finite due to the fact $J(u_n,v_n)\rightarrow m_\nu(a_1,b_1)$ is finite. So, since $u_n,v_n$ are radial functions, one can see that the concentration {can only} happen at $0$. Hence, one can also write $\xi_i=c_i\delta_0, \eta_j=d_j\delta_0, i=1,2, j=1,2,3$ in the proof above, where $\delta_0$ denotes the Dirac measure at $0$.
\er

\subsection{Signs of the Lagrange multipliers and Proof of Theorem \ref{th:sign-multipliers}}
For the case of $N=3,4$, the conclusion is implied by the Liouville's type result. Indeed, since $u_0$ and $v_0$ are positive, if $\lambda_1\leq 0$, then $u_0$ satisfies
$$-\Delta u_0\geq -\Delta u_0 +\lambda_1 u_0=\mu_1 u_{0}^{p-1}+\nu \alpha u_{0}^{\alpha-1}v_{0}^{\beta}\geq 0~\hbox{in}~\R^N,$$
a contradiction to \cite[Lemma A.2]{Ikoma2014}. Hence, $\lambda_1>0$. Similarly, we can prove that $\lambda_2>0$.

Next, we shall prove Theorem \ref{th:sign-multipliers}-\ref{th-5-c2}, which is focused on $N\geq 5$.

Denote
\[
B(a,b):=\left\{(u,v)\in H^1(\R^N,\R^2): \|u\|_2^2\leq a, \|v\|_2^2\leq b\right\}
\]
and $B_{\mathrm{rad}}(a,b):=B(a,b)\cap H_{\mathrm{rad}}^{1}(\R^N,\R^2)$. We set
\[
\tilde{A}_{\rho_0}^{a,b}:=\left\{(u,v)\in B(a,b):\|\nabla u\|_2^2+\|\nabla v\|_2^2<\rho_0\right\},
\]
\[
\tilde{B}_{\rho_0}^{a,b}:=\left\{(u,v)\in B(a,b):\rho_0\leq \|\nabla u\|_2^2+\|\nabla v\|_2^2\leq 2\rho_0\right\}
\]
and
\beq\lab{eq:20240219-e4}
\tilde{m}_\nu(a,b):=\inf_{\tilde{A}_{\rho_0}^{a,b}}J(u,v).
\eeq
Then recalling \eqref{eq:20240217-xe3}, one can see that $k_0>0$ is independent of the choices of $(a_1,b_1)\in [0,a]\times [0,b]\backslash\{(0,0)\}$ and $\nu\in (0,\nu_0)$. So we still have that
\[
\tilde{m}(a_1,b_1)=\inf_{\tilde{A}_{\rho_0}^{a_1,b_1}}J(u,v)<0<k_0\leq \inf_{\tilde{B}_{\rho_0}^{{a_1,b_1}}}J(u,v).
\]
Thus we also have that
\[
\tilde{m}_\nu(a_1,b_1):=\inf_{\tilde{A}_{\rho_0}^{a_1,b_1}}J(u,v)=\inf_{\tilde{A}_{2\rho_0}^{a_1,b_1}}J(u,v).
\]

\bl\lab{lemma:20240219-l1}
Let $\nu_0,\rho_0$ be given in Lemma \ref{lemma:20231206-l1}, then $\tilde{m}_\nu(a_1,b_1)=m_\nu(a_1,b_1)$ for all $(a_1,b_1)\in [0,a]\times [0,b]\backslash\{(0,0)\}$ and $\nu\in (0,\nu_0)$.
\el
\bp
{When $a_1 = 0$ or $b_1 = 0$, this result is well-established; see \cite[Lemma 3.2-(ii)]{JeanZhangZhong2024b}.} So we only need to consider that $a_1\neq 0,b_1\neq 0$.
It is trivial that $\tilde{m}_\nu(a_1,b_1)\leq m_\nu(a_1,b_1)$ since $A_{\rho_0}^{a_1,b_1}\subset \tilde{A}_{\rho_0}^{a_1,b_1}$.
If $\tilde{m}_\nu(a_1,b_1)\neq m_\nu(a_1,b_1)$, then $\tilde{m}_\nu(a_1,b_1)<m_\nu(a_1,b_1)$. So, by the definition in \eqref{eq:20240219-e4}, we can take some $(u,v)\in \tilde{A}_{\rho_0}^{a_1,b_1}$ such that $J(u,v)<m_\nu(a_1,b_1)$. Then $(u,v)\not\in T(a_1,b_1)$.
Let $a_2:=\|u\|_2^2, b_2:=\|v\|_2^2$. Then $a_2\leq a_1, b_2\leq b_1$ with $(a_1-a_2, b_1-b_2)\neq (0,0)$. And
$$m_\nu(a_2, b_2)\leq J(u,v)<m_\nu(a_1,b_1),$$
which is a contradiction to Remark \ref{remark:20231206-r2}.
\ep

\bl\lab{lemma:20240219-l2}
Let $(u_0,v_0,\lambda_1,\lambda_2)$ be given by Theorem \ref{th:20231206-t1}, then $\lambda_1 a_1+\lambda_2 b_1>0$. And thus, at least one of $\lambda_1,\lambda_2$ is positive.
\el
\bp
Since $(u_0,v_0,\lambda_1,\lambda_2)$ solves Eq.\eqref{eq:20240217-xe1}, by $P(u_0,v_0)=0$, a direct computation leads to that
\begin{align*}
&\lambda_1 a_1+\lambda_2 b_1=\lambda_1\|u_0\|_2^2+\lambda_2\|v_0\|_2^2\\
=&\mu_1\|u_0\|_p^p+\mu_2\|v_0\|_q^q+\left(2^*\nu\int_{\R^N}u_{0}^\alpha v_{0}^\beta \ud x-\|\nabla u_0\|_2^2-\|\nabla v_0\|_2^2\right)\\
=&\mu_1\|u_0\|_p^p+\mu_2\|v_0\|_q^q-\left(\frac{(p-2)N}{2p}\mu_1\|u_0\|_p^p+\frac{(q-2)N}{2q}\mu_2\|v_0\|_q^q\right)\\
=&\frac{2N-(N-2)p}{2p}\mu_1\|u_0\|_p^p+\frac{2N-(N-2)q}{2q}\mu_2\|v_0\|_q^q\\
>&0,
\end{align*}
where we have used the fact  that   $u_0,v_0$ are positive functions.
\ep

Now, we are ready to give the\\
{\bf Proof of Theorem \ref{th:sign-multipliers}-(ii):} {Based} on Lemma \ref{lemma:20240219-l2}, we may assume that $\lambda_1>0$ without loss of generality. Next, we shall prove that $\lambda_2\geq 0$. {Since $(u_0,v_0)$ is a minimizer of $m_\nu(a_1,b_1)$, we have $\|u_0\|_2^2 = a_1$, $\|v_0\|_2^2 = b_1$, and $\|\nabla u_0\|_2^2 + \|\nabla v_0\|_2^2 < \rho$. For $0 \leq s < 1$, the curve $(u_0, (1-s)v_0)$ satisfies:
    \begin{itemize}
        \item $\|u_0\|_2^2 = a_1 \leq a_1$,
        \item $\|(1-s)v_0\|_2^2 = (1-s)^2\|v_0\|_2^2 \leq b_1$,
        \item $\|\nabla u_0\|_2^2 + \|\nabla ((1-s)v_0)\|_2^2 = \|\nabla u_0\|_2^2 + (1-s)^2\|\nabla v_0\|_2^2 \leq \|\nabla u_0\|_2^2 + \|\nabla v_0\|_2^2 < \rho$.
    \end{itemize}
    Therefore, $(u_0, (1-s)v_0) \in \tilde{A}_{\rho}^{a_1,b_1}$ for all $0 \leq s < 1$.}
By a direct computation,
\begin{align*}
\frac{\mathrm{d}}{\mathrm{d}s}J(u_0,(1-s)v_0)\Big|_{s=0}=&-\left(\|\nabla v_0\|_2^2-\mu_2\|v_0\|_q^q-\nu\beta\int_{\R^N}u_{0}^{\alpha} v_{0}^{\beta} \ud x\right)\\
=&\lambda_2 \|v_0\|_2^2=\lambda_2 b_1.
\end{align*}
So, if $\lambda_2<0$, then $\frac{\mathrm{d}}{\mathrm{d}s}J(u_0,(1-s)v_0)\Big|_{s=0}<0$, which implies that there exists some $s>0$ small enough such that
\beq\lab{eq:20240219-e7}
J(u_0,(1-s)v_0)<J(u_0,v_0)=m_\nu(a_1,b_1)=\tilde{m}_\nu(a_1,b_1).
\eeq
On the other hand, since $\{(u_0,(1-s)v_0): 0\leq s<1\}$ is a curve in $\tilde{A}_{\rho}^{a_1,b_1}$, by the definition of $\tilde{m}_{a_1,b_1}$,
we have that
$$\tilde{m}_\nu(a_1,b_1)\leq J(u_0,(1-s)v_0),$$
which is a contradiction to \eqref{eq:20240219-e7}. Furthermore, if $p\leq \frac{2N-2}{N-2}$, we {can also} apply the {Liouville} type result to conclude that $\lambda_1>0$. Similarly, if $q\leq \frac{2N-2}{{N-2}}$, then $\lambda_2>0$. We refer to \cite[Theorem 2.1]{Armstrong2011} or \cite[Theorem 8.4]{QuittnerSouplet.2007}.
{\hfill$\Box$}

\subsection{Ground state and Proof of Theorem \ref{th:20240209-t1}}
{
Firstly, we give the following lemma.
\bl\lab{lemma:20231207-zl1}
Let $A,B,C,D>0$, $s_1,s_2\in(0,2), s_3\in(2,+\infty)$ and put $f(t):=At^2-B t^{s_1}-C t^{s_2}-D t^{s_3}, t>0$. Then $f'(t)=0$ has at most two positive roots. Furthermore, if $f'(1)=0$ with $f(1)>0$, then $$f(1)=\max_{t>0}f(t).$$
\el }
\bp
~{Without loss of generality, we may assume that $s_2\leq s_1$. Let $$g(t):=2A-B s_1 t^{s_1-2} -C s_2 t^{s_2-2}-D s_3 t^{s_3-2}$$
and
$$h(t):=-Bs_1(s_1-2)-C s_2 (s_2-2)t^{s_2-s_1}-Ds_3 (s_3-2)t^{s_3-s_1}.$$
By a direct computation, one can see that
$$f'(t)=2A t -B s_1 t^{s_1-1} -C s_2 t^{s_2-1}-D s_3 t^{s_3-1}=tg(t)$$
and
$$g'(t)=-Bs_1(s_1-2)t^{s_1-3}-C s_2 (s_2-2)t^{s_2-3} -Ds_3 (s_3-2)t^{s_3-3}=t^{s_1-3}h(t).$$
So, a positive $t$ solves $f'(t)=0$ if and only if $g(t)=0$. Similarly, a positive $t$ solves $g'(t)=0$ if and only if $h(t)=0$.}

{
Now, by $h'(t)=-C s_2 (s_2-2)(s_2-s_1)t^{s_2-s_1-1}-Ds_3 (s_3-2)(s_3-s_1)t^{s_3-s_1-1}$ and $s_2\leq s_1<2<s_3$, we see that $h'(t)<0$ in $\R^+$. On the other hand, $\lim_{t\rightarrow 0^+}h(t)=+\infty>0$  if $s_2<s_1$ and $\lim_{t\rightarrow 0^+}h(t)=-(B+C)s(s-2)>0$ if $s_2=s_1=:s$. Hence, we conclude that $h(t)=0$ has exactly one positive root. That is, $g'(t)=0$ has exactly one positive root. By Rolle's Theorem, $g(t)=0$ has at most two positive roots.}

{On the other hand, by $f(0)=0$ and $s_1<2,s_2<2$, it is easy to see that $f(t)<0$ for $t>0$ small enough. So by $f(1)>0$, there exists some $t^*\in (0,1)$ such that $f(t^*)=\min_{t\in (0,1)}f(t)<0$. So, $f'(t)=0$ has exactly two positive roots $t=t^*$ and $t=1$. Hence, by $s_3>2$, we conclude that $f(1)=\max_{t>0}f(t)$.}
\ep

{
\br\lab{remark:20240109-r1}
For fixed $B,C,D>0$, by $s_1,s_2<2$ and $s_3>2$, one can check that $g(t)<0~ (\forall t>0)$ provided $A>0$ small enough. Then it follows that $f(t)$ decreases in $[0,+\infty)$ and $f(t)=0$ has no positive root. However, once $g(t)=0$ has a positive root $t^*>0$ with $f(t^*)>0$, then by the proof of Lemma \ref{lemma:20231207-zl1} above, we can conclude that
$f(t)$ has exactly two positive critical points. One is a local minimizer with negative value, the other one is the global maximum with positive value.
\er
}

{
\bc\lab{cro:20240216-c1}
For $0\neq u\in H^1(\R^N), 0\neq v\in H^1(\R^N)$, if $P(u,v)=0$ with $J(u,v)>0$, then
$$J(u,v)=\max_{t>0}J(t\star u, t\star v).$$
\ec }
\bp
~{This is a direct conclusion of Lemma \ref{lemma:20231207-zl1} by taking $f(t):=J(t\star u, t\star v)$.}
\ep

{\textbf{Proof of  Theorem \ref{th:20240209-t1}:}}
Let $(a_1,b_1)\in (0,a]\times (0,b]$, $\nu\in(0,\nu_0]$ and $(u_\nu,v_\nu)$ be a local minimizer given by Theorem  \ref{th:20231206-t1}. Suppose that there is a solution $(\bar{u}_\nu, \bar{v}_\nu)$ such that $J(\bar{u}_\nu, \bar{v}_\nu)<m_\nu(a_1,b_1)$.
{By Lemma \ref{lemma:20231207-zl1}}, one can see the equation $\frac{\ud}{\ud t}J(t\star \bar{u}_\nu, t\star \bar{v}_\nu)=0$ has exactly two positive roots $0<t_1<t_2$ such that $J(t\star \bar{u}_\nu, t\star \bar{v}_\nu)$ decreases in $t\in (0,t_1)$ and increases in $t\in (t_1, t_2)$ . Since $(\bar{u}_\nu,\bar{v}_\nu)$ is a solution, we have that $t_1=1$ or $t_2=1$. In either case, it is easy to deduce that $J(t\star \bar{u}_\nu, t\star \bar{v}_\nu)<0$ for $0<t\leq 1$.
 So, recalling Lemma \ref{lemma:20231206-l1}, we conclude that
 $$\{(t\star \bar{u}_\nu, t\star \bar{v}_\nu): 0<t\leq 1\}\cap B_{\rho_0}^{a_1,b_1}=\emptyset.$$
 Thus, $\|\nabla \bar{u}_\nu\|_2^2+\|\nabla \bar{v}_\nu\|_2^2<\rho_0$ and $(\bar{u}_\nu, \bar{v}_\nu)\in A_{\rho_0}^{a_1,b_1}$. Then it follows that
$$m_\nu(a_1,b_1)=\inf_{(u,v)\in A_{\rho_0}^{a_1,b_1}}J(u,v)\leq J(\bar{u}_\nu, \bar{v}_\nu)<m_\nu(a_1,b_1),$$
which is a contradiction.
\hfill$\Box$

\subsection{Asymptotic behavior and Proof of Theorem \ref{th:20240219-t1}}
To distinguish, we prefer to rewrite $J,P$ by $J_\nu, P_\nu$ in this subsection.

For any sequence $\nu_n\rightarrow 0^+$, we rewrite $(u_{\nu_n}^{a_1,b_1}, v_{\nu_n}^{a_1,b_1}, \lambda_{\nu_n,1}^{a_1,b_1},\lambda_{\nu_n,2}^{a_1,b_1})$  by $(u_n,v_n,\lambda_{n,1},\lambda_{n,2})$ for simplicity. If $a_1=0$ or $b_1=0$, the system reduce to a single equation, and it is known that the positive normalized solution is unique up to a translation. And thus the conclusion is trivial when $a_1=0$ or $b_1=0$.
Next, we assume that $a_1\in (0,a], b_1\in (0,b]$.
Noting that $\|\nabla u_n\|_2^2+\|\nabla v_n\|_2^2<\rho_0$ and $\{(u_n,v_n)\}\subset T_{rad}(a_1,b_1)$, where $\rho_0>0$ is {the} large number given by Lemma \ref{lemma:20231206-l1}, we see that $\{(u_n,v_n)\}$ is bounded in $H^1(\R^N,\R^2)$. Consequently, we obtain that $\{\lambda_{n,1}\}, \{\lambda_{n,2}\}$ are bounded. Going up to a subsequence if necessary, we assume that $(u_n,v_n)\rightharpoonup (\tilde{u}, \tilde{v})$ in $H^1(\R^N,\R^2)$ and $\lambda_{n,1}\rightarrow \tilde{\lambda}_1, \lambda_{n,2}\rightarrow \tilde{\lambda}_2$.

Recalling Lemma \ref{lemma:20231206-l5}, we have that
\[
m_{\nu_n}(a_1,b_1)<m_{\nu_n}(a_1,0)+m_{\nu_n}(0,b_1)=J_0(w_{p,\mu_1,a_1},0)+J_0(0,w_{q,\mu_2,b_1})<0~\forall~n\in \N.
\]
By the same reason as that {stated} in Remark \ref{remark:20231207-r1}, $|\nabla u_n|^2, |\nabla v_n|^2$ {can only} concentrate at $0$. However, by $\nu_n\rightarrow 0$, {As in the proof of  Lemma \ref{lemma:20251011-1545}}, we can exclude the concentration. Hence, $(u_n,v_n)\rightarrow (\tilde{u}, \tilde{v})$ in $D_{0}^{1,2}(\R^N,\R^2)$ and thus
\beq\lab{eq:20240219-be4}
J_0(\tilde{u}, \tilde{v})=\lim_{n\rightarrow \infty}J_{\nu_n}(u_n,v_n)\leq J_0(w_{p,\mu_1,a_1},0)+J_0(0,w_{q,\mu_2,b_1}).
\eeq

Put $a_2:=\|\tilde{u}\|_2^2, b_2:=\|\tilde{v}\|_2^2$, noting that
\begin{equation*}
\begin{cases}
-\Delta \tilde{u}+\lambda_1 \tilde{u}=\mu_1 \tilde{u}^{p-1}~\hbox{in}~\R^N,\\
0\leq \tilde{u}\in H_{\mathrm{rad}}^{1}(\R^N), \|\tilde{u}\|_2^2=a_2,
\end{cases}
\end{equation*}
and
\begin{equation*}
\begin{cases}
-\Delta \tilde{v}+\lambda_2 \tilde{v}=\mu_2 \tilde{v}^{q-1}~\hbox{in}~\R^N,\\
0\leq \tilde{v}\in H_{\mathrm{rad}}^{1}(\R^N), \|\tilde{v}\|_2^2=b_2,
\end{cases}
\end{equation*}
we conclude that $(\tilde{u},\tilde{v})=(w_{p,\mu_1,a_2}, w_{q,\mu_2,b_2})$, here we admit that $w_{p,\mu,a}=0$ if $a=0$.
If $0\leq a_2<a_1$, then $J_0(w_{p,\mu_1,a_1},0)<J_0(w_{p,\mu_1,a_2},0)\leq 0$. Also, $J_0(0,w_{q,\mu_2,b_1})<J_0(0,w_{q,\mu_2,b_2})\leq 0$ if $0\leq b_2<b_1$. Hence,
if $(a_2,b_2)\neq (a_1,b_1)$, then
$$J_0(w_{p,\mu_1,a_1},0)+J_0(0,w_{q,\mu_2,b_1})<J_0(w_{p,\mu_1,a_2},0)+J_0(0,w_{q,\mu_2,b_2})=J_0(\tilde{u},\tilde{v}),$$
which is a contradiction to \eqref{eq:20240219-be4}.
So, $(a_2,b_2)=(a_1,b_1)$ and $(u_n,v_n)\rightarrow (\tilde{u},\tilde{v})$ in $L^2(\R^N,\R^2)$. Hence, $(u_n,v_n)\rightarrow (\tilde{u},\tilde{v})=(w_{p,\mu_1,a_2}, w_{q,\mu_2,b_2})=(w_{p,\mu_1,a_1}, w_{q,\mu_2,b_1})$ in $H^1(\R^N,\R^2)$.\hfill$\Box$

\subsection{Uniqueness, Continuity and Proof of Theorem \ref{th:20240219-xbt1}}
Firstly, we give the \\
{\bf Proof of Theorem \ref{th:20240219-xbt1}-\ref{th-9-p1}:}
We argue in a negation way.
Let $\varepsilon\in (0, \min\{a,b\})$ be fixed and suppose that there exists a sequence $\{(a_n,b_n,\nu_n)\}\subset [\varepsilon,a]\times [\varepsilon,b]\times (0,\nu_0)$ with $\nu_n\rightarrow 0$ such that for any $n$, there exists two different ground state solution $(u_{n}^{(i)},v_{n}^{(i)}, \lambda_{n,1}^{(i)},\lambda_{n,2}^{(i)}), i=1,2$.
Going to a subsequence, we may assume that $(a_n,b_n)\rightarrow (\bar{a},\bar{b})\in [\varepsilon,a]\times [\varepsilon,b]$. By applying a similar argument as the proof of Theorem \ref{th:20240219-t1}, we can prove that
\[
(u_{n}^{(i)},v_{n}^{(i)})\rightarrow (w_{p,\mu_1,\bar{a}}, w_{q,\mu_2,\bar{b}})~\hbox{as}~n\rightarrow \infty ~\hbox{in}~H^1(\R^N,\R^2), i=1,2.
\]
Then applying a similar argument as \cite[Proof of Theorem 2.5]{Bartsch2021}, we can conclude the existence of $\nu_\varepsilon$. Since this kind of argument is standard now, we omit the details here. For this kind argument for the scale case, we also refer to \cite[Proof of Theorem 5.1]{JeanZhangZhong2024}.

\bl\lab{eq:20240219-hbl1}
$m_\nu(a_1,b_1)$ is continuous in $[0,a]\times [0,b]\backslash\{(0,0)\}$. That is, for any $(a_n,b_n)\rightarrow (\bar{a},\bar{b})\in [0,a]\times [0,b]\backslash\{(0,0)\}$, it holds that
\beq\lab{eq:20240219-xtte1}
\lim_{n\rightarrow \infty}m_\nu(a_n,b_n)=m_\nu(\bar{a},\bar{b}).
\eeq
\el
\bp
~Let $(\bar{a},\bar{b})\in [0,a]\times [0,b]\backslash\{(0,0)\}$. If $\bar{a}=0,\bar{b}\neq 0$ and $\bar{a}_n\equiv 0$, then
$$m_\nu(a_n,b_n)=J(0, w_{q,\mu_2,b_n})\rightarrow J(0,{w_{q,\mu_2,\bar{b}}})=m_\nu(\bar{a},\bar{b}),$$
which is just the result \eqref{eq:20240219-xtte1}. Similarly, if $\bar{a}\neq 0, \bar{b}=0$ and $\bar{b}_n\equiv 0$, \eqref{eq:20240219-xtte1} is also true.

Hence, in the following, without loss of generality, we can always assume that $a_n\neq 0, b_n\neq 0$.

Firstly, we {aim} to prove that
\beq\lab{eq:20240219-xtte2}
m_\nu(\bar{a},\bar{b})\leq \liminf_{n\rightarrow \infty}m_\nu(a_n,b_n).
\eeq
Indeed, by Theorem \ref{th:20231206-t1}, there exists some $(u_n,v_n,\lambda_{n,1},\lambda_{n,2})\in H_{\mathrm{rad}}^{1}(\R^N,\R^2)\times \R^2$ attaining $m_\nu(a_n,b_n)$. Then we know that $\{(u_n,v_n)\}$ is bounded in $H^1(\R^N,\R^2)$ and thus $\{\lambda_{n,1}\}, \{\lambda_{n,2}\}$ are bounded. Up to a subsequence, we assume that $(u_n,v_n)\rightharpoonup (\bar{u},\bar{v})$ in $H^1(\R^N,\R^2)$ and $\lambda_{n,1}\rightarrow \lambda_1,\lambda_{n,2}\rightarrow \lambda_2$.
Then we have that
\begin{align*}
m_\nu(a_n,b_n)=&J(u_n,v_n)\\
=&J(u_n,v_n)-\frac{1}{2^*}P(u_n,v_n)\\
=&\frac{1}{N}\left(\|\nabla u_n\|_2^2+\|\nabla v_n\|_2^2\right)
-\frac{2-(p-2)N}{2}\frac{\mu_1}{p}\|u_n\|_p^p
-\frac{2-(q-2)N}{2}\frac{\mu_2}{q}\|v_n\|_q^q.
\end{align*}
Since $\{u_n\},\{v_n\}$ are radial, by the radial compact embedding $H_{\mathrm{rad}}^{1}(\R^N)\hookrightarrow\hookrightarrow L^\eta(\R^N), \forall \eta\in (2,2^*), N\geq 2$, we conclude that $(u_n,v_n)\rightarrow (\bar{u},\bar{v})$ in $L^p(\R^N)\times L^q(\R^N)$. Thus,
$$\liminf_{n\rightarrow \infty}m_\nu (a_n,b_n)\geq J(\bar{u},\bar{v})\geq m_\nu(\|\bar{u}\|_2^2, \|\bar{v}\|_2^2).$$
 Furthermore, noting that $\|\bar{u}\|_2^2\leq \bar{a}, \|\bar{v}\|_2^2\leq \bar{b}$, by Remark \ref{remark:20231206-r2}, we have  $m_\nu(\|\bar{u}\|_2^2, \|\bar{v}\|_2^2)\geq m_\nu(\bar{a},\bar{b})$.
Hence, we conclude \eqref{eq:20240219-xtte2}.

Secondly, we are aim to prove that
\beq\lab{eq:20240219-xtte3}
\limsup_{n\rightarrow \infty}m_\nu(a_n,b_n)\leq m_\nu(\bar{a},\bar{b}).
\eeq
Indeed, let $(\tilde{u},\tilde{v})$ attain $m_\nu(\bar{a},\bar{b})$. If $\bar{a}\neq 0, \bar{b}\neq 0$,  for any $n\in \N$, we put
$$\tilde{u}_n:=\sqrt{\frac{a_n}{\bar{a}}}\tilde{u}, \tilde{v}_n:=\sqrt{\frac{b_n}{\bar{b}}}\tilde{v}.$$
Then $(\tilde{u}_n,\tilde{v}_n)\in T_{rad}(a_n,b_n)$ and thus $m_\nu(a_n,b_n)\leq J(\tilde{u}_n,\tilde{v}_n)$. Noting that $\sqrt{\frac{a_n}{\bar{a}}}\rightarrow 1, \sqrt{\frac{b_n}{\bar{b}}}\rightarrow 1$, we have that $J(\tilde{u}_n,\tilde{v}_n)\rightarrow J(\tilde{u},\tilde{v})=m_\nu(\bar{a},\bar{b})$.
Hence, \eqref{eq:20240219-xtte3} holds.
If $\bar{a}=0,\bar{b}\neq 0$, then one can see that $(\tilde{u},\tilde{v})=(0, w_{q,\mu_2,\bar{b}})$.
In such a case, we can take some $\phi\in H^1(\R^N)$ with $\|\nabla \phi\|_2^2=1, \|\phi\|_2^2=1$ and put
$$\tilde{u}_n:=\sqrt{a_n}\phi, \tilde{v}_n:=\sqrt{\frac{b_n}{\bar{b}}}\tilde{v}=\sqrt{\frac{b_n}{\bar{b}}}w_{q,\mu_2,\bar{b}}.$$
Then $(\tilde{u}_n,\tilde{v}_n)\in T_{rad}(a_n,b_n)$. In particular, since $a_n\rightarrow 0$ and $\sqrt{\frac{b_n}{\bar{b}}}\rightarrow 1$, by a direct computation,
\[
J(\tilde{u}_n,\tilde{v}_n)=J(0,\tilde{v}_n)+o_n(1)=J(0, w_{q,\mu_2,\bar{b}})+o_n(1)=m_\nu(\bar{a},\bar{b})+o_n(1).
\]
So, we still have \eqref{eq:20240219-xtte3}. Similarly, we can check \eqref{eq:20240219-xtte3} for $\bar{a}\neq 0, \bar{b}=0$.

Hence, combining \eqref{eq:20240219-xtte2} with \eqref{eq:20240219-xtte3}, we obtain \eqref{eq:20240219-xtte1}.
\ep

\br\lab{remark:20240219-xbttr1}
When considering the system for global minimum under the mass sub-critical framework, a similar continuous property is established by Shibata in \cite[Lemma 4.2-(iii)]{Shibata2017}.
\er

Now, we are ready to give the\\
{\bf Proof of Theorem \ref{th:20240219-xbt1}-\ref{th-9-p2}:} Fix $\nu\in (0,\nu_\varepsilon)$ and let $(\bar{a},\bar{b})\in [\varepsilon,a]\times [\varepsilon,b]$. {Let} any sequence $\{(a_n,b_n)\}\subset [\varepsilon,a]\times [\varepsilon,b]$ with $(a_n,b_n)\rightarrow (\bar{a},\bar{b})$. By the conclusion of \ref{th-9-p1}, we see that $(u_{\nu}^{a_n,b_n}, v_{\nu}^{a_n,b_n}, \lambda_{\nu,1}^{a_n,b_n},\lambda_{\nu,2}^{a_n,b_n})$ is the unique ground state solution with
$m_\nu(a_n,b_n)=J_\nu(u_{\nu}^{a_n,b_n}, v_{\nu}^{a_n,b_n})$. Up to a subsequence, we assume that  $(u_{\nu}^{a_n,b_n}, v_{\nu}^{a_n,b_n})\rightharpoonup (\bar{u},\bar{v})$ in $H^1(\R^N,\R^2)$. Then similar to the proof of \eqref{eq:20240219-xtte2} above, we can show that
\beq\lab{eq:20240219-xtte5}
{\liminf}_{n\rightarrow \infty} J_\nu(u_{\nu}^{a_n,b_n}, v_{\nu}^{a_n,b_n}) \geq J_\nu(\bar{u},\bar{v}).
\eeq
Noting that $\|\bar{u}\|_2^2\leq \bar{a}, \|\bar{v}\|_2^2\leq \bar{b}$, by Remark \ref{remark:20231206-r2}, we have
\beq\lab{eq:20240219-xtte6}
m_\nu(\bar{a},\bar{b})\leq m_\nu(\|\bar{u}\|_2^2,\|\bar{v}\|_2^2)\leq J_\nu(\bar{u},\bar{v}).
\eeq
On the other hand, by Lemma \ref{eq:20240219-hbl1}, \eqref{eq:20240219-xtte1} holds.
Then, it follows from \eqref{eq:20240219-xtte5}, \eqref{eq:20240219-xtte6} and \eqref{eq:20240219-xtte1} that
$$m_\nu(\bar{a},\bar{b})=\lim_{n\rightarrow \infty}m_\nu(a_n,b_n)=\lim_{n\rightarrow \infty} J_\nu(u_{\nu}^{a_n,b_n}, v_{\nu}^{a_n,b_n})\geq J_\nu(\bar{u},\bar{v})\geq m_\nu(\|\bar{u}\|_2^2,\|\bar{v}\|_2^2)\geq m_\nu(\bar{a},\bar{b}).$$
Hence, $(\|\bar{u}\|_2^2,\|\bar{v}\|_2^2)=(\bar{a},\bar{b})$ and $J(\bar{u},\bar{v})=m_\nu(\bar{a},\bar{b})$. Then by the uniqueness result given by \ref{th-9-p1}, we obtain that
$$(\bar{u},\bar{v})=(u_{\nu}^{\bar{a},\bar{b}}, v_{\nu}^{\bar{a},\bar{b}}).$$
Thus, we can conclude that $$(u_{\nu}^{a_n,b_n}, v_{\nu}^{a_n,b_n}, \lambda_{\nu,1}^{a_n,b_n},\lambda_{\nu,2}^{a_n,b_n})\rightarrow (u_{\nu}^{\bar{a},\bar{b}}, v_{\nu}^{\bar{a},\bar{b}}, \lambda_{\nu,1}^{\bar{a},\bar{b}},\lambda_{\nu,2}^{\bar{a},\bar{b}})~\hbox{in}~H^1(\R^N,\R^2)\times \R^2.$$
\hfill$\Box$

\s{The existence of the mountain pass solution}\label{sec:3}\label{sec:mountain-pass}

\subsection{Mountain pass geometric structure}
\bl\lab{lemma:20240216-l1}
Let $\nu_0,\rho_0$ and $k_0>0$ be given by Lemma \ref{lemma:20231206-l1}, then there exists some {$0<\bar{\rho}<\rho_0$} small such that for all $(a_1,b_1)\in [0,a]\times [0,b]\backslash\{(0,0)\}$ with $\nu\in (0,\nu_0)$,
\beq\lab{eq:20240216-e1}
\sup_{A_{\bar{\rho}}^{a_1,b_1}}J(u,v)<k_0.
\eeq
\el
\bp
~Noting that
$$J(u,v)\leq \frac{1}{2}\left(\|\nabla u\|_2^2+\|\nabla v\|_2^2\right),$$
we can choose some $0<\bar{\rho}<\min\{\rho_0, 2k_0\}$ such that \eqref{eq:20240216-e1} holds. We remark {that} $\rho_0,k_0$ only depend on $a,b$ but not on  $(a_1,b_1)\in [0,a]\times [0,b]\backslash\{(0,0)\},\nu\in (0,\nu_0)$. Hence, we conclude that \eqref{eq:20240216-e1} is valid for all $(a_1,b_1)\in [0,a]\times [0,b]\backslash\{(0,0)\}$ with $\nu\in (0,\nu_0)$.
\ep

\bl\lab{lemma:20231207-l1}
Under the assumptions of Theorem \ref{th:20240216-t1}, Let $\nu_0,\rho_0$ and $k_0>0$ be given by Lemma \ref{lemma:20231206-l1}, $\bar{\rho}>0$ be given by Lemma \ref{lemma:20240216-l1},
then
\beq\lab{eq:20231207-e2}
M_\nu(a_1,b_1):=\inf_{\gamma\in \Gamma_{\nu}^{a_1,b_1}}\max_{t\in [0,1]}J(\gamma(t))\geq k_0>\sup_{\gamma\in \Gamma_{\nu}^{a_1,b_1}}\max\{J(\gamma(0)), J(\gamma(1))\},
\eeq
where
$$\Gamma_{\nu}^{a_1,b_1}:=\left\{\begin{matrix}\gamma\in C([0,1], T(a_1,b_1)):\gamma(t)=(\gamma_1(t), \gamma_2(t)),\\
 \|\nabla \gamma_1(0)\|_2^2+\|\nabla \gamma_2(0)\|_2^2<\bar{\rho},\\
  J(\gamma(1))<2m_\nu(a_1,b_1)
  \end{matrix}\right\}.$$
\el
\bp
~Firstly, for any $(u,v)\in T(a_1,b_1)$, we note that $t\star(u,v)\in T(a_1,b_1)$ for any $t>0$. By $J(t\star (u,v))\rightarrow 0$ as $t\rightarrow 0^+$ and  $J(t\star (u,v))\rightarrow -\infty$ as $t\rightarrow +\infty$, we can find some $t_0>0$ small and $T>0$ large enough such that $(\|\nabla u\|_2^2+\|\nabla v\|_2^2)t_0^2<\bar{\rho}$ and $J(T\star (u,v))<2m_\nu(a_1,b_1)<0$. Let $\gamma(t):=(t_0+(T-t_0)t)\star (u,v)$, then $\gamma\in \Gamma_{\nu}^{a_1,b_1}$ and thus $\Gamma_{\nu}^{a_1,b_1}\neq \emptyset$.

For any $\gamma\in \Gamma_{\nu}^{a_1,b_1}$, by $J(\gamma(1))<2m_\nu(a_1,b_1)<m_\nu(a_1,b_1)<0$ and \eqref{eq:20231206-xe3}, we have that
\[
\|\nabla \gamma_1(1)\|_2^2+\|\nabla \gamma_2(1)\|_2^2>2\rho_0.
\]
So by $\|\nabla \gamma_1(0)\|_2^2+\|\nabla \gamma_2(0)\|_2^2<\bar{\rho}\leq \rho_0$, we see that $\{\gamma(t):t\in[0,1]\} \cap B_{\rho_0}^{a_1,b_1}\neq \emptyset$ for any $\gamma\in \Gamma_{\nu}^{a_1,b_1}$, which implies that
$$\max_{t\in [0,1]}J(\gamma(t))\geq \inf_{B_{\rho_0}^{a_1,b_1}}J(u,v)\geq k_0>0.$$
Hence, by the arbitrary of $\gamma\in \Gamma_{\nu}^{a_1,b_1}$ and \eqref{eq:20240216-e1}, we conclude that \eqref{eq:20231207-e2} holds.
\ep

\bl\lab{lemma:20240206}
Under the assumptions of Theorem \ref{th:20240216-t1}, {for} any $\nu\in (0,\nu_0)$, there exists a Palais-Smale sequence $\{(u_n,v_n)\}\subset T(a_1,b_1)$ for $J$ restricted to $T_{rad}(a_1,b_1)$ at the level $M_\nu(a_1,b_1)$ which satisfies $u_{n,-}\rightarrow 0, v_{n,-}\rightarrow 0$ in $H^1(\R^N)$ and the property $P(u_n,v_n)\rightarrow 0$.
\el
\bp
~It is standard to prove this kind of results now, see for example \cite{Bartsch2018,Jeanjean1997}.
\ep

\br\lab{remark:20240215-r1}
By Lemma \ref{lemma:20240206}, we may assume the Palais-Smale sequence $\{(u_n,v_n)\}$ {is} nonnegative. If not, we shall put
$$\tilde{u}_n:=\frac{\sqrt{a_1}u_{n,+}}{\|u_{n,+}\|_2}, \tilde{v}_n:=\frac{\sqrt{b_1}v_{n,+}}{\|v_{n,+}\|_2}.$$
Then one can see that $(\tilde{u}_n,\tilde{v}_n)\in T_{rad}(a_1,b_1)$ are nonnegative. In addition, one can show that $(u_n-\tilde{u}_n,v_n-\tilde{v}_n)\rightarrow (0,0)$ in $H^1(\R^N,\R^2)$, and
$$J'\big|_{T_{rad}(a_1,b_1)}(\tilde{u}_n,\tilde{v}_n)-J'\big|_{T_{rad}(a_1,b_1)}(u_n,v_n)\rightarrow 0,$$
$$P(\tilde{u}_n,\tilde{v}_n)-P(u_n,v_n)\rightarrow 0$$
as $n\rightarrow \infty$, which imply that $(\tilde{u}_n,\tilde{v}_n)$ is a nonnegative sequence we are searching for.
\er

\subsection{Preliminaries for the Estimation of the Mountain pass level}

Let $A_N:=[N(N-2)]^{\frac{N-2}{4}}$ and define $U_n(x):=\Theta_n(|x|)\in H_{\mathrm{rad}}^{1}(\R^N)$, where
\[
\Theta_n(r)=A_N\left\{\begin{array}{lcl}
\left(\frac{n}{1+n^{2}r^{2}}\right)^{\frac{N-2}{2}}, &0\leq r<1;\\
\left(\frac{n}{1+n^2}\right)^{\frac{N-2}{2}}(2-r), &1\leq r<2;\\
0, &r\geq 2.
\end{array}
\right.
\]
By a direct computation, the following estimations hold
\beq\label{0427_3}
\begin{aligned}
\|\nabla U_n\|_2^2&={\int_{\R^N}|\nabla U_n|^2\ud x}=S^{\frac{N}{2}}+O\left(\frac{1}{n^{N-2}}\right), ~\hbox{as}~ n\rightarrow \infty.
\end{aligned}
\eeq

\beq\label{0427_4}
\begin{aligned} \|U_n\|^{2^*}_{2^*}=S^{\frac{N}{2}}\textcolor{red}{\pm}O\left(\frac{1}{n^N}\right), ~\hbox{as}~ n\to \infty.
\end{aligned}
\eeq

For any $1\leq \eta< 2^*$,
\beq\lab{eq:20240228-xe1}
\|U_n\|_\eta^\eta=\begin{cases}
O\left(\frac{1}{n^{\min\{\frac{(N-2)\eta}{2}, N-\frac{(N-2)\eta}{2}\}}}\right)~&\hbox{if}~\eta\neq \frac{N}{N-2},\\
O\left(\frac{\ln n}{n^{\frac{N}{2}}}\right)~&\hbox{if}~\eta=\frac{N}{N-2}.
\end{cases}
\eeq
In particular,
\beq\lab{eq:20240209-bhe1}
\begin{aligned}
\|U_n\|_2^2=\begin{cases}
O(\frac{1}{n}),\quad\quad &\hbox{if}~N=3;\\
O\big(\frac{\ln n}{n^2}\big), \quad &\hbox{if}~N=4,\\
O\left(\frac{1}{n^2}\right),\quad\quad &\hbox{if}~N\geq 5.
\end{cases} \quad ~\hbox{as}~ n\to\infty.
\end{aligned}
\eeq

Let $(u_0,v_0)$ the local minimizer given in Theorem \ref{th:20231206-t1}. We {known} that they are positive radial decreasing functions. So, by \eqref{eq:20240228-xe1}, we also have the following estimations for all $N\geq 3$ provided $n$ {is} large (see also \cite{Radulescu2024})
\beq\lab{eq:20240209-be1}
\int_{\R^N}u_0U_n\ud x=O(\|U_n\|_1)=O\left(\frac{1}{n^{\frac{N-2}{2}}}\right),\int_{\R^N}v_0U_n\ud x=O(\|U_n\|_1)=O\left(\frac{1}{n^{\frac{N-2}{2}}}\right),
\eeq
\[
\int_{\R^N}u_0U_n^{2^*-1}\ud x=O(\|U_n\|_{2^*-1}^{2^*-1})=O\left(\frac{1}{n^{\frac{N-2}{2}}}\right), \int_{\R^N}v_0U_n^{2^*-1}\ud x=O(\|U_n\|_{2^*-1}^{2^*-1})=O\left(\frac{1}{n^{\frac{N-2}{2}}}\right).
\]

\br\lab{remark:20240228-xr1}
One can see that $\min\{\frac{(N-2)\eta}{2}, N-\frac{(N-2)\eta}{2}\}=N-\frac{(N-2)\eta}{2}$ provided $\eta\geq \frac{N}{N-2}$. Thus, for $N\geq 6$ with $\eta>2$, we have that
$\min\{\frac{(N-2)\eta}{2}, N-\frac{(N-2)\eta}{2}\}=N-\frac{(N-2)\eta}{2}<2$. Thus,
$$\|U_n\|_2^2=o\left(\|U_n\|_\eta^\eta\right) ~\hbox{and}~O\left(\frac{1}{n^2}\right)=o\left(\frac{1}{n^{N-\frac{(N-2)}{2}\eta}}\right).$$
\er

For $t\geq 0$ and $n\in \N$, we define
\beq\lab{eq:20240224-e1}
\begin{cases}
\Phi_{n,t}:=\frac{\sqrt{{a_1}}}{\|u_0+tU_n\|_2}(u_0+tU_n),\\
\Psi_{n,t}:=\frac{\sqrt{{b_1}}}{\|v_0+\sqrt{\frac{\beta}{\alpha}}tU_n\|_2}(v_0+\sqrt{\frac{\beta}{\alpha}}tU_n).
\end{cases}
\eeq
Then for any $n\in \N$, $\{(\Phi_{n,t}, \Psi_{n,t}): t\geq 0\}$ is a curve in $T_{\mathrm{rad}}(a_1,b_1)$ with $(\Phi_{n,0}, \Psi_{n,0})=(u_0,v_0)$.
So, for any $n\in \N$, we introduce a map $H_n:\R^+\rightarrow \R$ defined by
\beq\lab{eq:20240224-e2}
H_n(t):=J(\Phi_{n,t}, \Psi_{n,t}),\quad t\geq 0.
\eeq

\bl\lab{lemma:20240224-l1}
For any $T>0$, the following results hold uniformly for $t\in [0,T]$:
\begin{enumerate}[label=(\roman*)]
\item \label{20240224-l1-p1}$\|u_0+t U_n\|_2^2=a_1+o_n(1), \|v_0+\sqrt{\frac{\beta}{\alpha}}tU_n\|_2^2=b_1+o_n(1)$;
\item \label{20240224-l1-p2}$\|u_0+t U_n\|_p^p=\|u_0\|_p^p+o_n(1), \|v_0+\sqrt{\frac{\beta}{\alpha}}tU_n\|_q^q=\|v_0\|_q^q+o_n(1)$;
\item \label{20240224-l1-p3}$\int_{\R^N}|u_0+tU_n|^\alpha |v_0+\sqrt{\frac{\beta}{\alpha}}tU_n|^\beta \ud x
    =\int_{\R^N}u_0^\alpha v_0^\beta \ud x+ (\frac{\beta}{\alpha})^{\frac{\beta}{2}} S^{\frac{N}{2}}t^{2^*}+o_n(1)$;
\item \label{20240224-l1-p4}$\|\nabla (u_0+tU_n)\|_2^2=\|\nabla u_0\|_2^2+S^{\frac{N}{2}}t^2+o_n(1), \|\nabla (v_0+\sqrt{\frac{\beta}{\alpha}}tU_n)\|_2^2=\|\nabla v_0\|_2^2+\frac{\beta}{\alpha}S^{\frac{N}{2}}t^2+o_n(1)$.
\item \label{20240224-l1-p5}For any $\eta\in \R$, $(\frac{\sqrt{a_1}}{\|u_0+tU_n\|_2})^\eta=1+o_n(1), \left(\frac{\sqrt{b_1}}{\|v_0+\sqrt{\frac{\beta}{\alpha}}tU_n\|_2}\right)^\eta=1+o_n(1),
\frac{\ud}{\ud t}\left((\frac{\sqrt{a_1}}{\|u_0+tU_n\|_2})^\eta\right)=o_n(1), \frac{\ud}{\ud t}\left(\left(\frac{\sqrt{b_1}}{\|v_0+\sqrt{\frac{\beta}{\alpha}}tU_n\|_2}\right)^\eta\right)=o_n(1)$.
\end{enumerate}
\el
\bp
~Recalling \eqref{0427_3},\eqref{0427_4} and $H_{\mathrm{rad}}^{1}(\R^N)\hookrightarrow\hookrightarrow L^\eta(\R^N)$ is compact provided $\eta\in (2, 2^*), N\geq 2$, we can deduce the results above by a direct computation.
\ep

\bl\lab{lemma:20240224-l2}
There exists some $T>0$ and $n_T\in \N$ such that
\begin{equation*}
H_n(T)<2m_\nu(a_1,b_1)<0, \forall n\geq n_T.
\end{equation*}
\el
\bp
~Let $T>0$ large (to be determined) be fixed, then by Lemma \ref{lemma:20240224-l1}, uniformly for $t\in [0,T]$,
\begin{align*}
H_n(t)=&\frac{1}{2}(1+o_n(1)) (\|\nabla u_0\|_2^2+S^{\frac{N}{2}}t^2+o_n(1))\\
&+\frac{1}{2}(1+o_n(1))(\|\nabla v_0\|_2^2+\frac{\beta}{\alpha}S^{\frac{N}{2}}t^2+o_n(1))\\
&-\frac{\mu_1}{p}(1+o_n(1))\left(\|u_0\|_p^p+o_n(1)\right)\\
&-\frac{\mu_2}{q}(1+o_n(1))\left(\|v_0\|_q^q+o_n(1)\right)\\
&-\nu (1+o_n(1))\left(\int_{\R^N}u_0^\alpha v_0^\beta \ud x+ (\frac{\beta}{\alpha})^{\frac{\beta}{2}} S^{\frac{N}{2}}t^{2^*}+o_n(1)\right)\\
=&m_\nu(a_1,b_1)+S^{\frac{N}{2}}\left[\frac{2^*}{2\alpha}t^2-\nu (\frac{\beta}{\alpha})^{\frac{\beta}{2}}t^{2^*}\right]+o_n(1).
\end{align*}
Then by $2^*>2$, it is easy to find such a $T>0$ and $n_T$ large such that
$$H_n(T)<2m_\nu(a_1,b_1), \forall n\geq n_T.$$
\ep

\bl\label{lemma:20240206-xl1}
Let $H_n(t)$ be given by \eqref{eq:20240224-e2} and $T, n_T$ be given by Lemma \ref{lemma:20240224-l2}, then for any $n\geq n_T$, there exists $0<t_n<T$ such that $H_n(t_n)=\max\limits_{t\in [0,T]}H_n(t)\geq M_\nu(a_1,b_1)$ and $0<\inf\limits_{n\geq n_T}t_n\leq\sup\limits_{n\geq n_T}t_n<+\infty$.
\el
\bp
~{Let $t_1>0$ small enough such that $(\|\nabla u_0\|_2^2+\|\nabla v_0\|_2^2)t_1^2<\bar{\rho}$. For any fixed $n\geq n_T$, define}
\[
\gamma_n(t):=
\begin{cases}
[(2-2t_1)t+t_1]\star (u_0,v_0)\quad &\hbox{if}~t\in [0,\frac{1}{2}],\\
(\Phi_{n,T(2t-1)},\Psi_{n,T(2t-1)})\quad &\hbox{if}~t\in [\frac{1}{2},1].
\end{cases}
\]
Then $\gamma_n\in C([0,1], T_{rad}(a_1,b_1))$. In particular,$\|\nabla \gamma_{n,1}(0)\|_2^2+\|\nabla \gamma_{n,2}(0)\|_2^2=(\|\nabla u_0\|_2^2+\|\nabla v_0\|_2^2)t_1^2<\bar{\rho}$
and $J(\gamma_n(1))=J(\Phi_{n,T}, \Psi_{n,T})=H_n(T)<2m_\nu(a_1,b_1)$ due to Lemma \ref{lemma:20240224-l2}, we see that $\gamma_n\in \Gamma_{\nu}^{a_1,b_1}$ for any $n\geq n_T$.
Thus,
\[
M_\nu(a_1,b_1)\leq \max_{t\in [0,1]}J(\gamma_n(t)), \quad \forall~ n\geq n_T.
\]
Furthermore, by Theorem \ref{th:20231206-t1}, {Lemmas \ref{lemma:20231207-zl1} and \ref{lemma:20231207-l1}, we see that $J(t \star (u_0, v_0))$ is decreasing for $t \in (0,1]$. In particular, $J(t_1 \star (u_0, v_0)) < 0$ because $t_1$ can be chosen sufficiently small, as can be shown by an argument analogous to that in the proof of Lemma \ref{lemma:20231206-l1}.}
Thus
\begin{equation*}
\begin{aligned}
\max_{t\in [0,1]}J(\gamma_n(t))=&\max\left\{\max_{t\in [0,\frac{1}{2}]}J(\gamma_n(t)),\max_{t\in [\frac{1}{2},1]}J(\gamma_n(t))\right\}\\
=&\max\left\{J(t_1\star(u_0,v_0)), \max_{t\in [\frac{1}{2},1]}J(\gamma_n(t))\right\}\\
=&\max_{t\in [\frac{1}{2},1]}J(\gamma_n(t))\\
=&\max_{t\in [0,T]}H_n(t).
\end{aligned}
\end{equation*}
Combining with Lemma \ref{lemma:20231206-l1}, there exists $0<t_n<T$ such that
\[
H_n(t_n)=\max_{t\in [0,T]}H_n(t)=\max_{t\in [0,1]}J(\gamma_n(t))\geq M_\nu(a_1,b_1)\geq k_0>0.
\]

Hence,
\[
\sup_{n\in \N}t_n\leq T<\infty.
\]

On the other hand, we {\bf claim} that $\displaystyle\inf_{n\geq n_T}t_n>0$. If not, by $t_n>0$ for any fixed $n$, we may assume that $t_n\rightarrow 0$ up to a subsequence.  So, it follows from  \eqref{0427_3},\eqref{eq:20240209-bhe1},\eqref{eq:20240224-e1} and Lemma \ref{lemma:20240224-l1}-\ref{20240224-l1-p5} that
\[
(\Phi_{n,t_n},\Psi_{n,t_n})\rightarrow (u_0,v_0) ~\hbox{in}~H^1(\R^N,\R^2).
\]
Thus, $H_n(t_n)=J(\Phi_{n,t_n},\Psi_{n,t_n})\rightarrow J(u_0,v_0)=m_\nu(a_1,b_1)<0$, which is a contradiction to the fact $H_n(t_n)\geq M_\nu(a_1,b_1)\geq k_0>0$ for all $n\geq n_T$.
\ep

\vskip 0.2in
For $n\geq n_T$, let $t_n$ be given by Lemma \ref{lemma:20240206-xl1} above, up to a subsequence, we may assume that $t_n\rightarrow t^*\in (0,T)$.
We have the following result.
\bl\lab{lemma:20240225-zl1}
\begin{enumerate}[label=(\roman*)]
\item \label{lemma:20240225-zl1-p1} It holds that
$t^*=\nu^{-\frac{N-2}{4}}\alpha^{\frac{4-(N-2)\alpha}{8}} \beta^{-\frac{(N-2)\beta}{8}}$, which is the unique root of
\[
\frac{1}{\alpha}t -\nu (\frac{\beta}{\alpha})^{\frac{\beta}{2}} t^{2^*-1}=0.
\]
\item \label{lemma:20240225-zl1-p2}
Let $A>0,B>0$ and $2^*=\frac{2N}{N-2}$ with $N\geq 3$, then
\[
\max_{t>0}(At^2-Bt^{2^*})=\frac{2}{N}(\frac{N-2}{N})^{\frac{N-2}{2}} A^{\frac{N}{2}} B^{-\frac{N-2}{2}}
\]
and it is attained only by $t=\left(\frac{(N-2)A}{NB}\right)^{\frac{N-2}{4}}$.
In particular, denote by $f(t):=\frac{2^*}{2\alpha}t^2-\nu (\frac{\beta}{\alpha})^{\frac{\beta}{2}} t^{2^*}, t\geq 0$, then $t^*$ is the unique maximum of $f$ with
\[
f(t^*)=\max_{t>0}f(t)=\frac{2}{N-2} \nu^{-\frac{N-2}{2}} \alpha^{-\frac{(N-2)\alpha}{4}} \beta^{-\frac{(N-2)\beta}{4}}.
\]
\end{enumerate}
\el
\bp
~Recalling Lemma \ref{lemma:20240224-l1}-\ref{20240224-l1-p5} , for $t\in (0,T)$, we have that
\begin{align*}
H'_n(t)=&\frac{1}{2} (1+o_n(1))[2\|\nabla U_n\|_2^2 t]+\frac{1}{2}(1+o_n(1)) [2\|\nabla U_n\|_2^2\frac{\beta}{\alpha}t]\\
&-2^*\nu  (\frac{\beta}{\alpha})^{\frac{\beta}{2}}\|U_n\|_{2^*}^{2^*}t^{2^*-1}+o_n(1).
\end{align*}
So by $H'_n(t_n)\equiv 0$, let $n\rightarrow \infty$, we obtain that
$$\frac{1}{\alpha}t^* -\nu (\frac{\beta}{\alpha})^{\frac{\beta}{2}} (t^*)^{2^*-1}=0$$
and thus $t^*=\nu^{-\frac{N-2}{4}}\alpha^{\frac{4-(N-2)\alpha}{8}} \beta^{-\frac{(N-2)\beta}{8}}$.  We finish the proof of \ref{lemma:20240225-zl1-p1}.

And the the results of \ref{lemma:20240225-zl1-p2} can also be proved by a direct computation.
\ep

Recalling \eqref{eq:20240209-bhe1} and \eqref{eq:20240209-be1}, we denote by
\beq\lab{eq:20240224-e3}
\frac{\sqrt{a_1}}{\|u_0+t_nU_n\|_2}=\begin{cases}
1-\ell_1\frac{1}{n^{\frac{N-2}{2}}}+o\left(\frac{1}{n^{\frac{N-2}{2}}}\right),\quad&\hbox{if}~3\leq N\leq 6,\\
1-\ell_1 \frac{1}{n^2}+o\left(\frac{1}{n^2}\right),&\hbox{if}~N\geq 7,
\end{cases}
\eeq
and
\beq\lab{eq:20240224-e4}
\frac{\sqrt{b_1}}{\|v_0+\sqrt{\frac{\beta}{\alpha}}t_nU_n\|_2}=\begin{cases}
1-\ell_2\frac{1}{n^{\frac{N-2}{2}}}+o\left(\frac{1}{n^{\frac{N-2}{2}}}\right),\quad&\hbox{if}~3\leq N\leq 6,\\
1-\ell_2 \frac{1}{n^2}+o\left(\frac{1}{n^2}\right),&\hbox{if}~N\geq 7.
\end{cases}
\eeq
Then
\beq\lab{eq:20240224-e5}
\left(\frac{\sqrt{a_1}}{\|u_0+t_nU_n\|_2}\right)^p=\begin{cases}
1-p\ell_1\frac{1}{n^{\frac{N-2}{2}}}+o\left(\frac{1}{n^{\frac{N-2}{2}}}\right),\quad&\hbox{if}~3\leq N\leq 6,\\
1-p\ell_1 \frac{1}{n^2}+o\left(\frac{1}{n^2}\right),&\hbox{if}~N\geq 7,
\end{cases}
\eeq
and
\beq\lab{eq:20240224-e6}
\left(\frac{\sqrt{b_1}}{\|v_0+\sqrt{\frac{\beta}{\alpha}}t_nU_n\|_2}\right)^q=\begin{cases}
1-q\ell_2\frac{1}{n^{\frac{N-2}{2}}}+o\left(\frac{1}{n^{\frac{N-2}{2}}}\right),\quad&\hbox{if}~3\leq N\leq 6,\\
1-q\ell_2 \frac{1}{n^2}+o\left(\frac{1}{n^2}\right),&\hbox{if}~N\geq 7.
\end{cases}
\eeq

\bl\lab{lemma:20240224-l3}
Let $\ell_1,\ell_2$ be the numbers given by \eqref{eq:20240224-e3} and \eqref{eq:20240224-e4} respectively, then
\[
\begin{cases}
\int_{\R^N}u_0U_n \ud x t_n=a_1\ell_1\frac{1}{n^{\frac{N-2}{2}}}+o\left(\frac{1}{n^{\frac{N-2}{2}}}\right),\quad &\hbox{if}~3\leq N\leq 5,\\
2\int_{\R^N}u_0U_n \ud x t_n +\|U_n\|_2^2 t_n^2=2a_1\ell_1 \frac{1}{n^2}+o(\frac{1}{n^2}),\quad &\hbox{if}~N=6,\\
\|U_n\|_2^2 t_n^2=2a_1\ell_1\frac{1}{n^2}+o(\frac{1}{n^2}), \int_{\R^N}u_0U_n \ud x t_n=o(\frac{1}{n^2}),~&\hbox{if}~N\geq 7
\end{cases}
\]
and
\[
\begin{cases}
\int_{\R^N}v_0 U_n \ud x \sqrt{\frac{\beta}{\alpha}}t_n
=b_1\ell_2\frac{1}{n^{\frac{N-2}{2}}}+o\left(\frac{1}{n^{\frac{N-2}{2}}}\right),\quad &\hbox{if}~3\leq N\leq 5,\\
2\int_{\R^N}v_0U_n \ud x \sqrt{\frac{\beta}{\alpha}}t^* +\|U_n\|_2^2 \frac{\beta}{\alpha}(t^*)^2=2b_1\ell_2 \frac{1}{n^2}+o(\frac{1}{n^2}),\quad &\hbox{if}~N=6,\\
\|U_n\|_2^2 \frac{\beta}{\alpha}t_n^2=2b_1\ell_2\frac{1}{n^2}+o(\frac{1}{n^2}), \int_{\R^N}v_0U_n \ud x\sqrt{\frac{\beta}{\alpha}} t_n=o(\frac{1}{n^2}),~&\hbox{if}~N\geq 7.
\end{cases}
\]
In particular, for $N\geq 7$, we have $\frac{\ell_2}{\ell_1}=\frac{a_1\beta}{b_1\alpha}$.
\el
\bp
~By \eqref{eq:20240224-e3}, we have that
\begin{align*}
&\|u_0\|_2^2+2\int_{\R^N}u_0U_n \ud x t_n +\|U_n\|_2^2 t_n^2=\|u_0+t_nU_n\|_2^2\\
=&\begin{cases}
a_1+2a_1\ell_1\frac{1}{n^{\frac{N-2}{2}}}+o\left(\frac{1}{n^{\frac{N-2}{2}}}\right),\quad&\hbox{if}~3\leq N\leq 6,\\
a_1+2a_1\ell_1 \frac{1}{n^2}+o\left(\frac{1}{n^2}\right)&\hbox{if}~N\geq 7.
\end{cases}
\end{align*}
So, we have that
\beq\lab{eq:20240224-e9}
\int_{\R^N}u_0U_n \ud x t_n=a_1\ell_1\frac{1}{n^{\frac{N-2}{2}}}+o\left(\frac{1}{n^{\frac{N-2}{2}}}\right), 3\leq N\leq 5.
\eeq
Similarly, we can prove that
\beq\lab{eq:20240224-e10}
\int_{\R^N}v_0 U_n \ud x \sqrt{\frac{\beta}{\alpha}}t_n
=b_1\ell_2\frac{1}{n^{\frac{N-2}{2}}}+o\left(\frac{1}{n^{\frac{N-2}{2}}}\right), 3\leq N \leq 5.
\eeq
If $N=6$, then
\[
2\int_{\R^N}u_0U_n \ud x t_n +\|U_n\|_2^2 t_n^2=2a_1\ell_1 \frac{1}{n^2}+o(\frac{1}{n^2})
\]
and
\[
2\int_{\R^N}v_0U_n \ud x \sqrt{\frac{\beta}{\alpha}}t_n +\|U_n\|_2^2 \frac{\beta}{\alpha}t_n^2=2b_1\ell_2 \frac{1}{n^2}+o(\frac{1}{n^2}).
\]
If $N\geq 7$, then we have that
\[
\|U_n\|_2^2 t_n^2=2a_1\ell_1\frac{1}{n^2}+o(\frac{1}{n^2}), \|U_n\|_2^2 \frac{\beta}{\alpha}t_n^2=2b_1\ell_2\frac{1}{n^2}+o(\frac{1}{n^2})
\]
and thus $\frac{\ell_2}{\ell_1}=\frac{a_1\beta}{b_1\alpha}$.

\ep


\bl\lab{lemma:20240228-l1}
Let $\alpha>1,\beta>1,\alpha+\beta>3$ and $0<L_1\leq L_2<\infty$, then there \textcolor{red}{exist} some $A_1>0$ small and $A_2>0$ large such that
\beq\lab{eq:20240228-e1}
\begin{aligned}
&(t_1+s)^\alpha (t_2+s)^\beta -t_1^\alpha t_2^\beta-s^{\alpha+\beta}-\alpha t_{1}^{\alpha-1}t_{2}^{\beta}s-\beta t_{2}^{\beta-1}t_{1}^{\alpha}s\\
\geq& A_1 s^{\alpha+\beta-1}-A_2 s^2, ~~\forall (t_1,t_2,s)\in [L_1,L_2]^2\times \R^+.
\end{aligned}
\eeq
\el
\bp
~Put $f(t_1,t_2,s):=(t_1+s)^\alpha (t_2+s)^\beta -t_1^\alpha t_2^\beta-s^{\alpha+\beta}-\alpha t_{1}^{\alpha-1}t_{2}^{\beta}s-\beta t_{2}^{\beta-1}t_{1}^{\alpha}s$, then it is trivial that for $s\rightarrow +\infty$,
\begin{align*}
\frac{f(t_1,t_2, s)}{s^{\alpha+\beta-1}}=&\frac{(t_1+s)^\alpha (t_2+s)^\beta -s^{\alpha+\beta}}{s^{\alpha+\beta-1}}+o(1)\\
=&s[(1+\frac{t_1}{s})^\alpha (1+\frac{t_2}{s})^\beta -1]+o(1)\\
=&(\alpha t_1 +\beta t_2)+o(1).
\end{align*}
Since $t_1,t_2\geq L_1>0$, then we can find some $A_1\in (0, (\alpha+\beta)L_1)$  and  some $L_3>0$ large enough such that
\beq\lab{eq:20240228-e2}
\begin{aligned}
f(t_1,t_2,s)=&(t_1+s)^\alpha (t_2+s)^\beta -t_1^\alpha t_2^\beta-s^{\alpha+\beta}-\alpha t_{1}^{\alpha-1}t_{2}^{\beta}s-\beta t_{2}^{\beta-1}t_{1}^{\alpha}s\\
\geq& A_1 s^{\alpha+\beta-1},
\quad \forall (t_1,t_2,s)\in [L_1,L_2]^2\times [L_3,+\infty).
\end{aligned}
\eeq
Define $g(t_1,t_2,s):=f(t_1,t_2,s)-A_1 s^{\alpha+\beta-1}$, for any fixed $(t_1,t_2)\in [L_1,L_2]^2$, by Bernoulli's inequality, one can see that
\begin{align*}
\frac{g(t_1,t_2,s)}{s^2}\geq&\frac{t_1^\alpha t_2^\beta (1+\frac{\alpha s}{t_1})(1+\frac{\beta s}{t_2})
-t_1^\alpha t_2^\beta-s^{\alpha+\beta}-\alpha t_{1}^{\alpha-1}t_{2}^{\beta}s-\beta t_{2}^{\beta-1}t_{1}^{\alpha}s}{s^2}-A_1 s^{\alpha+\beta-3}\\
=&(\alpha t_{1}^{\alpha-1}) (\beta t_{2}^{\beta-1}) -s^{\alpha+\beta-2}-A_1 s^{\alpha+\beta-3}\\
\geq&\alpha \beta L_{1}^{\alpha+\beta-2}-s^{\alpha+\beta-2}-A_1 s^{\alpha+\beta-3}.
\end{align*}
So there exists some $s^*>0$ small enough such that
\beq\lab{eq:20240307-be1}
\frac{g(t_1,t_2,s)}{s^2}\geq \frac{1}{2}\alpha \beta L_{1}^{\alpha+\beta-2}, \forall (t_1,t_2,s)\in [L_1,L_2]^2\times (0,s^*].
\eeq
On the other hand, {since} $[L_1,L_2]^2\times [\frac{s^*}{2},L_3]$ is compact, one can see that
$\inf_{(t_1,t_2,s)\in [L_1,L_2]^2\times [\frac{s^*}{2},L_3]}\frac{g(t_1,t_2,s)}{s^2}$ is attained.
Denote by
\beq\lab{eq:20240307-be2}
-A_2:=\min\left\{0, \inf_{(t_1,t_2,s)\in [L_1,L_2]^2\times [\frac{s^*}{2},L_3]}\frac{g(t_1,t_2,s)}{s^2}\right\}.
\eeq
Then by \eqref{eq:20240307-be1} and \eqref{eq:20240307-be2},
\beq\lab{eq:20240228-e3}
\begin{aligned}
f(t_1,t_2,s)=
&(t_1+s)^\alpha (t_2+s)^\beta -t_1^\alpha t_2^\beta-s^{\alpha+\beta}-\alpha t_{1}^{\alpha-1}t_{2}^{\beta}s-\beta t_{2}^{\beta-1}t_{1}^{\alpha}s\\
\geq& A_1 s^{\alpha+\beta-1}-A_2 s^2,
\quad \forall (t_1,t_2,s)\in [L_1,L_2]^2\times [0,L_3].
\end{aligned}
\eeq
Hence, by \eqref{eq:20240228-e2} and \eqref{eq:20240228-e3}, we {obtain} \eqref{eq:20240228-e1}.
\ep

Then we have the following useful result which {helps} us when estimating the mountain pass level for $3\leq N\leq 5$.
\bc\lab{cro:20240228-c1}
Let $3\leq N\leq 5, \alpha>1,\beta>1, \alpha+\beta=2^*$ and $U_n$ be given as above. Then
$$\begin{aligned}
&(u_0+tU_n)^\alpha (v_0+\sqrt{\frac{\beta}{\alpha}}tU_n)^\beta -u_0^\alpha v_0^\beta -(\sqrt{\frac{\beta}{\alpha}})^{\beta}t^{2^*} U_{n}^{2^*}\\
&-\alpha u_{0}^{\alpha-1}v_0^\beta t U_n -\beta v_{0}^{\beta-1}  u_0^\alpha \sqrt{\frac{\beta}{\alpha}} tU_n\\
\geq &A_1 (\sqrt{\frac{\beta}{\alpha}})^{\beta}t^{2^*-1} U_{n}^{2^*-1}-A_2 (\sqrt{\frac{\beta}{\alpha}})^{\beta}t^2 U_n^2.
\end{aligned}$$
for all $t\geq 0$ and $x\in \R^N$, {where $A_1$ and $A_2$ are given in Lemma \ref{lemma:20240228-l1}.}
\ec
\bp
~Let $t_1=u_0(x), t_2=\sqrt{\frac{\alpha}{\beta}}v_0(x), |x|\leq 2$, then by $u_0,v_0$ are positive radial decreasing functions, we can find some $0<L_1<L_2<+\infty$ such that
$$L_1\leq u_0(x)\leq L_2, L_1\leq \sqrt{\frac{\alpha}{\beta}}v_0(x)\leq L_2, ~~\forall |x|\leq 2.$$
Set $s=tU_n(x)$, then by Lemma \ref{lemma:20240228-l1}, for $|x|\leq 2$ and $t\geq 0$, we have that
\begin{align*}
&(u_0+tU_n)^\alpha (\sqrt{\frac{\alpha}{\beta}}v_0+tU_n)^\beta -u_0^\alpha (\sqrt{\frac{\alpha}{\beta}}v_0)^\beta -t_{n}^{2^*} U_{n}^{2^*}\\
&-\alpha u_{0}^{\alpha-1}(\sqrt{\frac{\alpha}{\beta}}v_0)^{\beta} t U_n -\beta (\sqrt{\frac{\alpha}{\beta}}v_0)^{\beta-1}u_0^\alpha tU_n\\
\geq&A_1 t^{2^*-1} U_{n}^{2^*-1} -A_2 t^2 U_n^2.
\end{align*}
So
\[
\begin{aligned}
&(u_0+tU_n)^\alpha (v_0+\sqrt{\frac{\beta}{\alpha}}tU_n)^\beta -u_0^\alpha v_0^\beta -(\sqrt{\frac{\beta}{\alpha}})^{\beta}t^{2^*} U_{n}^{2^*}\\
&-\alpha u_{0}^{\alpha-1}v_0^\beta t U_n -\beta v_{0}^{\beta-1}  u_0^\alpha \sqrt{\frac{\beta}{\alpha}} tU_n\\
\geq &A_1 (\sqrt{\frac{\beta}{\alpha}})^{\beta}t^{2^*-1} U_{n}^{2^*-1}-A_2 (\sqrt{\frac{\beta}{\alpha}})^{\beta}t^2 U_n^2.
\end{aligned}
\]
For $|x|\geq 2$, we have that $s=tU_n(x)=0$, the conclusion is trivial. Hence, we finish the proof.
\ep

The following results will be useful for us when estimating the mountain pass level for $N\geq 6$.
\bl\lab{lemma:20240220-xl1}
Let $\alpha>1,\beta>1$, then for any $t_1,t_2,s_1,s_2\in \R^+$, it holds that
\beq\lab{eq:20240220-xe1}
(t_1+t_2)^\alpha (s_1+s_2)^\beta -t_1^\alpha s_1^\beta -\alpha t_{1}^{\alpha-1}s_1^\beta t_2 -\beta t_1^\alpha s_{1}^{\beta-1}s_2-t_2^\alpha s_2^\beta\geq 0.
\eeq
\el
\bp
~By the continuity and density, we only need to prove that \eqref{eq:20240220-xe1} is true for $t_1>0,t_2>0,s_1>0,s_2>0$.
By a transformation $t:=\frac{t_1}{t_1+t_2}, s:=\frac{s_1}{s_1+s_2}$, then \eqref{eq:20240220-xe1} is equivalent to that
\beq\lab{eq:20240220-xe2}
1-t^\alpha s^\beta -\alpha t^{\alpha-1}(1-t)s^\beta -\beta t^\alpha s^{\beta-1}(1-s)-(1-t)^\alpha (1-s)^\beta\geq 0, 0< t\leq 1, 0< s\leq 1.
\eeq
Put $$h(t,s):=t^\alpha s^\beta +\alpha t^{\alpha-1}(1-t)s^\beta +\beta t^\alpha s^{\beta-1}(1-s)+(1-t)^\alpha (1-s)^\beta, 0\leq t\leq 1, 0\leq s\leq 1.$$
Then
\[
h(0,s)=(1-s)^\beta\leq h(0,0)=1,
\]
 \[
 h(1,s)=s^\beta -\beta s^{\beta-1}(1-s)=(1+\beta)s^\beta -\beta s^{\beta-1}\leq h(1,1)=1,
 \]
 \[
 h(t,0)=(1-t)^\alpha\leq h(0,0)=1,
 \]
 \[
 h(t,1)=t^\alpha-\alpha t^{\alpha-1}(1-t)=(1+\alpha)t^\alpha -\alpha t^{\alpha-1}\leq h(1,1)=1.
 \]

If \eqref{eq:20240220-xe2} is false, then there exists some $(t_0,s_0)\in (0,1)\times (0,1)$ such that
\beq\lab{eq:20240220-xbe11}
h(t_0,s_0)=\max_{(t,s)\in [0,1]^2}h(t,s)>1.
\eeq
By $\frac{\partial}{\partial t}h(t,s)\Big|_{(t,s)=(t_0,s_0)}=0$, a direct computation leads to that
\beq\lab{eq:20240220-xbe12}
(1-t_0)^{\alpha-1}(1-s_0)^\beta= (\alpha-1)t_{0}^{\alpha-2}(1-t_0)s_0^\beta +\beta t_{0}^{\alpha-1}s_{0}^{\beta-1}(1-s_0).
\eeq
 And $\frac{\partial}{\partial s}h(t,s)\Big|_{(t,s)=(t_0,s_0)}=0$ leads to that
 \beq\lab{eq:20240220-xbe13}
(1-t_0)^{\alpha}(1-s_0)^{\beta-1}= (\beta-1)t_0^\alpha s_{0}^{\beta-2}(1-s_0) +\alpha t_{0}^{\alpha-1}(1-t_0)s_{0}^{\beta-1}.
\eeq
By \eqref{eq:20240220-xbe12} and \eqref{eq:20240220-xbe13}, we have that
\beq\lab{eq:20240220-xbe14}
\begin{aligned}
(1-t_0)^\alpha (1-s_0)^\beta=&\left[(\alpha-1)t_{0}^{\alpha-2}(1-t_0)s_0^\beta +\beta t_{0}^{\alpha-1}s_{0}^{\beta-1}(1-s_0)\right](1-t_0)\\
=&\left[(\beta-1)t_0^\alpha s_{0}^{\beta-2}(1-s_0) +\alpha t_{0}^{\alpha-1}(1-t_0)s_{0}^{\beta-1}\right](1-s_0),
\end{aligned}
\eeq
which implies that
\[
(\alpha+\beta-2)t_0^2s_0+(\alpha-1)s_0^2+\beta t_0s_0=(\alpha+\beta-2)t_0s_0^2+(\beta-1)t_0^2+\alpha t_0s_0.
\]
So,
\beq\lab{eq:20240220-xbe16}
\left[(\alpha+\beta-2)t_0s_0-(\alpha-1)s_0-(\beta-1)t_0\right](s_0-t_0)=0.
\eeq
If $(\alpha+\beta-2)t_0s_0-(\alpha-1)s_0-(\beta-1)t_0=0$, then
$$[(\alpha+\beta-2)t_0-(\alpha-1)]s_0=(\beta-1)t_0,$$
which implies that $(\beta-1)t_0-(\alpha-1)(1-t_0)=(\alpha+\beta-2)t_0-(\alpha-1)>0$ and
$$s_0=\frac{(\beta-1)t_0}{(\beta-1)t_0-(\alpha-1)(1-t_0)}>1.$$
This is a contradiction to $s_0\in (0,1)$. Hence, we conclude that $(\alpha+\beta-2)t_0s_0-(\alpha-1)s_0-(\beta-1)t_0\neq 0$ and it follows from \eqref{eq:20240220-xbe16} that $t_0=s_0$.
Substituting $t_0=s_0$ and \eqref{eq:20240220-xbe14} into $h(t_0,s_0)$, we have that
\[
h(t_0,s_0)=h(t_0,t_0)=(\alpha+\beta-1)t_{0}^{\alpha+\beta-2}-(\alpha+\beta-2)t_{0}^{\alpha+\beta-1}.
\]
Define
$$\varphi(t):=(\alpha+\beta-1)t^{\alpha+\beta-2}-(\alpha+\beta-2)t^{\alpha+\beta-1},$$
a direct computation leads to that
$$\varphi'(t)=(\alpha+\beta-2)(\alpha+\beta-1) t^{\alpha+\beta-3}(1-t),$$
which implies that $\varphi(t)$ increases strictly in $(0,1]$.
So by $0<t_0<1$, we have that
$$h(t_0,s_0)<\varphi(1)=1,$$
which is a contradiction to \eqref{eq:20240220-xbe11}.
\ep

\bl\lab{lemma:20240228-xbl1}
Let $0<L_1\leq L_2<+\infty$ and $\eta>2$, then there exists some $A>0$ such that
\[
(t+s)^\eta -t^\eta -\eta t^{\eta-1}s\geq A s^{\eta}, \forall (t,s)\in [L_1,L_2]\times [0,+\infty).
\]
\el
\bp
~By a direct computation, one can see that
$$\lim_{s\rightarrow 0^+} \frac{(t+s)^\eta -t^\eta -\eta t^{\eta-1}s}{s^{\eta}}=\lim_{s\rightarrow 0^+} \frac{(1+\frac{s}{t})^\eta -1 -\eta \frac{s}{t}}{(\frac{s}{t})^{\eta}}=+\infty$$
and
$$\lim_{s\rightarrow +\infty} \frac{(t+s)^\eta -t^\eta -\eta t^{\eta-1}s}{s^{\eta}}=\lim_{s\rightarrow +\infty} \frac{(1+\frac{s}{t})^\eta -1 -\eta \frac{s}{t}}{(\frac{s}{t})^{\eta}}=1$$
uniformly for $t\in [L_1,L_2]$.
In particular, by $(t+s)^\eta -t^\eta -\eta t^{\eta-1}s\geq \frac{\eta(\eta-1)}{2}t^{\eta-2}s^2$, we see that $(t+s)^\eta -t^\eta -\eta t^{\eta-1}s>0$ for all $s>0$. Then similar to the proof of Lemma \ref{lemma:20240228-l1}, one can prove that
$$A:=\inf_{(t,s)\in [L_1,L_2]\times [0,+\infty)} \frac{(t+s)^\eta -t^\eta -\eta t^{\eta-1}s}{s^{\eta}}>0.$$
\ep

Consequently, similar to Corollary \ref{cro:20240228-c1}, we have the following result for $N\geq 6$.
\bc\lab{cro:20240228-hhbc1}
Assume that $N\geq 6, p,q>2$, then
\[
(u_0+tU_n)^p -u_0^p -p u_{0}^{p-1}tU_n\geq A t^p U_n^p
\]
and
\[
\left(v_0+\sqrt{\frac{\beta}{\alpha}}tU_n\right)^q -v_0^q -q v_{0}^{q-1}\sqrt{\frac{\beta}{\alpha}}tU_n\geq A \left(\frac{\beta}{\alpha}\right)^{\frac{q}{2}}{t^q} U_n^q,
\]
{for all $t\geq 0$ and $x\in \R^N$, where $A$ is defined in Lemma \ref{lemma:20240228-xbl1}.}
\ec
\bp
This is just a direct conclusion from Lemma \ref{lemma:20240228-xbl1}.
\ep

\subsection{Estimation for the Mountain pass level}
Next, our aim is to establish the following result.
\bo\lab{prop:20240228-p1}
Under the assumptions of Theorem \ref{th:20240216-t1}, let $H_n(t)$ be defined by \eqref{eq:20240224-e2}  and $t_n$ be given in Lemma \ref{lemma:20240206-xl1} , then for $n$ large enough, we have that
\[
H_n(t_n)<m_\nu(a_1,b_1)+\frac{2}{N-2} \nu^{-\frac{N-2}{2}} \alpha^{-\frac{(N-2)\alpha}{4}} \beta^{-\frac{(N-2)\beta}{4}} S^{\frac{N}{2}}.
\]
\eo

\subsubsection{\bf Proof of Proposition \ref{prop:20240228-p1} for the case of $3\leq N\leq 5$}
In such a case, we shall see that the leading term of error is $O\left(\frac{1}{n^{\frac{N-2}{2}}}\right)$.
 {Recalling \eqref{eq:20240228-xe1}, \eqref{eq:20240224-e5} and \eqref{eq:20240224-e6} and Lemma \ref{lemma:20240224-l1} \ref{20240224-l1-p2}-\ref{20240224-l1-p4},} we have that
\begin{align*}
&H_n(t_n)\\
=&\frac{1}{2}\left[1-2\ell_1\frac{1}{n^{\frac{N-2}{2}}}+o\left(\frac{1}{n^{\frac{N-2}{2}}}\right)\right]\left[\|\nabla u_0\|_2^2+\|\nabla U_n\|_2^2t_n^2+2\int_{\R^N}\nabla u_0\nabla U_n \ud x t_n\right]\\
&+\frac{1}{2}\left[1-2\ell_2\frac{1}{n^{\frac{N-2}{2}}}+o\left(\frac{1}{n^{\frac{N-2}{2}}}\right)\right]\left[\|\nabla v_0\|_2^2+\|\nabla U_n\|_2^2\frac{\beta}{\alpha}t_n^2+2\int_{\R^N}\nabla v_0\nabla U_n \ud x \sqrt{\frac{\beta}{\alpha}}t_n\right]\\
&-\frac{\mu_1}{p}\left[1-p\ell_1\frac{1}{n^{\frac{N-2}{2}}}+o\left(\frac{1}{n^{\frac{N-2}{2}}}\right)\right]\|u_0+t_nU_n\|_p^p\\
&-\frac{\mu_2}{q}\left[1-q\ell_2\frac{1}{n^{\frac{N-2}{2}}}+o\left(\frac{1}{n^{\frac{N-2}{2}}}\right)\right]\|v_0+\sqrt{\frac{\beta}{\alpha}}t_nU_n\|_q^q\\
&-\nu\left[1-\alpha\ell_1\frac{1}{n^{\frac{N-2}{2}}}+o\left(\frac{1}{n^{\frac{N-2}{2}}}\right)\right] \left[1-\beta\ell_2\frac{1}{n^{\frac{N-2}{2}}}+o\left(\frac{1}{n^{\frac{N-2}{2}}}\right)\right]\\
&~\cdot \int_{\R^N}|u_0+t_nU_n|^\alpha |v_0+\sqrt{\frac{\beta}{\alpha}}t_nU_n|^\beta \ud x\\
=&\frac{1}{2}\left[\|\nabla u_0\|_2^2+\|\nabla U_n\|_2^2t_n^2+2\int_{\R^N}\nabla u_0\nabla U_n \ud x t_n\right]\\
&+\frac{1}{2}\left[\|\nabla v_0\|_2^2+\|\nabla U_n\|_2^2\frac{\beta}{\alpha}t_n^2+2\int_{\R^N}\nabla v_0\nabla U_n \ud x \sqrt{\frac{\beta}{\alpha}}t_n\right]\\
&-\frac{\mu_1}{p}\|u_0+t_nU_n\|_p^p-\frac{\mu_2}{q}\|v_0+\sqrt{\frac{\beta}{\alpha}}t_nU_n\|_q^q-\nu\int_{\R^N}|u_0+t_nU_n|^\alpha |v_0+\sqrt{\frac{\beta}{\alpha}}t_nU_n|^\beta \ud x\\
&-\ell_1\frac{1}{n^{\frac{N-2}{2}}}\left[\|\nabla u_0\|_2^2+S^{\frac{N}{2}}(t^*)^2\right]-\ell_2\frac{1}{n^{\frac{N-2}{2}}}\left[\|\nabla v_0\|_2^2+S^{\frac{N}{2}}\frac{\beta}{\alpha}(t^*)^2\right]\\
&+\mu_1\ell_1\frac{1}{n^{\frac{N-2}{2}}}\|u_0\|_p^p+\mu_2\ell_2\frac{1}{n^{\frac{N-2}{2}}}\|v_0\|_q^q\\
&+\nu[\alpha\ell_1+\beta\ell_2]\left[{\frac{1}{n^{\frac{N-2}{2}}}}\int_{\R^N}u_0^\alpha v_0^\beta \ud x +\frac{1}{n^{\frac{N-2}{2}}} S^{\frac{N}{2}}(\frac{\beta}{\alpha})^{\frac{\beta}{2}} (t^*)^{2^*}\right]+o\left(\frac{1}{n^{\frac{N-2}{2}}}\right).
\end{align*}
Since $(u_0,v_0)$ is a ground state solution to \eqref{eq:20240217-xe1}, we have that $J(u_0,v_0)=m_\nu(a_1,b_1)$. In particular,
\beq\lab{eq:20240220-xe4}
\int_{\R^N}\nabla u_0\nabla U_n \ud x ~t_n =-\lambda_1 \int_{\R^N}u_0U_n \ud x ~t_n+\mu_1\int_{\R^N}u_{0}^{p-1}U_n\ud x~ t_n +\nu\alpha \int_{\R^N}u_{0}^{\alpha-1}v_0^\beta U_n\ud x ~t_n
\eeq
and
\beq\lab{eq:20240220-xe5}
\begin{aligned}
&\int_{\R^N}\nabla v_0\nabla U_n \ud x ~ \sqrt{\frac{\beta}{\alpha}}t_n \\
=&-\lambda_2 \int_{\R^N}v_0U_n \ud x~ \sqrt{\frac{\beta}{\alpha}}t_n+\mu_2\int_{\R^N}v_{0}^{q-1}U_n \ud x ~ \sqrt{\frac{\beta}{\alpha}}t_n+\nu\beta \int_{\R^N}u_{0}^{\alpha}v_{0}^{\beta-1} U_n\ud x ~ \sqrt{\frac{\beta}{\alpha}}t_n.
\end{aligned}
\eeq
Substituting \eqref{eq:20240220-xe4} and \eqref{eq:20240220-xe5} into the above equality, we arrive at
\begin{align*}
&H_n(t_n)\\
=&\left[\frac{1}{2}\|\nabla u_0\|_2^2+\frac{1}{2}{\|\nabla v_0\|_2^2}-\frac{\mu_1}{p}\|u_0\|_p^p-\frac{\mu_2}{q}\|v_0\|_q^q-\nu \int_{\R^N}u_0^\alpha v_0^\beta \ud x\right]\\
&+\frac{1}{2}\|\nabla U_n\|_2^2t_n^2+\frac{1}{2}\|\nabla U_n\|_2^2\frac{\beta}{\alpha}t_n^2\\
&+\left[-\lambda_1 \int_{\R^N}u_0U_n \ud x ~t_n+\mu_1\int_{\R^N}u_{0}^{p-1}U_n\ud x~ t_n +\nu\alpha \int_{\R^N}u_{0}^{\alpha-1}v_0^\beta U_n\ud x ~t_n\right]\\
&+\left[-\lambda_2 \int_{\R^N}v_0U_n \ud x~ \sqrt{\frac{\beta}{\alpha}}t_n+\mu_2\int_{\R^N}v_{0}^{q-1}U_n \ud x ~ \sqrt{\frac{\beta}{\alpha}}t_n+\nu\beta \int_{\R^N}u_{0}^{\alpha}v_{0}^{\beta-1} U_n\ud x ~ \sqrt{\frac{\beta}{\alpha}}t_n\right]\\
&-\frac{\mu_1}{p}\left[\|u_0+t_nU_n\|_p^p-\|u_0\|_p^p\right]-\frac{\mu_2}{q}\left[\|v_0+\sqrt{\frac{\beta}{\alpha}}t_nU_n\|_q^q-\|v_0\|_q^q\right]\\
&-\nu\int_{\R^N}\left[|u_0+t_nU_n|^\alpha |v_0+\sqrt{\frac{\beta}{\alpha}}t_nU_n|^\beta-u_0^\alpha v_0^\beta\right]\ud x\\
&-\ell_1 \frac{1}{n^{\frac{N-2}{2}}} \left[\|\nabla u_0\|_2^2-\mu_1\|u_0\|_p^p-\nu\alpha\int_{\R^N}u_0^\alpha v_0^\beta \ud x +S^{\frac{N}{2}}[(t^*)^2-\nu \alpha (\frac{\beta}{\alpha})^{\frac{\beta}{2}} (t^*)^{2^*}]\right]\\
&-\ell_2\frac{1}{n^{\frac{N-2}{2}}}\left[\|\nabla v_0\|_2^2-\mu_2\|v_0\|_q^q-\nu\beta\int_{\R^N}u_0^\alpha v_0^\beta \ud x +S^{\frac{N}{2}}[\frac{\beta}{\alpha}(t^*)^2-\nu\beta (\frac{\beta}{\alpha})^{\frac{\beta}{2}} (t^*)^{2^*}]\right]\\
&+o\left(\frac{1}{n^{\frac{N-2}{2}}}\right)\\
=&m_\nu(a_1,b_1)+o\left(\frac{1}{n^{\frac{N-2}{2}}}\right)\\
&+\frac{N}{(N-2)\alpha}\|\nabla U_n\|_2^2t_n^2-\nu (\frac{\beta}{\alpha})^{\frac{\beta}{2}} \|U_n\|_{2^*}^{2^*} t_{n}^{2^*}\\
&-\frac{\mu_1}{p}\int_{\R^N}\left[(u_0+t_nU_n)^p-u_0^p-pu_{0}^{p-1}t_nU_n\right]\ud x\\
&-\frac{\mu_2}{q}\int_{\R^N}\left[(v_0+\sqrt{\frac{\beta}{\alpha}}t_nU_n)^q-v_0^q-qv_{0}^{q-1}\sqrt{\frac{\beta}{\alpha}}t_nU_n\right]\ud x\\
&-\lambda_1 \int_{\R^N}u_0U_n \ud x ~t_n-\lambda_2 \int_{\R^N}v_0U_n \ud x~ \sqrt{\frac{\beta}{\alpha}}t_n\\
&-\nu\int_{\R^N}\left[(u_0+t_nU_n)^\alpha (v_0+\sqrt{\frac{\beta}{\alpha}}t_nU_n)^\beta -u_0^\alpha v_0^\beta -\alpha u_{0}^{\alpha-1}v_0^\beta U_n~t_n\right.\\
&\quad\quad \quad \quad \quad\left.-\beta u_{0}^{\alpha}v_{0}^{\beta-1} \sqrt{\frac{\beta}{\alpha}}t_nU_n -(\frac{\beta}{\alpha})^{\frac{\beta}{2}}U_{n}^{2^*}t_{n}^{2^*}\right]\ud x\\
&-\ell_1 \frac{1}{n^{\frac{N-2}{2}}}\left[-\lambda_1 a_1+S^{\frac{N}{2}}[(t^*)^2-\nu\alpha (\frac{\beta}{\alpha})^{\frac{\beta}{2}} (t^*)^{2^*}]\right]\\
&-\ell_2\frac{1}{n^{\frac{N-2}{2}}}\left[-\lambda_2 b_1+S^{\frac{N}{2}}[\frac{\beta}{\alpha}(t^*)^2-\nu\beta (\frac{\beta}{\alpha})^{\frac{\beta}{2}} (t^*)^{2^*}]\right],
\end{align*}
where we have used that
\beq\lab{eq:20240225-ze1}
\|\nabla u_0\|_2^2-\mu_1\|u_0\|_p^p-\nu\alpha\int_{\R^N}u_0^\alpha v_0^\beta \ud x=-\lambda_1\|u_0\|_2^2=-\lambda_1 a_1
\eeq
and
\beq\lab{eq:20240225-ze2}
\|\nabla v_0\|_2^2-\mu_2\|v_0\|_q^q-\nu\beta\int_{\R^N}u_0^\alpha v_0^\beta \ud x=-\lambda_2 \|v_0\|_2^2=-\lambda_2 b_1.
\eeq

Using the property that $(1+x)^{{\sigma}}\geq 1+{\sigma}x+\frac{{\sigma}({\sigma}-1)}{2}x^2, \forall {\sigma}>2, x\geq 0$, we conclude that
\[
\frac{\mu_1}{p}\int_{\R^N}\left[|u_0+t_nU_n|^p-|u_0|^p-p u_{0}^{p-1}U_n~t_n\right]\ud x \geq \frac{(p-1)\mu_1}{2}\int_{\R^N} u_{0}^{p-2}U_n^2 t_n^2\ud x
\]
and
\[
\frac{\mu_2}{q}\int_{\R^N}\left[|v_0+\sqrt{\frac{\beta}{\alpha}}t_n U_n|^q-|v_0|^q-q v_{0}^{q-1}\sqrt{\frac{\beta}{\alpha}}t_nU_n~ \right]\ud x\geq \frac{(q-1)\mu_2}{2}\int_{\R^N}v_{0}^{q-2}U_n^2 \frac{\beta}{\alpha}t_n^2 \ud x.
\]
By Corollary \ref{cro:20240228-c1}, we have that
\[
\begin{aligned}
&\nu\int_{\R^N}\left[|u_0+t_nU_n|^\alpha |v_0+\sqrt{\frac{\beta}{\alpha}}t_n U_n|^\beta-u_0^\alpha v_0^\beta -\alpha u_{0}^{\alpha-1}v_0^\beta U_n~t_n \right.\\
&\quad\quad\quad\quad\quad\quad\left.-\beta u_{0}^{\alpha}v_{0}^{\beta-1} \sqrt{\frac{\beta}{\alpha}}t_nU_n -(\frac{\beta}{\alpha})^{\frac{\beta}{2}}  t_{n}^{2^*} |U_n|^{2^*}\right]\ud x\\
\geq&  \nu A_1 (\sqrt{\frac{\beta}{\alpha}})^{\beta}t_{n}^{2^*-1} \|U_{n}\|_{2^*-1}^{2^*-1}-\nu A_2 (\sqrt{\frac{\beta}{\alpha}})^{\beta}t_n^2 \|U_n\|_2^2\\
=&O\left(\frac{1}{n^{\frac{N-2}{2}}}\right),
\end{aligned}
\]
here we have used that $\|U_{n}\|_{2^*-1}^{2^*-1}=O\left(\frac{1}{n^{\frac{N-2}{2}}}\right)$, $\|U_n\|_2^2=o\left(\frac{1}{n^{\frac{N-2}{2}}}\right)$ and $t_n=O(1)$.

We also recall {\eqref{0427_3}, \eqref{0427_4},} \eqref{eq:20240224-e9} and \eqref{eq:20240224-e10}, then we have that
\begin{align*}
H_n(t_n)\leq&m_\nu(a_1,b_1)-O\left(\frac{1}{n^{\frac{N-2}{2}}}\right)+S^{\frac{N}{2}}\left[\frac{N}{(N-2)\alpha}t_n^2-\nu (\frac{\beta}{\alpha})^{\frac{\beta}{2}}  t_{n}^{2^*}\right]\\
&+\left(\alpha\ell_1+\beta\ell_2\right)S^{\frac{N}{2}} \frac{1}{n^{\frac{N-2}{2}}}\left[\frac{1}{\alpha}(t^*)^2-\nu  (\frac{\beta}{\alpha})^{\frac{\beta}{2}} (t^*)^{2^*}\right].
\end{align*}

Hence, by  Lemma \ref{lemma:20240225-zl1} {-\ref{lemma:20240225-zl1-p1}}, we see that $\frac{1}{\alpha}(t^*)^2-\nu  (\frac{\beta}{\alpha})^{\frac{\beta}{2}} (t^*)^{2^*}=0$. So, combining with Lemma \ref{lemma:20240225-zl1}-\ref{lemma:20240225-zl1-p2},
\[
\begin{aligned}
H_n(t_n)\leq&m_\nu(a_1,b_1)+S^{\frac{N}{2}}\left[\frac{N}{(N-2)\alpha}t_n^2-\nu (\frac{\beta}{\alpha})^{\frac{\beta}{2}}  t_{n}^{2^*}\right]-O\left(\frac{1}{n^{\frac{N-2}{2}}}\right)\\
\leq &m_\nu(a_1,b_1)+\frac{2}{N-2} \nu^{-\frac{N-2}{2}} \alpha^{-\frac{(N-2)\alpha}{4}} \beta^{-\frac{(N-2)\beta}{4}} S^{\frac{N}{2}}-O\left(\frac{1}{n^{\frac{N-2}{2}}}\right)\\
<&m_\nu(a_1,b_1)+\frac{2}{N-2} \nu^{-\frac{N-2}{2}} \alpha^{-\frac{(N-2)\alpha}{4}} \beta^{-\frac{(N-2)\beta}{4}} S^{\frac{N}{2}}
\end{aligned}
\]
provided $n$ large enough.

\subsubsection{\bf Proof of Proposition \ref{prop:20240228-p1} for the case of $N\geq 6$}
In such a case, we shall see that the leading term of error is $O\left(\frac{1}{n^{N-\frac{N-2}{2}\max\{p,q\}}}\right)$, which is much easier than the case of $3\leq N\leq 5$.
Recalling \eqref{eq:20240224-e5} and \eqref{eq:20240224-e6}, we have that
\begin{align*}
&H_n(t_n)\\
=&\frac{1}{2}\left[1-2\ell_1\frac{1}{n^{2}}+o\left(\frac{1}{n^{2}}\right)\right]\left[\|\nabla u_0\|_2^2+\|\nabla U_n\|_2^2t_n^2+2\int_{\R^N}\nabla u_0\nabla U_n \ud x t_n\right]\\
&+\frac{1}{2}\left[1-2\ell_2\frac{1}{n^{2}}+o\left(\frac{1}{n^{2}}\right)\right]\left[\|\nabla v_0\|_2^2+\|\nabla U_n\|_2^2\frac{\beta}{\alpha}t_n^2+2\int_{\R^N}\nabla v_0\nabla U_n \ud x \sqrt{\frac{\beta}{\alpha}}t_n\right]\\
&-\frac{\mu_1}{p}\left[1-p\ell_1\frac{1}{n^{2}}+o\left(\frac{1}{n^{2}}\right)\right]\|u_0+t_nU_n\|_p^p\\
&-\frac{\mu_2}{q}\left[1-q\ell_2\frac{1}{n^{2}}+o\left(\frac{1}{n^{2}}\right)\right]\|v_0+\sqrt{\frac{\beta}{\alpha}}t_nU_n\|_q^q\\
&-\nu\left[1-\alpha\ell_1\frac{1}{n^{2}}+o\left(\frac{1}{n^{2}}\right)\right] \left[1-\beta\ell_2\frac{1}{n^{2}}+o\left(\frac{1}{n^{2}}\right)\right]\\
&~\cdot \int_{\R^N}|u_0+t_nU_n|^\alpha |v_0+\sqrt{\frac{\beta}{\alpha}}t_nU_n|^\beta \ud x\\
=&\frac{1}{2}\left[\|\nabla u_0\|_2^2+\|\nabla U_n\|_2^2t_n^2+2\int_{\R^N}\nabla u_0\nabla U_n \ud x t_n\right]\\
&+\frac{1}{2}\left[\|\nabla v_0\|_2^2+\|\nabla U_n\|_2^2\frac{\beta}{\alpha}t_n^2+2\int_{\R^N}\nabla v_0\nabla U_n \ud x \sqrt{\frac{\beta}{\alpha}}t_n\right]\\
&-\frac{\mu_1}{p}\|u_0+t_nU_n\|_p^p-\frac{\mu_2}{q}\|v_0+\sqrt{\frac{\beta}{\alpha}}t_nU_n\|_q^q-\nu\int_{\R^N}|u_0+t_nU_n|^\alpha |v_0+\sqrt{\frac{\beta}{\alpha}}t_nU_n|^\beta \ud x\\
&-\ell_1\frac{1}{n^{2}}\left[\|\nabla u_0\|_2^2+S^{\frac{N}{2}}(t^*)^2\right]-\ell_2\frac{1}{n^{2}}\left[\|\nabla v_0\|_2^2+S^{\frac{N}{2}}\frac{\beta}{\alpha}(t^*)^2\right]\\
&+\mu_1\ell_1\frac{1}{n^{2}}\|u_0\|_p^p+\mu_2\ell_2\frac{1}{n^{2}}\|v_0\|_q^q\\
&+\nu[\alpha\ell_1+\beta\ell_2]\left[\int_{\R^N}u_0^\alpha v_0^\beta \ud x +S^{\frac{N}{2}}(\frac{\beta}{\alpha})^{\frac{\beta}{2}} (t^*)^{2^*}\right]{\frac{1}{n^2}}+o\left(\frac{1}{n^{2}}\right).
\end{align*}
Since $(u_0,v_0)$ is a ground state solution to \eqref{eq:20240217-xe1}, we have that $J(u_0,v_0)=m_\nu(a_1,b_1)$. {Recalling \eqref{eq:20240220-xe4}, \eqref{eq:20240220-xe5}, \eqref{eq:20240225-ze1}, \eqref{eq:20240225-ze2} and Lemma \ref{lemma:20240225-zl1}, combining with the facts
\[
\begin{aligned}
&\lambda_1\geq 0, ~ \int_{\R^N}u_0U_n \ud x ~t_n=O\left(\frac{1}{n^{\frac{N-2}{2}}}\right)=\begin{cases} o(\frac{1}{n^2})~\hbox{if}~N\geq 7,\\
O(\frac{1}{n^2})~\hbox{if}~N=6.
\end{cases}\\
&\lambda_2\geq 0,~ \int_{\R^N}v_0U_n \ud x~ \sqrt{\frac{\beta}{\alpha}}t_n=O\left(\frac{1}{n^{\frac{N-2}{2}}}\right)=\begin{cases} o(\frac{1}{n^2})~\hbox{if}~N\geq 7,\\
O(\frac{1}{n^2})~\hbox{if}~N=6.
\end{cases}\\
&\lambda_1+\lambda_2>0 ~\hbox{(by Theorem \ref{th:sign-multipliers})},
\end{aligned}
\] }
we arrive at
\begin{align*}
&H_n(t_n)\\
=&\left[\frac{1}{2}\|\nabla u_0\|_2^2+\frac{1}{2}{\|\nabla v_0\|_2^2}-\frac{\mu_1}{p}\|u_0\|_p^p-\frac{\mu_2}{q}\|v_0\|_q^q-\nu \int_{\R^N}u_0^\alpha v_0^\beta \ud x\right]\\
&+\frac{1}{2}\|\nabla U_n\|_2^2t_n^2+\frac{1}{2}\|\nabla U_n\|_2^2\frac{\beta}{\alpha}t_n^2\\
&+\left[-\lambda_1 \int_{\R^N}u_0U_n \ud x ~t_n+\mu_1\int_{\R^N}u_{0}^{p-1}U_n\ud x~ t_n +\nu\alpha \int_{\R^N}u_{0}^{\alpha-1}v_0^\beta U_n\ud x ~t_n\right]\\
&+\left[-\lambda_2 \int_{\R^N}v_0U_n \ud x~ \sqrt{\frac{\beta}{\alpha}}t_n+\mu_2\int_{\R^N}v_{0}^{q-1}U_n \ud x ~ \sqrt{\frac{\beta}{\alpha}}t_n+\nu\beta \int_{\R^N}u_{0}^{\alpha}v_{0}^{\beta-1} U_n\ud x ~ \sqrt{\frac{\beta}{\alpha}}t_n\right]\\
&-\frac{\mu_1}{p}\left[\|u_0+t_nU_n\|_p^p-\|u_0\|_p^p\right]-\frac{\mu_2}{q}\left[\|v_0+\sqrt{\frac{\beta}{\alpha}}t_nU_n\|_q^q-\|v_0\|_q^q\right]\\
&-\nu\int_{\R^N}\left[|u_0+t_nU_n|^\alpha |v_0+\sqrt{\frac{\beta}{\alpha}}t_nU_n|^\beta-u_0^\alpha v_0^\beta\right]\ud x\\
&-\ell_1 \frac{1}{n^{2}} \left[\|\nabla u_0\|_2^2-\mu_1\|u_0\|_p^p-\nu\alpha\int_{\R^N}u_0^\alpha v_0^\beta \ud x +S^{\frac{N}{2}}[(t^*)^2-\nu \alpha (\frac{\beta}{\alpha})^{\frac{\beta}{2}} (t^*)^{2^*}]\right]\\
&-\ell_2\frac{1}{n^{2}}\left[\|\nabla v_0\|_2^2-\mu_2\|v_0\|_q^q-\nu\beta\int_{\R^N}u_0^\alpha v_0^\beta \ud x +S^{\frac{N}{2}}[\frac{\beta}{\alpha}(t^*)^2-\nu\beta (\frac{\beta}{\alpha})^{\frac{\beta}{2}} (t^*)^{2^*}]\right]\\
&+o\left(\frac{1}{n^{2}}\right)\\
=&m_\nu(a_1,b_1)+o\left(\frac{1}{n^{2}}\right){-O(\frac{1}{n^{\frac{N-2}{2}}})}\\
&+\frac{N}{(N-2)\alpha}\|\nabla U_n\|_2^2t_n^2-\nu (\frac{\beta}{\alpha})^{\frac{\beta}{2}} \|U_n\|_{2^*}^{2^*} t_{n}^{2^*}\\
&-\frac{\mu_1}{p}\int_{\R^N}\left[(u_0+t_nU_n)^p-u_0^p-pu_{0}^{p-1}t_nU_n\right]\ud x\\
&-\frac{\mu_2}{q}\int_{\R^N}\left[(v_0+\sqrt{\frac{\beta}{\alpha}}t_nU_n)^q-v_0^q-qv_{0}^{q-1}\sqrt{\frac{\beta}{\alpha}}t_nU_n\right]\ud x\\
&-\nu\int_{\R^N}\left[(u_0+t_nU_n)^\alpha (v_0+\sqrt{\frac{\beta}{\alpha}}t_nU_n)^\beta -u_0^\alpha v_0^\beta -\alpha u_{0}^{\alpha-1}v_0^\beta U_n~t_n\right.\\
&\quad\quad \quad \quad \quad\left.-\beta u_{0}^{\alpha}v_{0}^{\beta-1} \sqrt{\frac{\beta}{\alpha}}t_nU_n -(\frac{\beta}{\alpha})^{\frac{\beta}{2}}U_{n}^{2^*}t_{n}^{2^*}\right]\ud x\\
&+\ell_1 \frac{1}{n^{2}}\left[\lambda_1 a_1\right]+\ell_2\frac{1}{n^{2}}\left[\lambda_2 b_1\right].
\end{align*}

By Lemma \ref{lemma:20240220-xl1}, we have that
\[
\begin{aligned}
&\nu\int_{\R^N}\left[|u_0+t_nU_n|^\alpha |v_0+\sqrt{\frac{\beta}{\alpha}}t_n U_n|^\beta-u_0^\alpha v_0^\beta -\alpha u_{0}^{\alpha-1}v_0^\beta U_n~t_n \right.\\
&\quad\quad\quad\quad\quad\quad\left.-\beta u_{0}^{\alpha}v_{0}^{\beta-1} \sqrt{\frac{\beta}{\alpha}}t_nU_n -(\frac{\beta}{\alpha})^{\frac{\beta}{2}}  t_{n}^{2^*} |U_n|^{2^*}\right]\ud x\geq 0.
\end{aligned}
\]
By {\eqref{eq:20240228-xe1} and} Corollary \ref{cro:20240228-hhbc1}, we have that
\[
\begin{aligned}
&\frac{\mu_1}{p}\int_{\R^N}\left[(u_0+t_nU_n)^p-u_0^p-pu_{0}^{p-1}t_nU_n\right]\ud x\\
\geq &A\frac{\mu_1}{p} t_n^p \|U_n\|_p^p =O\left(\frac{1}{n^{N-\frac{(N-2)}{2}p}}\right).
\end{aligned}
\]
Similarly, we have that
\[
\begin{aligned}
&\frac{\mu_2}{q}\int_{\R^N}\left[(v_0+\sqrt{\frac{\beta}{\alpha}}t_nU_n)^q-v_0^q-qv_{0}^{q-1}\sqrt{\frac{\beta}{\alpha}}t_nU_n\right]\ud x\\
\geq&O\left(\frac{1}{n^{N-\frac{(N-2)}{2}q}}\right).
\end{aligned}
\]
Hence, {by \eqref{0427_3}, \eqref{0427_4} and Lemma \ref{lemma:20240225-zl1}-(ii)}, we arrive at
\begin{align*}
H_n(t_n)\leq& m_\nu(a_1,b_1)+S^{\frac{N}{2}}\left[\frac{N}{(N-2)\alpha}t_n^2-\nu (\frac{\beta}{\alpha})^{\frac{\beta}{2}} t_{n}^{2^*}\right]\\
&~+ O(\frac{1}{n^2})-O\left(\frac{1}{n^{N-\frac{(N-2)}{2}p}}\right)-O\left(\frac{1}{n^{N-\frac{(N-2)}{2}q}}\right)\\
\leq&m_\nu(a_1,b_1)+\frac{2}{N-2} \nu^{-\frac{N-2}{2}} \alpha^{-\frac{(N-2)\alpha}{4}} \beta^{-\frac{(N-2)\beta}{4}} S^{\frac{N}{2}}-O\left(\frac{1}{n^{N-\frac{(N-2)}{2}\max\{p,q\}}}\right)\\
<&m_\nu(a_1,b_1)+\frac{2}{N-2} \nu^{-\frac{N-2}{2}} \alpha^{-\frac{(N-2)\alpha}{4}} \beta^{-\frac{(N-2)\beta}{4}} S^{\frac{N}{2}}
\end{align*}
provided $n$ large enough, where we have {also} used the fact $O(\frac{1}{n^2})=o\left(\frac{1}{n^{N-\frac{(N-2)}{2}\max\{p,q\}}}\right)$ due to $p,q>2$ (see Remark \ref{remark:20240228-xr1}).

\subsection{Concentration analysis and compactness}
\subsubsection{$D_{0}^{1,2}$-compactness}
Suppose that $\{(u_n,v_n)\}\subset T_{rad}(a,b)$ is a nonnegative $(PSP)_{M_\nu(a,b)}$ sequence for $J\big|_{T_{rad}(a_1,b_1)}$ (see Remark \ref{remark:20240215-r1}). That is, $0\leq u_n\in H_{\mathrm{rad}}^{1}(\R^N), 0\leq v_n\in H_{\mathrm{rad}}^{1}(\R^N), \|u_n\|_2^2=a_1, \|v_n\|_2^2=b_1$ and there exists sequences $\{\lambda_{1,n}\}, \{\lambda_{2,n}\}$ such that
\beq\lab{eq:20240215-e2}
\begin{cases}
-\Delta u_n+\lambda_{1,n}u_n=\mu_1 u_{n}^{p-1}+\nu \alpha u_{n}^{\alpha-1}v_{n}^{\beta}+o_n(1)~\hbox{in}~H^{-1}(\R^N),\\
-\Delta v_n+\lambda_{2,n}v_n=\mu_2 v_{n}^{q-1}+\nu\beta u_{n}^{\alpha}v_{n}^{\beta-1}+o_n(1)~\hbox{in}~H^{-1}(\R^N),
\end{cases}
\eeq
and
\[
P(u_n,v_n)=o_n(1).
\]

\bl\lab{lemma:20240215-wl1}
$\{(u_n,v_n)\}$ are bounded in $H^1(\R^N,\R^2)$. Consequently, $\{\lambda_{1,n}\},\{\lambda_{2,n}\}$ are bounded in $\R$.
\el
\bp
~Noting that $J(u_n,v_n)-\frac{1}{2^*}P(u_n,v_n)=M_\nu(a_1,b_1)+o_n(1)$, we conclude that $\{\|\nabla u_n\|_2^2+\|\nabla v_n\|_2^2\}$ {is} bounded and thus $\{(u_n,v_n)\}$ {is} bounded in $H^1(\R^N,\R^2)$. Then combining with \eqref{eq:20240215-e2}, it is also easy to see that $\{\lambda_{1,n}\}, \{\lambda_{2,n}\}$ are bounded sequences in $\R$.
\ep

\bl\lab{lemma:20240215-wl2}
For the sequences $\{|u_n|^{2^*}\},\{|v_n|^{2^*}\}, \{|u_n|^\alpha |v_n|^\beta\}, \{|\nabla u_n|^2\}, \{|\nabla v_n|^2\}$, if one of them concentrates at some point $x_0\in \R^N\cup \{\infty\}$, then all the others concentrate at the same point.
\el
\bp
~{Similar to the proof of Lemma \ref{lemma:20251011-1545}.}
\ep

\br\lab{remark:20240215-wr2}
By a similar argument in Subsection \ref{subsec:proof-th1}, we can prove that:  the energy will contribute at least $\frac{2}{N-2} \nu^{-\frac{N-2}{2}} \alpha^{-\frac{(N-2)\alpha}{4}} \beta^{-\frac{(N-2)\beta}{4}} S^{\frac{N}{2}}$ for each concentration point (see also Remark \ref{remark:20231207-r1}).
Since $\{(u_n,v_n)\}$ are radial functions and $J(u_n,v_n)\rightarrow M_\nu(a_1,b_1)$, one can see that the concentration {can only} happen at $0$.
\er

\bc\lab{cro:20240215-c1}
If $0< M_\nu(a_1,b_1)<m_\nu(a_1,b_1)+\frac{2}{N-2} \nu^{-\frac{N-2}{2}} \alpha^{-\frac{(N-2)\alpha}{4}} \beta^{-\frac{(N-2)\beta}{4}} S^{\frac{N}{2}}$, then there exists $(\bar{u},\bar{v})\in H_{\mathrm{rad}}^{1}(\R^N,\R^2)$ such that, going to a subsequence if necessary, $(u_n,v_n)\rightarrow (\bar{u},\bar{v})$ in $D_{0}^{1,2}(\R^N,\R^2)$. Furthermore, $\bar{u}\neq 0, \bar{v}\neq 0$.
\ec
\bp
~By Lemma \ref{lemma:20240215-wl1}, $\{(u_n,v_n)\}$ are bounded in $H^1(\R^N,\R^2)$. Up to a subsequence, we assume that $(u_n,v_n)\rightharpoonup (\bar{u},\bar{v})$ in $H^1(\R^N,\R^2)$.

We claim that $(\bar{u}, \bar{v})\neq (0,0)$. If not, by $M_\nu(a_1,b_1)\neq 0$, we see that $(u_n,v_n)\not\rightarrow (0,0)$ in $H^1(\R^N,\R^2)$. Since, $\{u_n\},\{v_n\}$ are radial functions, we have that $u_n\rightarrow 0$ in $L^p(\R^N)$, $v_n\rightarrow 0$ in $L^q(\R^N)$.\\
{\bf Case 1:} If {$\{|u_n|^{2^*}\}, \{|v_n|^{2^*}\}$} do not concentrate, then by the well known {Strauss} lemma, one can see that $(u_n,v_n)\rightarrow (0,0)$ strongly in $L^{2^*}(\R^N)\times L^{2^*}(\R^N)$. {Indeed, analogous to equation \eqref{eq:20231206-wbe2}, we define the weak convergence measures:
$$|u_n-\bar{u}|^{2^*}\rightharpoonup \eta_1, |v_n-\bar{v}|^{2^*}\rightharpoonup \eta_2~\hbox{in}~\mathcal{M}(\R^N).$$
In the absence of concentration, it follows that $\eta_1 = \eta_2 = 0$. Since $(\bar{u}, \bar{v}) = (0, 0)$, equation \eqref{eq:20231206-wbe6} implies
$\|u_n\|_{2^*}^{2^*} \rightarrow \|\bar{u}\|_{2^*}^{2^*} + \|\eta_1\| = 0,$
and similarly,
$\|v_n\|_{2^*}^{2^*} \rightarrow 0$.}
So, by $P(u_n,v_n)\rightarrow 0$, we conclude that $(u_n,v_n)\rightarrow (0,0)$ in $D_{0}^{1,2}(\R^N)\times D_{0}^{1,2}(\R^N)$ and $M_\nu(a,b)=J(u_n,v_n)+o_n(1)=o_n(1)$, a contradiction.\\
{\bf Case 2:} If {$\{|u_n|^{2^*}\}, \{|v_n|^{2^*}\}$}  concentrate,
then by Br\'ezis-Lieb's Lemma and Remark \ref{remark:20240215-wr2}, we conclude that
\begin{align*}
\liminf_{n\rightarrow \infty}J(u_n,v_n)\geq &\frac{2}{N-2} \nu^{-\frac{N-2}{2}} \alpha^{-\frac{(N-2)\alpha}{4}} \beta^{-\frac{(N-2)\beta}{4}} S^{\frac{N}{2}}\\
>&m_\nu(a_1,b_1)+\frac{2}{N-2} \nu^{-\frac{N-2}{2}} \alpha^{-\frac{(N-2)\alpha}{4}} \beta^{-\frac{(N-2)\beta}{4}} S^{\frac{N}{2}},
\end{align*}
which is also a contradiction.\\
Hence, $(\bar{u}, \bar{v})\neq (0,0)$.

Denote $\bar{a}:=\|\bar{u}\|_2^2, \bar{b}:=\|\bar{v}\|_2^2$, then $\bar{a}\leq a_1, \bar{b}\leq b_1$ and $(\bar{a}, \bar{b})\neq (0,0)$. Then by Remark \ref{remark:20231206-r2}, we have that
\beq\lab{eq:20240215-wbe1}
m_\nu(\bar{a},\bar{b})\geq m_\nu(a_1,b_1).
\eeq
By Theorem \ref{th:20240209-t1}, we have that
\beq\lab{eq:20240215-wbe2}
J(\bar{u},\bar{v})\geq m_\nu(\bar{a},\bar{b}).
\eeq
If $(u_n-\bar{u}, v_n-\bar{v})\not\rightarrow (0,0)$ in $D_{0}^{1,2}(\R^N,\R^2)$, then the concentration happens, and thus
\beq\lab{eq:20240215-wbe3}
J(u_n-\bar{u}, v_n-\bar{v})\geq \frac{2}{N-2} \nu^{-\frac{N-2}{2}} \alpha^{-\frac{(N-2)\alpha}{4}} \beta^{-\frac{(N-2)\beta}{4}} S^{\frac{N}{2}}+o_n(1).
\eeq
So, by Br\'ezis-Lieb's Lemma again, combining with \eqref{eq:20240215-wbe1},\eqref{eq:20240215-wbe2} and \eqref{eq:20240215-wbe3}, we obtain that
\begin{align*}
M_\nu(a_1,b_1)=&J(u_n,v_n)+o_n(1)\\
=&J(\bar{u},\bar{v})+J(u_n-\bar{u}, v_n-\bar{v})+o_n(1)\\
\geq&m_\nu(\bar{a},\bar{b})+\frac{2}{N-2} \nu^{-\frac{N-2}{2}} \alpha^{-\frac{(N-2)\alpha}{4}} \beta^{-\frac{(N-2)\beta}{4}} S^{\frac{N}{2}}+o_n(1)\\
\geq&m_\nu(a_1,b_1)+\frac{2}{N-2} \nu^{-\frac{N-2}{2}} \alpha^{-\frac{(N-2)\alpha}{4}} \beta^{-\frac{(N-2)\beta}{4}} S^{\frac{N}{2}}+o_n(1),
\end{align*}
which is a contradiction again. Hence, we prove that $(u_n,v_n)\rightarrow (\bar{u},\bar{v})$ in $D_{0}^{1,2}(\R^N,\R^2)$.

If $\bar{u}=0$, then $\bar{v}\neq 0$, which is the unique radial positive solution to
\[
-\Delta v+\lambda v=\mu_2 v^{q-1}~\hbox{in}~\R^N, ~~\|v\|_2^2=\bar{b}.
\]
So $$M_\nu(a_1,b_1)=\lim_{n\rightarrow \infty}J(u_n,v_n)=J(\bar{u},\bar{v})=J(0,\bar{v})<0,$$
a contradiction  and thus $\bar{u}\neq 0$.
Similarly, we can also prove that $\bar{v}\neq 0$.
\ep

\subsubsection{$L^2$-compactness}

Basing on Lemma \ref{lemma:20240215-wl1}, up to a subsequence, we can assume that $\bar{\lambda}_1=\lim_{n\rightarrow \infty}\lambda_{1,n}, \bar{\lambda}_2=\lim_{n\rightarrow \infty}\lambda_{2,n}$. Then $(\bar{u},\bar{v})$ solves
\beq\lab{eq:20240216-wbe2}
\begin{cases}
-\Delta \bar{u}+\bar{\lambda}_1\bar{u}=\mu_1 \bar{u}^{p-1}+\nu \alpha \bar{u}^{\alpha-1}v_{n}^{\beta} ~\hbox{in}~\R^N,\\
-\Delta \bar{v}+\bar{\lambda}_2\bar{v}=\mu_2 \bar{v}^{q-1}+\nu\beta \bar{u}^{\alpha}\bar{v}^{\beta-1}~\hbox{in}~\R^N.
\end{cases}
\eeq
Then by the $D_{0}^{1,2}(\R^N)$-compactness established in Corollary \ref{cro:20240215-c1} and the radial compact embedding $H_{\mathrm{rad}}^{1}\hookrightarrow\hookrightarrow L^\eta(\R^N), \forall \eta\in (2,2^*),N\geq 2$, we have that
\begin{align*}
\bar{\lambda}_1 \|\bar{u}\|_2^2=&\mu_1\|\bar{u}\|_p^p +\nu \alpha \int_{\R^N}\bar{u}^\alpha \bar{v}^\beta \ud x -\|\nabla \bar{u}\|_2^2\\
=&\mu_1\|u_n\|_p^p+\nu \alpha \int_{\R^N}u_{n}^\alpha v_{n}^\beta \ud x -\|\nabla u_n\|_2^2 +o_n(1)\\
=&\lambda_{1,n}\|u_n\|_2^2+o_n(1)\|u_n\|_{H^1}+o_n(1)\\
=&\bar{\lambda}_1 a_1 +o_n(1),
\end{align*}
which implies that
\beq\lab{eq:20240216-wbe3}
\bar{\lambda}_1(\|\bar{u}\|_2^2-a_1)=0.
\eeq
Similarly, we can also prove that
\beq\lab{eq:20240216-wbe4}
\bar{\lambda}_2(\|\bar{v}\|_2^2-b_1)=0.
\eeq

We can have the following conclusion.
\bc\lab{cro:20240216-wbc1}
$(\|\bar{u}\|_2^2-a_1)(\|\bar{v}\|_2^2-b_1)=0$. Consequently, at least one of $u_n\rightarrow \bar{u}$ and $v_n\rightarrow \bar{v}$ in $L^2(\R^N)$ is true.
\ec
\bp
~Similar to the proof of Lemma \ref{lemma:20240219-l2}, one can show that
\beq\lab{eq:20240216-hlle1}
\bar{\lambda}_1\|\bar{u}\|_2^2+\bar{\lambda}_2\|\bar{v}\|_2^2>0
 \eeq
 and thus at least one of $\{\bar{\lambda}_1,\bar{\lambda}_2\}$ is nonzero. So, by \eqref{eq:20240216-wbe3} and \eqref{eq:20240216-wbe4}, we conclude that at least one of $\|\bar{u}\|_2^2-a_1$ and $\|\bar{v}\|_2^2-b_1$ is zero. Hence, $(\|\bar{u}\|_2^2-a_1)(\|\bar{v}\|_2^2-b_1)=0$  and thus  at least one of $u_n\rightarrow \bar{u}$ and $v_n\rightarrow \bar{v}$ in $L^2(\R^N)$ is true.
\ep

\bl\lab{lemma:20240216-wl1}
$(\bar{u}, \bar{v})\in T_{rad}(a_1,b_1)$ and $(u_n,v_n)\rightarrow (\bar{u}, \bar{v})$ in $L^2(\R^N,\R^2)$.
\el
\bp
~Denote $\bar{a}:=\|\bar{u}\|_2^2, \bar{b}:=\|\bar{v}\|_2^2$, then $\bar{a}\leq a_1, \bar{b}\leq b_1$. Furthermore, by Corollary \ref{cro:20240215-c1}, $\bar{u}\neq 0, \bar{v}\neq 0$, and thus $\bar{a}\neq 0, \bar{b}\neq 0$. By Corollary \ref{cro:20240216-wbc1}, at least one of $\bar{a}=a_1$ and $\bar{b}=b_1$ is true. So, without loss of generality, we may assume that $\bar{a}=a_1$, and we only need to prove that $\bar{b}=b_1$ is also true.

Recalling that $J(\bar{u}, \bar{v})=M_\nu(a_1,b_1)\geq k_0>0$ and $P(\bar{u}, \bar{v})=0$, by Corollary \ref{cro:20240216-c1}, we have that
\[
J(\bar{u}, \bar{v})=\max_{t>0}J(t\star \bar{u}, t\star \bar{v}).
\]
Let $t_1\in (0,1)$ small enough such that $\left(\|\nabla \bar{u}\|_2^2 +\|\nabla \bar{v}\|_2^2\right)t_1^2<\frac{1}{2}\bar{\rho}$ and $t_2>1$ large enough such that $J(t_2\star \bar{u}, t_2\star \bar{v})<2m_\nu(a_1,b_1)$.

Suppose that $\bar{b}<b_1$ and let $w>0$ be the unique positive radial normalized solution to
\[
\begin{cases}
-\Delta w+\lambda w=\mu_2 w^{q-1}~\hbox{in}~\R^N,\\
\|w\|_2^2=b_1-\bar{b}.
\end{cases}
\]
It is well known that
\[
J(0,w)=\min_{t>0}J(0, t\star w)<0,
\]
due to $q\in (2,2+\frac{4}{N})$.
We put $\phi:=\frac{1}{t_2}\star w$, then $\phi$ is a radial symmetric decreasing function with $J(0,t\star \phi)<0$ for all $t\in [t_1,t_2]$, {which follows from the fact that $t\mapsto J(t\star w)$ is decreasing in $(0,1)$ and $\lim_{t\rightarrow 0^+}J(t\star w)=0$}. In particular, we can choose $t_1>0$ small enough such that $\|\nabla \phi\|_2^2 t_1^2<\frac{1}{2}\bar{\rho}$.
Define
\[
\gamma_1(t):=[t_1+(t_2-t_1)t]\star \bar{u}, \gamma_2(t):=[t_1+(t_2-t_1)t]\star\{\bar{v},\phi\}^*,
\]
here $\{\bar{v},\phi\}^*$ is the rearrangement of $\bar{v},\phi$ defined by \eqref{eq:20231206-xe4}. Then by \ref{l-p-3} and \ref{l-p-4} in Lemma \ref{lemma:20231206-l2}, combining with \eqref{eq:20231206-xe5}, we see that $\gamma:=(\gamma_1,\gamma_2)\in C([0,1], T_{rad}(a_1,b_1))$ and
\begin{align*}
\|\nabla \gamma_1(t)\|_2^2+\|\nabla \gamma_2(t)\|_2^2=&\left(\|\nabla \bar{u}\|_2^2+\|\nabla \{\bar{v},\phi\}^*\|_2^2\right)[t_1+(t_2-t_1)t]^2\\
\leq&\left(\|\nabla \bar{u}\|_2^2+\|\nabla \bar{v}\|_2^2+\|\nabla \phi\|_2^2\right)[t_1+(t_2-t_1)t]^2.
\end{align*}
Thus $\|\nabla \gamma_1(0)\|_2^2+\|\nabla \gamma_2(0)\|_2^2<\bar{\rho}$ due to $\left(\|\nabla \bar{u}\|_2^2 +\|\nabla \bar{v}\|_2^2\right)t_1^2<\frac{1}{2}\bar{\rho}$ and $\|\nabla \phi\|_2^2 t_1^2<\frac{1}{2}\bar{\rho}$. Also
\begin{align*}
J(\gamma_1(t),\gamma_2(t))=&\frac{1}{2}(\|\nabla \gamma_1(t)\|_2^2+\|\nabla \gamma_2(t)\|_2^2)-\frac{\mu_1}{p}\|\gamma_1(t)\|_p^p -\frac{\mu_2}{q}\|\gamma_2(t)\|_q^q-\nu\int_{\R^N}|\gamma_1(t)|^\alpha |\gamma_2(t)|^\beta \ud x\\
=&\frac{1}{2}\left(\|\nabla \bar{u}\|_2^2+\|\nabla \{\bar{v},\phi\}^*\|_2^2\right)[t_1+(t_2-t_1)t]^2 \\
&-\frac{\mu_1}{p}\|\bar{u}\|_p^p [t_1+(t_2-t_1)t]^{\frac{(p-2)N}{2}} -\frac{\mu_2}{q}\|\{\bar{v},\phi\}^*\|_q^q [t_1+(t_2-t_1)t]^{\frac{(q-2)N}{2}}\\
&-\nu\int_{\R^N}\bar{u}^\alpha |\{\bar{v},\phi\}^*|^\beta \ud x [t_1+(t_2-t_1)t]^{2^*}\\
\leq&\frac{1}{2}\left(\|\nabla \bar{u}\|_2^2+\|\nabla \bar{v}\|_2^2+\|\nabla \phi\|_2^2\right)[t_1+(t_2-t_1)t]^2 \\
&-\frac{\mu_1}{p}\|\bar{u}\|_p^p [t_1+(t_2-t_1)t]^{\frac{(p-2)N}{2}} -\frac{\mu_2}{q}(\|\bar{v}\|_q^q +\|\phi\|_q^q)[t_1+(t_2-t_1)t]^{\frac{(q-2)N}{2}}\\
&-\nu\int_{\R^N}\bar{u}^\alpha \bar{v}^\beta \ud x [t_1+(t_2-t_1)t]^{2^*}\\
=&J([t_1+(t_2-t_1)t]\star \bar{u}, [t_1+(t_2-t_1)t]\star \bar{v}) +J(0, [t_1+(t_2-t_1)t]\star \phi).
\end{align*}
{Note} that
$$\max_{t\in [0,1]}J([t_1+(t_2-t_1)t]\star \bar{u}, [t_1+(t_2-t_1)t]\star \bar{v})=\max_{s\in [t_1,t_2]}J(s\star (\bar{u},\bar{v}))=J(\bar{u},\bar{v})=M_\nu(a_1,b_1)$$
and $J(0, [t_1+(t_2-t_1)t]\star \phi)<0$ for all $t\in [0,1]$, we obtain that
\beq\lab{eq:20240216-wbe8}
\max_{t\in [0,1]}J(\gamma(t))<M_\nu(a_1,b_1).
\eeq
On the other hand, $J(\gamma_1(1),\gamma_2(1))<J(t_2\star \bar{u},t_2\star \bar{v})<2m_\nu(a_1,b_1)$, we conclude that $\gamma\in \Gamma_{\nu}^{a_1,b_1}$. So, by the definition of $M_\nu(a_1,b_1)$, we have that
$M_\nu(a_1,b_1)\leq \max_{t\in [0,1]}J(\gamma(t))$, which is a contradiction to \eqref{eq:20240216-wbe8}. Hence, $\bar{b}=b_1$ is also true.

When $\bar{b}=b_1$, by a similar argument, we can also prove that $\bar{a}=a_1$.
Hence, by $(u_n,v_n)\rightharpoonup (\bar{u},\bar{v})$ in $L^2(\R^N,\R^2)$ and $(\|u_n\|_2^2, \|v_n\|_2^2)\equiv (a_1,b_1)=(\bar{a},\bar{b})=(\|\bar{u}\|_2^2,\|\bar{v}\|_2^2)$, the {uniform convexity} of $L^2(\R^N)$ indicates that $(u_n,v_n)\rightarrow (\bar{u},\bar{v})$ in $L^2(\R^N,\R^2)$.
\ep

\br\lab{remark:20240216-wbr1}
By the elliptic regularity result, one can see that $\bar{u},\bar{v}$ are smooth. And thus, the strong maximum principle implies that $\bar{u},\bar{v}$ are positive.
So, for the case of $N=3,4$, by the \cite[Lemma A.2]{Ikoma2014}, one can conclude that $\bar{\lambda}_1>0,\bar{\lambda}_2>0$. However, for the case of $N\geq 5$, the Liouville's argument cannot be applied. Up to now, we can only conclude from \eqref{eq:20240216-hlle1} that at least one of $\bar{\lambda}_1$ and $\bar{\lambda}_2$ is positive.
\er

\subsection{Proof of Theorem \ref{th:20240216-t1}}
By Remark \ref{remark:20240215-r1}, there exists a nonnegative $(PSP)_{M_\nu(a_1,b_1)}$ sequence for $J\big|_{T_{rad}(a_1,b_1)}$, i.e., $\{(u_n,v_n)\}\subset T_{rad}(a_1,b_1)$,
$$J(u_n,v_n)\rightarrow M_\nu(a_1,b_1), J\big|'_{T_{rad}(a_1,b_1)}\rightarrow 0, P(u_n,v_n)\rightarrow 0~\hbox{as}~n\rightarrow \infty.$$

Recalling Lemma \ref{lemma:20231207-l1}, we have that $M_\nu(a_1,b_1)>0$. On the other hand, by Proposition \ref{prop:20240228-p1} and Lemma \ref{lemma:20240206-xl1}, we have that
\[
M_\nu(a_1,b_1)<m_\nu(a_1,b_1)+\frac{2}{N-2} \nu^{-\frac{N-2}{2}} \alpha^{-\frac{(N-2)\alpha}{4}} \beta^{-\frac{(N-2)\beta}{4}} S^{\frac{N}{2}}.
\]
So, $0<M_\nu(a_1,b_1)<m_\nu(a_1,b_1)+\frac{2}{N-2} \nu^{-\frac{N-2}{2}} \alpha^{-\frac{(N-2)\alpha}{4}} \beta^{-\frac{(N-2)\beta}{4}} S^{\frac{N}{2}}$.
Then by Corollary \ref{cro:20240215-c1} and Lemma \ref{lemma:20240216-wl1}, we see that $\{(u_n,v_n)\}$ is compact in $H^1(\R^N,\R^2)$.
Recalling \eqref{eq:20240216-wbe2}, we finish the proof.\hfill$\Box$

\vskip 0.2in
\noindent
{\bf Declatations}\\
{\bf Conflict of interest} On behalf of all authors, the corresponding author states that there is no conflict of interest.

\noindent
{\bf Ethical Statement}\\
The manuscript has not been previously published, is not currently submitted for review to any other journal, and will not be submitted elsewhere before a decision is made by your journal.


\end{document}